\documentclass[generic,square]{imsart}

\RequirePackage[OT1]{fontenc}
\usepackage[table]{xcolor}
\RequirePackage{amsmath,amssymb,amsthm,mathrsfs, enumerate,bm,makeidx,xcolor,multirow,pbox,tikz}
\RequirePackage[colorlinks,linkcolor=red,citecolor=blue,urlcolor=blue]{hyperref}
\RequirePackage{natbib}
\usepackage{enumitem}

\RequirePackage{xr}

\numberwithin{equation}{section}

\renewcommand{\epsilon}{\varepsilon}

\DeclareMathOperator*{\argmin}{argmin}
\DeclareMathOperator*{\argmax}{argmax}

\newcommand{\beq}{\begin{equation}}
  \newcommand{\eeq}{\end{equation}}
\newcommand{\beqa}{\begin{equation} \begin{aligned}}
    \newcommand{\eeqa}{\end{aligned} \end{equation}}
\newcommand{\beqas}{\begin{equation*} \begin{aligned}}
    \newcommand{\eeqas}{\end{aligned} \end{equation*}}

\newcommand{\bit}{\begin{itemize}}
  \newcommand{\eit}{\end{itemize}}
\newcommand{\bmat}{\begin{bmatrix}}
  \newcommand{\emat}{\end{bmatrix}}

\makeindex

\theoremstyle{definition}\newtheorem{problem}{Problem}[section]
\theoremstyle{definition}
\theoremstyle{remark}\newtheorem{assumption}{Assumption}
\theoremstyle{remark}\newtheorem{remark}{Remark}[section]
\theoremstyle{definition}\newtheorem{example}[problem]{Example}
\theoremstyle{plain}\newtheorem{theorem}[problem]{Theorem}
\theoremstyle{plain}\newtheorem{lemma}[problem]{Lemma}
\theoremstyle{plain}\newtheorem{proposition}[problem]{Proposition}
\theoremstyle{plain}\newtheorem{corollary}[problem]{Corollary}
\theoremstyle{plain}

\newcommand{\DD}{{\mathbb D}}
\newcommand{\FF}{ {\mathbb F}}

\newcommand{\PP}{{\mathbb P}}
\newcommand{\RR}{{\mathbb R}}

\newcommand{\VV}{{\mathbb V}}
\newcommand{\XX}{{\mathbb X}}

\usepackage{xparse} %
\usepackage{etoolbox} %
\NewDocumentCommand\DDnTwo{O{}}{
  \ifstrempty{#1}{
    \mathbb{D}_{n, t_1, t_2}
  }{
    \mathbb{D}_{n, {#1}}
  }
}
\NewDocumentCommand\twoArgs{O{}}{
  \ifstrempty{#1}{
    t_1, t_2
  }{
    {#1}
  }
}
\NewDocumentCommand\oneArg{O{}}{
  \ifstrempty{#1}{
    t
  }{
    {#1}
  }
}

\global\long\def\RnTwo{R_{\xn,1, t_1,t_2}}

\usepackage{versions}
\usepackage{mathtools}

\global\long\def\FF{\mathbb{F}} %
\global\long\def\vvo{\vp_{0}}
\global\long\def\ffo{f_{0}} %
\global\long\def\FFo{F_{0}}
\newcommand{\lv}{\left\vert} %
  \newcommand{\rv}{\right\vert}
\newcommand{\lp}{\left(} %
  \newcommand{\rp}{\right)}
\newcommand{\lb}{\left\{} %
  \newcommand{\rb}{\right\}}

\global\long\def\vvna{\widehat{\vp}_{\xn}^{0}}
\global\long\def\vvnat#1{\widehat{\vp}_{\xn,#1}^{0}}
\global\long\def\FFn{\widehat{F}_{\xn}}
\global\long\def\ffn{\widehat{f}_{\xn}}
\global\long\def\ffna{\widehat{f}_{n}^{0}}
\global\long\def\FFna{\widehat{F}_{n}^{0}}
\global\long\def\vvn{\widehat{\vp}_{\xn}}
\global\long\def\vvnt#1{\widehat{\vp}_{\xn,#1}}
\global\long\def\vp{\varphi}
\global\long\def\xn{n}
\global\long\def\Fn{\mathbb{F}_{\xn}}

\excludeversion{mylongform}

\newenvironment{longform}{%
  \par \vspace{.2cm} \color{green} { \bf  Notes to self:}%
}{%
  $\blacktriangle$  \vspace{.1cm}
}

\begin{document}

\begin{frontmatter}
  \title{Inference for the mode of a log-concave density}  %
  \runtitle{Inference for the mode}

  \begin{aug}
    \author{\fnms{Charles R.}
      \snm{Doss}\thanksref{t1}\ead[label=e1]{cdoss@stat.umn.edu}\ead[label=u2,url]{http://users.stat.umn.edu/\textasciitilde cdoss/}}
    \and
    \author{\fnms{Jon A.} \snm{Wellner}\thanksref{t2}\ead[label=e2]{jaw@stat.washington.edu}}
    \ead[label=u1,url]{http://www.stat.washington.edu/jaw/}
    \thankstext{t1}{Supported in part by NSF Grant DMS-1104832  and a University of Minnesota Grant-In-Aid grant.}
    \thankstext{t2}{Supported in part by NSF Grants DMS-1104832 and DMS-1566514,  NI-AID grant 2R01 AI291968-04,  and by the Isaac Newton Institute for Mathematical Sciences program
{\sl Statistical Scalability},  EPSRC Grant Number LNAG/036 RG91310.}
    \runauthor{Doss and Wellner}
    \affiliation{University of Minnesota; \ University of Washington\thanksmark{m1}}
    \address{School of Statistics \\University of Minnesota\\Minneapolis,  MN 55455\\
      \printead{e1}\\
      \printead{u2}}
    \address{Department of Statistics, Box 354322\\University of Washington\\Seattle, WA  98195-4322\\
      \printead{e2}\\
      \printead{u1}}
  \end{aug}

  \begin{abstract}
    We study a likelihood ratio test for the location of the mode of a log-concave density.
    Our test is based on comparison of the log-likelihoods corresponding to the unconstrained
    maximum likelihood estimator of a log-concave density and the constrained maximum likelihood
    estimator where the constraint is that the mode of the density is fixed, say at $m$.
    The constrained estimation problem is studied in detail in
\cite{Doss-Wellner:2016ModeConstrained}.
    Here the results of that paper are used to
    show that,  under the null hypothesis (and strict curvature of $-\log f$ at the mode),
    the likelihood ratio statistic is asymptotically pivotal:  that is, it converges in distribution
    to a limiting distribution which is free of nuisance parameters, thus playing the role of the
    $\chi_1^2$ distribution in classical parametric statistical problems.
    By inverting this family of tests we obtain new (likelihood ratio based) confidence intervals
    for the mode of a log-concave density $f$.  These new intervals do not depend on any
    smoothing parameters.
    We study the new confidence intervals via Monte Carlo methods and illustrate them with
    two real data sets.
    The new intervals seem to have several advantages over existing procedures.
    Software implementing the test and confidence intervals is available in the R
    package \verb+logcondens.mode+.
  \end{abstract}

  \begin{keyword}[class=AMS]
    \kwd[Primary ]{62G07} %
    \kwd[; secondary ]{62G15} %
    \kwd{62G10} %
    \kwd{62G20} %
  \end{keyword}

  \begin{keyword}
    \kwd{mode}
    \kwd{empirical processes}
    \kwd{likelihood ratio}
    \kwd{pivot}
    \kwd{convex optimization}
    \kwd{log-concave}
    \kwd{shape constraints}
  \end{keyword}

\end{frontmatter}

\newpage

\tableofcontents

\newpage

\section{Introduction and overview:  inference for the mode}
\label{sec:Intro}

Let ${\cal P}$ denote the class of all log-concave densities $f$ on $\RR$.
It is well-known since Ibragimov (1956) that all log-concave densities $f$ are strongly unimodal,
and conversely;  see \cite{DJ1988UCA}  %
for an exposition of the basic theory.
Of course, ``the mode'' of a log-concave density $f$ may not be a single point.
It is, in general, the modal interval $MI (f) \equiv \{ x \in \RR : \ f(x) = \sup_{y \in \RR} f(y) \}$,
and to describe ``the mode'' completely we need to choose a specific element of
$MI(f)$, for example $M(f) \equiv \inf \{ x \in MI (f) \}$.
For a large sub-class of log-concave densities the set reduces to a single point.
Our focus here is on the latter case and, indeed, on inference concerning $M(f)$
based on i.i.d. observations $X_1, \ldots , X_n$ with density $f_0 \in {\cal P}$.
We have restricted to log-concave densities for several reasons:
\begin{description}
\item[(a)] It is well known that the MLE over the class of all unimodal densities does not exist;
  see e.g. Birg\'e (1997).
\item[(b)]
  On the other hand, MLE's do exist for the class ${\cal P}$ of log-concave densities if $n\ge 2$:
  see, for example,
  \cite{MR2459192}, %
  \cite{RufibachThesis},   %
  \cite{DR2009LC}.
\item[(c)]
  Moreover the MLE's for the class of log-concave densities have remarkable stability and continuity
  properties under model miss-specification:  see e.g.
  \cite{DSS2011LCreg}.  %
\end{description}

Before proceeding with our overview, it will be helpful to introduce some notation
for derivatives.  (Further notation and terminology will be given in Subsection~\ref{sec:notation}.)
In particular, we let $f^{\prime} $ denote the derivative of a differentiable function $f$, and we
write $f^{\prime\prime}$ for the second derivative.  We also use the notation $f^{(i)}$ for the
$i$th derivative of $f$, particularly for higher derivatives.

Concerning estimation of the mode,
\cite{BRW2007LCasymp} %
showed that if $f_0 = e^{\varphi_0}$ where the concave function $\varphi_0$ has
second derivative $\varphi_0^{(2)} \equiv \varphi_0^{\prime\prime}$
at the mode $m_0 = M(f_0)$ of $f_0$ satisfying
$\varphi_0^{(2)} (m_0) < 0$,
then the MLE $M(\widehat{f}_n)$ satisfies
\begin{eqnarray}
  n^{1/5} \left ( M(\widehat{f}_n ) - M(f_0) \right ) \rightarrow_d
  \left ( \frac{(4!)^2 f_0 (m_0)}{f_0^{^{(2)}} (m_0)^2} \right )^{1/5} M (H_2 ^{(2)} )
  \label{LimitDistribMLEofMode}
\end{eqnarray}
where $M(H_2^{(2)})$ has a universal distribution (not depending on $f_0$).
Here $\{ H_2(t) : \ t \in \RR \}$
is  the ``invelope''  process on $\RR$ defined in terms of
the ``driving process'' $\{ Y(t) : \ t \in \RR\}$ defined by
$Y(t) = - t^4 + \int_0^t  W(s) ds$ for $t \in \RR$.
Thus with $X(t) \equiv Y^{(1)} (t) = -4 t^3 + W(t)$,
\begin{eqnarray}
  d X(t) = g_0 (t) dt + d W(t),
  \label{WhiteNoiseCanonicalConcave}
\end{eqnarray}
where $W$ is two-sided Brownian motion on $\RR$ and $g_0 (t) \equiv - 12 t^2$.
The process $H_2$ and its concave second derivative $H_2^{(2)}$
first appeared in \cite{MR1891741,MR1891742} in the study
of other nonparametric estimation problems involving convex or concave functions;
see also \cite{BRW2007LCasymp}.

The limit distribution (\ref{LimitDistribMLEofMode})
gives useful information about the behavior of $M(\widehat{f}_n)$, but
it is somewhat difficult to use for inference because of the constant \\
$( (4!)^2 f_0 (m_0) / f_0^{(2)} (m_0)^2 )^{1/5}$ which involves
the unknown density through
the second derivative
$f_0^{(2)} (m_0)$.  This can be estimated via smoothing methods,
but because we wish to avoid the consequent problem of choosing bandwidths
or other tuning parameters, we take a different
approach to inference here.

Instead, we first consider the following testing problem:   test
\begin{eqnarray*}
  H : \ M(f) = m \ \ \ \mbox{versus} \ \ \ K: \ M(f) \not= m
\end{eqnarray*}
where $m \in \RR$ is fixed.
To construct a likelihood ratio test of $H$ versus $K$ we first need to construct both the unconstrained
MLE's $\widehat{f}_n$ and the mode-constrained MLE's $\widehat{f}_n^0$.
The unconstrained MLE's $\widehat{f}_n$ are available from the results of
\cite{MR2459192}, %
\cite{RufibachThesis},   %
and \cite{DR2009LC} cited above.
Corresponding results concerning the existence and properties of the mode-constrained MLE's
$\widehat{f}_n^0$ are given in the companion paper
\cite{Doss-Wellner:2016ModeConstrained}.  %
Global convergence rates for both estimators are given in \cite{DossWellner:2016a}. %
Once both the unconstrained estimators $\widehat{f}_n$ and the constrained
estimators $\widehat{f}_n^0$ are available, then we can consider
the natural likelihood ratio test of $H$ versus $K$:  reject  the null hypothesis $H$ if
\begin{eqnarray*}
  2 \log \lambda_n \equiv 2 \log \lambda_n (m)  \equiv 2 n \PP_n  ( \log \widehat{f}_n  - \log \widehat{f}_n^0 )
  = 2 n \PP_n (\widehat{\varphi}_n - \widehat{\varphi}_n^0)
\end{eqnarray*}
is ``too large''
where $\widehat{f}_n = \exp ( \widehat{\varphi}_n ) $, $\widehat{f}_n^0  = \exp ( \widehat{\varphi}_n^0 ) $,
$\PP_n = \sum_{i=1}^n \delta_{X_i} / n$ is the empirical measure, and $\PP_n(g) = \int g d\PP_n$.
To carry out this test we need to know how large is ``too large''; i.e.\ we need to know
the (asymptotic) distribution  of $2 \log \lambda_n$ when $H $ is true.
Thus the primary goal of this paper is to prove the following theorem:

\begin{theorem}
  \label{LRasympNullDistribution}
  If $X_1, \ldots , X_n$ are i.i.d. $f_0 = e^{\varphi_0}$ with mode $m$
  where $\varphi_0$ is concave,
  twice continuously differentiable at $m$,
  and $\varphi_0^{(2)}(m) < 0$, %
  then
  \begin{eqnarray*}
    2 \log \lambda_n \rightarrow_d \DD
  \end{eqnarray*}
  where $\DD$ is a universal limiting distribution (not depending on $f_0$);  thus
  $2 \log \lambda_n$ is asymptotically pivotal under the assumption $\varphi_0^{(2)}(m)<0$.
\end{theorem}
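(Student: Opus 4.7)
My plan is to follow the \emph{localize-rescale-pass to the limit} strategy standard for likelihood ratio tests in shape-constrained estimation (as used by Banerjee, Wellner, and others for monotone problems), adapted to the log-concave setting via the canonical white-noise model \eqref{WhiteNoiseCanonicalConcave}. I would first exploit the Fenchel/KKT characterizations of both MLEs: for the unconstrained log-concave MLE the stationarity condition $\PP_n h = \int h \widehat{f}_n \d{x}$ holds whenever $h$ is in the tangent cone at $\widehat{\varphi}_n$, and applied to $h = \widehat{\varphi}_n$ itself (since $(1+t)\widehat{\varphi}_n$ stays concave for small $t$) it yields $\PP_n \widehat{\varphi}_n = \int \widehat{\varphi}_n \widehat{f}_n \d{x}$; the analogous identity holds for the mode-constrained MLE by \cite{Doss-Wellner:2016ModeConstrained}, since both $(1+t)\widehat{\varphi}_n^0$ and constant shifts preserve the mode at $m$. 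Consequently
\[
2\log\lambda_n = 2n\int \widehat{\varphi}_n \widehat{f}_n \d{x} - 2n\int \widehat{\varphi}_n^0 \widehat{f}_n^0 \d{x}.
\]
A Taylor expansion in $\widehat{\varphi}_n - \varphi_0$ and $\widehat{\varphi}_n^0 - \varphi_0$, combined with the density constraints $\int \widehat{f}_n = \int \widehat{f}_n^0 = 1$, reduces this to a leading-order quadratic functional in those differences against the weight $f_0$.

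Next I would localize and rescale. The global rates of \cite{DossWellner:2016a} and \cite{Doss-Wellner:2016ModeConstrained} give $\widehat{\varphi}_n - \varphi_0 = O_p(n^{-2/5})$ pointwise near $m$, and $\widehat{\varphi}_n = \widehat{\varphi}_n^0$ outside an interval $[m - C n^{-1/5}, m + C n^{-1/5}]$ with probability arbitrarily close to one for $C$ large; only a shrinking window around $m$ contributes. Setting $x = m + n^{-1/5} t$ and
\[
Y_n(t) = n^{2/5}(\widehat{\varphi}_n - \varphi_0)(m + n^{-1/5}t), \quad Y_n^0(t) = n^{2/5}(\widehat{\varphi}_n^0 - \varphi_0)(m + n^{-1/5}t),
\]
and using $\varphi_0'(m) = 0$ with $\varphi_0''(m) < 0$, the leading-order quadratic becomes a continuous functional of $(Y_n, Y_n^0)$ after absorbing $f_0(m)$ and $|\varphi_0''(m)|$ into an affine time rescaling. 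The joint convergence $(Y_n, Y_n^0) \Rightarrow (\HH, \HH^0)$ to the unconstrained and mode-at-zero-constrained concave MLEs in the canonical white-noise model \eqref{WhiteNoiseCanonicalConcave} follows from the local limit theorem for $\widehat{\varphi}_n$ in \cite{BRW2007LCasymp} together with its analogue for the constrained MLE in \cite{Doss-Wellner:2016ModeConstrained}. The continuous mapping theorem then delivers $2\log\lambda_n \rightarrow_d \DD$, where $\DD$ is a universal functional of $(\HH, \HH^0)$ because the nuisance constants cancel exactly under the rescaling.

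The main obstacle is establishing joint weak convergence of $(Y_n, Y_n^0)$ in a topology strong enough to support the continuous mapping argument, together with the tail control needed to extend the quadratic functional from a compact window to the full line. The constrained MLE carries an enforced kink at $m$, while the unconstrained one generically has a smooth peak near (but not at) $m$, so the two local processes have genuinely different boundary behavior even in the limit, and matching local analyses from \cite{Doss-Wellner:2016ModeConstrained} are required for uniform tightness. Showing that the Taylor remainders discarded in the reduction step contribute $o_p(1)$ uniformly also hinges on the $L^\infty$ control of $\widehat{\varphi}_n - \varphi_0$ on the localized window provided by that same companion paper. Once these technical inputs are in hand the final convergence is essentially a continuous mapping argument, and the universality of $\DD$ follows automatically since no $f_0$-dependent constant survives the absorbing rescaling.
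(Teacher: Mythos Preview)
Your overall strategy---localize, rescale, pass to the limit via joint convergence of the local processes---is indeed the one the paper follows, and your starting identity $\PP_n\widehat{\varphi}_n=\int\widehat{\varphi}_n\widehat{f}_n\,d\lambda$ (and its constrained analogue) is correct and is exactly how the paper opens the argument in \eqref{LR-DecompStepone}. The reduction to a quadratic form in the local processes and the scaling argument producing a pivotal limit are likewise essentially what appears in Section~\ref{ssec:PfSketchThm1} and in \eqref{LimitInDistributionMainTerm}--\eqref{eq:DDb-formula-gammas}.

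However, there is a genuine gap in your localization step. You assert that ``$\widehat{\varphi}_n=\widehat{\varphi}_n^0$ outside an interval $[m-Cn^{-1/5},m+Cn^{-1/5}]$ with probability arbitrarily close to one for $C$ large.'' This is the mechanism that makes the Banerjee--Wellner monotone argument work, but it is \emph{false} in the log-concave setting. The mode constraint is not local: changing the estimator near $m$ forces global adjustments because $\widehat{\varphi}_n^0$ must remain concave and integrate to one, so the two estimators differ (slightly) over the entire support. Theorem~\ref{JointLimitingDistributions}A only gives $n^{2/5}(\widehat{\varphi}_n(x_0)-\widehat{\varphi}_n^0(x_0))\to_p 0$ at each fixed $x_0\neq m$, i.e.\ a pointwise $o_p(n^{-2/5})$ difference, not exact equality; and a pointwise $o_p(n^{-2/5})$ bound over a region of fixed length is far from enough to make the contribution to $2\log\lambda_n$ negligible at the required $o_p(1)$ scale.

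Controlling this ``global'' remainder---the term $R_{n,1}^c$ in \eqref{RemainTermOneCompl}, an integral over $[X_{(1)},X_{(n)}]\setminus D_n$---is in fact the principal technical difficulty of the paper and occupies all of Subsection~\ref{ssec:GlobalRTs}. The argument there does not rely on the estimators coinciding; instead it uses the characterization theorems for both estimators in the tangent directions $\widehat{\varphi}_{n,t}$ and $\widehat{\varphi}_{n,t}^0$ to show first that $\int_{X_{(1)}}^{t_n}(\widehat{\varphi}_n-\widehat{\varphi}_n^0)^2\widehat{f}_n\,d\lambda=O_p(n^{-1})$ over the \emph{entire} left half-line (Lemma~\ref{lem:rem:step2:Tni-Opn-1}), then bootstraps this to find a subinterval of length $O_p(n^{-1/5})$ on which $\|\widehat{\varphi}_n-\widehat{\varphi}_n^0\|=o_p(n^{-2/5})$ and on which nearby knots of the two estimators are $o_p(n^{-1/5})$ apart (Lemma~\ref{lem:rem:local-to-global-square-integral}), and finally feeds this back to upgrade the global integral to $o_p(n^{-1})$. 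Without some argument of this kind your proof has no control over the contribution away from the mode, and the continuous-mapping step cannot be completed.
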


\noindent
With Theorem~\ref{LRasympNullDistribution} in hand, our likelihood ratio test with (asymptotic) size $\alpha \in (0,1)$ becomes:
``reject $H$ if  $2 \log \lambda_n > d_{\alpha}$'' where $d_{\alpha}$ is chosen so that\\
$P(\DD > d_{\alpha} ) = \alpha$.  Furthermore, we can then form confidence intervals for $m$
by inverting the family of likelihood ratio tests:
let
\begin{eqnarray}
  J_{n,\alpha} \equiv \{ m  \in \RR : \ 2\log \lambda_n (m) \le d_{\alpha} \} .
  \label{LRConfInterval}
\end{eqnarray}
Then it follows
that for $f_0 \in {\cal P}_m = \{ f \in {\cal P}  : \ M(f ) = m \}$ with $(\log f_0)^{(2)} (m) <0$, we have
$$
P_{f_0} ( m \in  J_{n,\alpha} ) \rightarrow P( \DD \le d_{\alpha} ) = 1- \alpha.
$$
This program is very much analogous to the methods for pointwise inference for
nonparametric estimation of monotone increasing or decreasing functions
developed by \cite{MR1891743}  %
and
\cite{MR2341693}.   %
Those methods have recently been extended to include pointwise inference for nonparametric estimation
of a monotone density
by \cite{MR3375875}.  %
Theorem~\ref{LRasympNullDistribution} says that $2 \log \lambda_n$ is (asymptotically) pivotal
over the class of all log-concave densities $f_0$ satisfying $(\log f_0)^{(2)} (m) < 0$.
(That log-likelihood ratios are frequently asymptotically pivotal is sometimes known as the ``Wilks phenomenon'' in honor of the classical result in this direction in regular parametric models by
\cite{wilks1938large}.)  We can specify more about the form of the limit random variable $\DD$; see Remark~\ref{rem:rem:DD-formula}.

\medskip

A secondary goal of this paper is to begin a study of the likelihood ratio statistics $2 \log \lambda_n$ under fixed alternatives.
We leave the study of the log likelihood ratio statistic under local (contiguous) alternatives for future work.
Our second theorem concerns the situation when $f \in {\cal P}$ has mode $M(f) \not= m$.

\begin{theorem}
  \label{LR-limit-FixedAlternative}
  Suppose that $f_0 \in {\cal P}$ with $m \notin MI(f_0)$.  %
  Then
  \begin{eqnarray}
    \frac{2}{n} \log \lambda_n (m)
    & \rightarrow_p & 2 K(f_0, f_m^0 ) \nonumber  \\
    & = & 2 \inf \{ K(f_0, g) : \ g \in {\cal P}_m \} > 0  \label{LR-limit-FixedAlt}
  \end{eqnarray}
  where $ f_m^0 \in {\cal P}_m$ achieves the infimum in \eqref{LR-limit-FixedAlt} and
  \begin{eqnarray*}
    K(f,g) \equiv \left \{ \begin{array}{c l} \int f (x) \log \frac{f(x)}{g(x)} d x , & \ \ \mbox{if} \ \ f \prec \prec g \\  \infty, &  \ \ \mbox{otherwise} .
      \end{array} \right .
  \end{eqnarray*}
\end{theorem}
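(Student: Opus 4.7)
The plan is to analyze the two halves of the log-likelihood ratio separately. Write
\begin{equation*}
\frac{2}{n}\log\lambda_n(m) \;=\; 2\mathbb{P}_n\widehat{\varphi}_n \;-\; 2\mathbb{P}_n\widehat{\varphi}_n^0,
\end{equation*}
and show that $\mathbb{P}_n\widehat{\varphi}_n \to_p P_0\log f_0$ and $\mathbb{P}_n\widehat{\varphi}_n^0 \to_p P_0\log f_m^0$. Subtracting and using $P_0\log f_0 - P_0\log f_m^0 = K(f_0,f_m^0)$ then yields the claim. A preliminary step is to establish that the infimum of $K(f_0,\cdot)$ over $\mathcal{P}_m$ is attained by some $f_m^0\in\mathcal{P}_m$: the class $\mathcal{P}_m$ is convex, $K(f_0,\cdot)$ is strictly convex on the set where it is finite, and standard tightness arguments for log-concave densities (as used in the companion paper \cite{Doss-Wellner:2016ModeConstrained} for the empirical version of this projection) give the existence and uniqueness of the minimizer.

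For the unconstrained term, combine the defining MLE inequality $\mathbb{P}_n\widehat{\varphi}_n \geq \mathbb{P}_n\varphi_0$ (whose right-hand side converges to $P_0\varphi_0$ by the law of large numbers, provided $\varphi_0\in L^1(P_0)$, which holds because $f_0$ is log-concave and hence has sub-exponential tails) with the opposite inequality obtained from the Hellinger consistency $h(\widehat{f}_n,f_0)\to_p 0$ established in \cite{DSS2011LCreg} and \cite{DossWellner:2016a}. Concretely, write
\begin{equation*}
\mathbb{P}_n\widehat{\varphi}_n - P_0\varphi_0 = (\mathbb{P}_n-P_0)\widehat{\varphi}_n + P_0(\widehat{\varphi}_n-\varphi_0),
\end{equation*}
control the empirical process term via a Glivenko--Cantelli-type bound (the log-concave MLE is, with high probability, contained in a uniform envelope determined by $f_0$, e.g.\ a quadratic upper bound on $-\widehat{\varphi}_n$ together with a uniform tail bound), and control the deterministic term using $P_0(\widehat{\varphi}_n - \varphi_0) = -K(f_0,\widehat{f}_n) \to 0$, which follows from Hellinger consistency together with dominated convergence using the sub-exponential envelope.

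For the constrained term, apply the same two-sided sandwich. The easy direction uses the MLE property $\mathbb{P}_n\widehat{\varphi}_n^0 \geq \mathbb{P}_n\log f_m^0 \to P_0\log f_m^0$ in probability. For the matching upper bound, one invokes the companion paper's consistency theorem for the mode-constrained MLE under mis-specification, which gives $\widehat{f}_n^0 \to f_m^0$ in a suitable mode of convergence (Hellinger and on compacta), and then argues
\begin{equation*}
\mathbb{P}_n\widehat{\varphi}_n^0 - P_0\log f_m^0 \;\leq\; (\mathbb{P}_n-P_0)\widehat{\varphi}_n^0 + P_0(\widehat{\varphi}_n^0 - \log f_m^0),
\end{equation*}
where the first term vanishes by the log-concave Glivenko--Cantelli result applied to the class containing $\widehat{\varphi}_n^0$, and the second is nonpositive in the limit since $P_0\log g \leq P_0\log f_m^0$ for every $g\in\mathcal{P}_m$ by the defining KL-projection property of $f_m^0$. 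Finally, strict positivity of the limit follows because $m\notin MI(f_0)$ forces $f_0\notin\mathcal{P}_m$, so $f_0\neq f_m^0$ and Gibbs' inequality gives $K(f_0,f_m^0)>0$.

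The main obstacle is the misspecified piece: establishing $\mathbb{P}_n\widehat{\varphi}_n^0 \to_p P_0\log f_m^0$ requires both the global consistency $\widehat{f}_n^0\to f_m^0$ from \cite{Doss-Wellner:2016ModeConstrained} and uniform empirical-process control of log-concave log-densities with a prescribed mode. The log-concave structure is critical here because it supplies the envelope and exponential tail bounds needed to upgrade pointwise/Hellinger consistency to convergence of the integrated log-likelihood.
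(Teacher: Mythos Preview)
Your approach is correct and close in spirit to the paper's, but organized differently. The paper inserts both $f_0$ and $f_m^0$ as intermediaries, writing
\[
n^{-1}\log\lambda_n(m) \;=\; \mathbb{P}_n\log\frac{\widehat f_n}{f_0} \;+\; \mathbb{P}_n\log\frac{f_0}{f_m^0} \;-\; \mathbb{P}_n\log\frac{\widehat f_n^0}{f_m^0}.
\]
The first term is handled by the global rate result of \cite{DossWellner:2016a} (it is $O_p(n^{-4/5})$, not merely $o_p(1)$), and the middle term by the weak law. For the third term the paper does \emph{not} run an empirical-process sandwich; instead it proves a mode-constrained analogue of Theorem~2.15 of \cite{DSS2011LCreg}, namely that the map $Q\mapsto L_m(Q)=\sup_{\varphi\in\mathcal{C}_m}\{\int\varphi\,dQ - \int e^{\varphi}\,d\lambda+1\}$ is continuous in Mallows distance $D_1$. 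Since $D_1(\mathbb{P}_n,P_0)\to 0$ almost surely, this yields $\mathbb{P}_n\widehat\varphi_n^0 = L_m(\mathbb{P}_n)\to L_m(P_0)=P_0\log f_m^0$ directly, and the SLLN handles $\mathbb{P}_n\log f_m^0 - P_0\log f_m^0$.

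The practical difference: your route requires a Glivenko--Cantelli result for the random log-density $\widehat\varphi_n^0$ together with a dominated-convergence upgrade from Hellinger to KL convergence, neither of which is entirely free (the class of concave log-densities is unbounded below, so the envelope argument depends on first confining $\widehat f_n^0$ near $f_m^0$). The paper's route absorbs all of this into the single continuity theorem for $L_m$, which is the natural extension of the \cite{DSS2011LCreg} machinery and simultaneously delivers existence of $f_m^0$, consistency of $\widehat f_n^0$, and convergence of the integrated log-likelihood. Your argument would ultimately need essentially the same ingredients, just unpacked; the paper's packaging is cleaner and reuses the DSS framework more transparently.
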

Here $f \prec \prec g$ means $f = 0$ whenever $g = 0$ except perhaps on a set
of Lebesgue measure $0$.  The proof of
Theorem~\ref{LR-limit-FixedAlternative}
is given in Subsection~\ref{ssec:PfSketchThm2},
and
relies on the methods used by
\cite{MR2645484} %
and \cite{DSS2011LCreg},  %
in combination with the results of
\cite{DossWellner:2016a}.
Theorem~\ref{LR-limit-FixedAlternative} implies consistency of the likelihood
ratio test based on the critical values from
Theorem~\ref{LRasympNullDistribution}.  That is:   let $d_{\alpha}$ satisfy $P(
\DD> d_{\alpha}) = \alpha$ for $0 < \alpha < 1$, and suppose we reject $H: \
M(f) = m$ if $2 \log \lambda_n (m) > d_{\alpha}$.

\begin{corollary}  If the hypotheses of Theorem~\ref{LR-limit-FixedAlternative} hold, then the
  likelihood ratio test  ``reject $H$ if $2\log \lambda_n(m)  > d_{\alpha}$'' is consistent:  if $f \notin {\cal P}_m$, then
  \begin{eqnarray*}
    P_f ( 2 \log \lambda_n (m) > d_{\alpha} ) \rightarrow 1 .
  \end{eqnarray*}
\end{corollary}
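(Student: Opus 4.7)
The plan is to deduce the corollary directly from Theorem~\ref{LR-limit-FixedAlternative}; essentially all of the substantive work is already contained in that theorem, and the corollary is a short rescaling argument. First I would fix $f_0 \notin \mathcal{P}_m$ (so that $m \notin MI(f_0)$, since the minimizer of Kullback--Leibler distance from $f_0$ over $\mathcal{P}_m$ cannot equal $f_0$ itself) and invoke the theorem to conclude that $n^{-1} \cdot 2\log \lambda_n(m) \to_p 2 K(f_0, f_m^0)$, where $c := 2K(f_0, f_m^0) > 0$ by the strict positivity asserted in \eqref{LR-limit-FixedAlt}.

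Next I would note that the critical value $d_\alpha$ is a fixed constant (depending only on the universal distribution $\mathbb{D}$ and $\alpha$), and write
\begin{equation*}
  P_{f_0}\bigl(2\log \lambda_n(m) > d_\alpha \bigr)
  = P_{f_0}\!\left(\frac{2}{n}\log \lambda_n(m) > \frac{d_\alpha}{n}\right).
\end{equation*}
Since $d_\alpha / n \to 0$ and $\frac{2}{n}\log \lambda_n(m) \to_p c > 0$, for any $0 < \epsilon < c$ we have, for all $n$ sufficiently large, $d_\alpha / n < c - \epsilon$, and hence
\begin{equation*}
  P_{f_0}\!\left(\frac{2}{n}\log \lambda_n(m) > \frac{d_\alpha}{n}\right)
  \ge P_{f_0}\!\left(\left| \frac{2}{n}\log \lambda_n(m) - c \right| < \epsilon \right) \to 1.
\end{equation*}
This gives the desired conclusion $P_{f_0}(2\log \lambda_n(m) > d_\alpha) \to 1$.

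There is no real obstacle in this argument: all the work is in Theorem~\ref{LR-limit-FixedAlternative}, whose proof handles the consistency of the unconstrained and mode-constrained MLEs under misspecification and the identification of the KL projection $f_m^0$. Once the theorem is in hand, the corollary reduces to the elementary observation that a sequence of statistics growing linearly in $n$ will eventually exceed any fixed threshold with probability tending to one.
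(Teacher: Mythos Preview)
Your argument is correct and is precisely the intended one: the paper does not give a separate proof of the corollary, stating only that Theorem~\ref{LR-limit-FixedAlternative} implies consistency, and your rescaling argument is exactly how that implication goes through.
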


Here is an explicit example:

\begin{example}
  Suppose that $f $ is the Laplace density given by
  $$
  f(x) = (1/2) \exp ( - | x |) .
  $$
  First we note that $M(f) = 0$ so that $f \notin {\cal P}_1$.   Thus for testing
  $H : \ M(f) = 1$ versus $K : M(f) \not= 1$,  the Laplace density $f$ satisfies $f \in {\cal P} \setminus {\cal P}_1$.
  So we have (incorrectly) hypothesized that $M(f) = 1 \equiv m$.  In this case
  the constrained MLE $\widehat{f}_n^0$ satisfies $\int | \widehat{f}_n^0 - f^0 | dx \rightarrow_{a.s.} 0$
  where $f^0 \equiv g^* \in {\cal P}_1$ is determined by
  Theorem~\ref{DSS-thm2.7m} %
  which is
  the population analogue of
  Theorem 2.10 of \cite{Doss-Wellner:2016ModeConstrained}.  %
  It also satisfies (\ref{LR-limit-FixedAlt}) in Theorem 1.2.  In the present case, $g^*  = g_{a^*} $ where
  $\{ g_a : \ a \in (0,1]\}$ is the family of densities given by
  \begin{eqnarray*}
    g_a (x) = \left \{ \begin{array}{l c} (1/2) e^x, & - \infty < x \le -a \\ (1/2) e^{-a} , & -a \le x \le 1, \\ (1/2) e^{-a} e^{-c(x-1)}, & \ \ 1 \le x < \infty ,
      \end{array} \right .
  \end{eqnarray*}
  where $c \equiv c(a) = 1/(2e^a - (2+a))$ is chosen so that $\int g_a (x)dx =1$.
  Here it is not hard to show that $a^* \approx .490151 \ldots $ satisfies $c(a^*)^2 = \exp ( - (a^* -1))$, while
  $K(f,f^0) = K(f,g_{a^*} ) \approx 0.03377\ldots $.
\end{example}

Although the basic approach here has points in common with the developments in \cite{MR1891743} and \cite{MR2341693}, the details of the proofs require several new tools and techniques due to the relative lack of development of theory for the mode-constrained log-concave MLEs.  Furthermore, the proof of Theorem~\ref{LRasympNullDistribution} is significantly more complicated than corresponding proofs in \cite{MR1891743}, \cite{MR2341693}, or \cite{MR3375875}:  in the present context, the mode-constrained estimator and the unconstrained estimator are not identically equal to each other away from the constraint, whereas in many monotonicity-based cases, the corresponding constrained and unconstrained estimators are indeed equal away from the constraint.  In the case of monotone density estimation studied by
\cite{MR3375875}, the constrained and unconstrained estimators are not identically equal away from the constraint, but the differences can be handled using the so-called min-max formula (see e.g., Lemma~3.2 of \cite{MR3375875}),  %
which does not have an analog for concavity-based problems.  Thus, beyond being interesting in its own right, the proof of Theorem~\ref{LRasympNullDistribution} is useful for opening the door to the study of likelihood ratios in other concavity/convexity-based problems.  These could be likelihood ratios for locations of extrema or likelihood ratios for the values (heights) of functions in concavity/convexity-based problems.
We present some discussion of possible extensions in Section~\ref{sec:conclusions}.

To prove Theorem~\ref{LRasympNullDistribution} we first prepare the way by reviewing the local asymptotic distribution theory for the unconstrained estimators $\widehat{f}_n$ and $\widehat{\varphi}_n$ developed by \cite{BRW2007LCasymp}
and asymptotic theory for $\widehat{f}_n^0$ and $\widehat{\varphi}_n^0$ developed by \cite{Doss-Wellner:2016ModeConstrained}.  These results are stated in Section~\ref{sec:estimator-asymptotics}.

Section~\ref{sec:PfSketches} contains an outline of our proof of
Theorem~\ref{LRasympNullDistribution} and the full proof of  Theorem \ref{LR-limit-FixedAlternative}.  The complete details of the long proof of Theorem~\ref{LRasympNullDistribution} are deferred to Subsections \ref{ssec:ProofsLocalRTs} and \ref{ssec:GlobalRTs}.
In Subsection \ref{ssec:ProofsLocalRTs} we treat remainder terms in a local neighborhood of the mode $m$, while remainder terms away from the mode are treated in Subsection \ref{ssec:GlobalRTs}.  Our proofs in Subsections \ref{ssec:ProofsLocalRTs} and \ref{ssec:GlobalRTs} rely heavily on the theory developed for the constrained estimators in \cite{Doss-Wellner:2016ModeConstrained} and on the new uniform consistency results for the constrained estimator presented in
Section~\ref{subsec:UniformConsist-Rates}
(with proofs in Section~\ref{ssec:ConsistencyProofs}).

In Section 5 we present Monte-Carlo estimates of quantiles of the distribution of $\DD$ and provide empirical evidence supporting
the universality of the limit distribution (under the assumption that $\varphi_0^{(2)}(m) < 0$).  We  illustrate the likelihood
ratio confidence sets with
Monte Carlo evidence demonstrating the
coverage probabilities of our proposed intervals are near the nominal levels.
Further simulation studies and application to two data sets can be found in
\cite{Doss-Wellner:2016ModeInference-arxiv-v2}.
Section 6 gives a brief description of further problems and potential developments.
We also discuss connections with the results of
\cite{MR947566},    %
\cite{MR964293},    %
\cite{MR1105839},  %
and
\cite{MR1671670, MR1747496}.  %
In Subsection~\ref{sec:notation} we discuss notation and terminology.

\subsection{Notation and terminology}
\label{sec:notation}

Several classes of concave functions will play a central role in this paper.
\begin{eqnarray}
  {\cal C} := \{ \varphi : \ \RR \rightarrow [-\infty, \infty) \ | \ \varphi \ \ \mbox{is concave, closed, and proper} \}
  \label{UnconstrainedConcaveClass}
\end{eqnarray}
and, for any fixed $ m \in \RR$,
\begin{eqnarray}
  {\cal C}_m := \{ \varphi \in {\cal C} \ | \ \varphi(m) \ge \varphi (x) \ \ \mbox{for all} \ \ x \in \RR \} .
  \label{ConstrainedConcaveClass}
\end{eqnarray}
Here proper and closed concave functions are as defined in
\cite{MR0274683}, %
pages 24 and 50.  We will follow  the convention that all concave functions $\varphi$
are defined on all of $\RR$ and take the value $-\infty$ off of their effective domains  $\mbox{dom}(\vp)$  where
$\mbox{dom} (\varphi ) := \{ x \ : \ \varphi (x) > -\infty \}$
(\cite{MR0274683}, %
page 40). %
Recall from the previous section that  %
the classes of unconstrained and constrained log-concave densities are then
\begin{eqnarray*}
  && {\cal P} := \left \{ e^{\varphi}  \ : \ \int e^{\varphi} d \lambda = 1, \ \ \varphi  \in {\cal C} \right \} , \ \ \ \mbox{and}\\
  && {\cal P}_m := \left \{ e^{\varphi} \ : \ \int e^{\varphi} d \lambda =1, \ \ \varphi \in {\cal C}_m \right \}
\end{eqnarray*}
where $\lambda$ is Lebesgue measure on $\RR$.
We let $X_1, \ldots , X_n$ be the observations, independent and identically distributed with density
$f_0$ with respect to Lebesgue measure.
Here we assume throughout that $f_0 \in {\cal P}$ and frequently that $f_0 = e^{\varphi_0} \in {\cal P}_m$
for some $m\in \RR$.
We let $X_{(1)} < \cdots < X_{(n)}$ denote the order statistics of the $X_i$'s,
let $\PP_n = n^{-1} \sum_{i=1}^n \delta_{X_i}$ denote the empirical measure, and let
$\FF_n (x) = n^{-1} \sum_{i=1}^n 1_{(-\infty,x]} (X_i )$ denote the empirical distribution function.
We define the log-likelihood criterion function $\Psi_n : {\cal C} \rightarrow \RR$ by
\begin{eqnarray}
  \Psi_n (\varphi )
  = \frac{1}{n} \sum_{i=1}^n \varphi (X_i)  - \int_{\RR} e^{\varphi (x)} dx
  = \PP_n \varphi  - \int_{\RR} e^{\varphi} d\lambda
  \label{eqn:AdjLogLikCriterion}
\end{eqnarray}
where we have used the standard device of including the Lagrange term $\int_{\RR} e^{\varphi (x)} dx$
in $\Psi_n$ so that
we can maximize $\Psi_n$ over all concave functions ${\cal C} $ or ${\cal C}_m$
(rather than maximizing over classes corresponding to density functions).
This is as in
\cite{MR663433}.  %
We will denote the unconstrained MLEs of $\varphi_0$,  $f_0$, and $F_0$ by
$\widehat{\varphi}_n$, $\widehat{f}_n$, and $\widehat{F}_n$ respectively.
These
exist uniquely by Proposition~1 of \cite{MR1941467}. %
The corresponding
constrained estimators with mode $m$ will be denoted by
$\widehat{\varphi}_n^0$, $\widehat{f}_n^0$, and $\widehat{F}_n^0$.
These
exist uniquely by
Theorem 2.6 of \cite{Doss-Wellner:2016ModeConstrained}
(or Lemma~2.0.3 of
\cite{Doss:2013}). %
Thus
\begin{eqnarray*}
  \widehat{\varphi}_n \equiv \argmax_{\varphi \in {\cal C}} \Psi_n (\varphi),\ \ \ \mbox{and} \ \ \
  \widehat{\varphi}_n^0  \equiv \argmax_{\varphi \in {\cal C}_m } \Psi_n (\varphi) .
\end{eqnarray*}

\section{Uniform consistency and rates}
\label{subsec:UniformConsist-Rates}

Here we recall the uniform  rate-consistency theorem of \cite{DR2009LC}, and give a
partial analogue
for the mode-constrained MLE.
The new result,
given in Theorem~\ref{thm:GlobalConsistencyWithRates}
Part~\ref{thm:GlobalConsistencyWithRatesConstrained} below,
is of interest in its own right for describing the theoretical behavior of the mode-constrained MLE.  Additionally,
the proof of Theorem~\ref{LRasympNullDistribution} relies on (both parts of) Theorem~\ref{thm:GlobalConsistencyWithRates}.
It should be mentioned that
Theorem~\ref{thm:GlobalConsistencyWithRates}
Part~\ref{thm:GlobalConsistencyWithRatesConstrained}
is a non-trivial extension of the theorem of  \cite{DR2009LC}, with a fairly difficult proof.

To state the uniform results we define ${\cal H}^{\beta, L} (I)$ to be the collection of real-valued functions
$g$ on the closed interval $I$ satisfying
$| g(y) - g(x) | \le L | y -x |$ if $\beta =1$ and $| g' (y) - g' (x) | \le L | y - x|^{\beta-1}$
if $\beta > 1$, for all $x,y \in I$.    We let $\rho_n \equiv n^{-1} \log n$.

\begin{theorem}
  \label{thm:GlobalConsistencyWithRates}
  (Uniform consistency and rates of convergence.)
  \begin{enumerate}[label=\Alph*.,ref=\Alph*,leftmargin=*]
  \item  \label{thm:GlobalConsistencyWithRatesUnconstrained}
    (\cite{DR2009LC})    Suppose that $f_0 \in {\cal LC }$. %
    If $\varphi_0 \in {\cal H}^{\beta, L}(K)$ for some $1 \le \beta \le 2$, $L>0$, and
    $K = [b,c] \subset \mbox{int} (\{ f_0 > 0 \} )$, then
    \begin{align}
      \sup_{t \in K} ( \widehat{\varphi}_n - \varphi_0 )(t) & =  O_p ( \rho_n^{\beta/(2\beta+1)}) ,  \ \ \ \mbox{and}  \label{DRURup}\\
      \sup_{t \in K_n} (\varphi_0 - \widehat{\varphi}_n )(t) & = O_p ( \rho_n^{\beta/(2\beta+1)})   \label{DRURdown}
    \end{align}
    where $K_n \equiv [b + \rho_n^{1/(2\beta+1)} , c - \rho_n^{1/(2\beta+1)}]$.
    These results remain true when $\widehat{\varphi}_n$ is replaced by $\widehat{f}_n$ and $\vvo$ by $\ffo$.
  \item  \label{thm:GlobalConsistencyWithRatesConstrained}
    Suppose that $f_0 \in {\cal LC}_m$,  $\varphi_0 \in {\cal H}^{2,
      L}(K)$ for some $L>0$, $\vp_0^{(2)}(m) < 0$, and $K = [b,c] \subset
    \mbox{int} (\{ f_0 > 0 \} )$. Then the results of Part~A hold true with
    $\beta = 2$, with $\widehat{\varphi}_n$ replaced by
    $\widehat{\varphi}_n^0$ and with $\ffn$ replaced by $\ffna$.
  \end{enumerate}
\end{theorem}

\noindent
The proof of Theorem~\ref{thm:GlobalConsistencyWithRates} is given in Appendix~\ref{ssec:ConsistencyProofs}.

\section{Unconstrained and Constrained local limit processes}
\label{sec:estimator-asymptotics}

The limit distribution of $2 \log \lambda_n$, under
the hypotheses of Theorem~\ref{LRasympNullDistribution},
depends on the joint distribution of $\vvn(x)$ and $\vvna(x)$ at points
 $x$ in $n^{-1/5}$-neighborhoods of $m$.
In proving Theorem~\ref{LRasympNullDistribution} it is also helpful to know that $\vvn(x)$ and $\vvna(x)$ are asymptotically equivalent at fixed $x \ne m$ when $\vp_0^{\prime \prime}(x)<0$ and $M(f_0)=m$.
Thus, in this section we recall the limit distributions of $\vvn$ and $\vvna$ from
Theorem 2.1 of \cite{BRW2007LCasymp}
and
Theorems~5.5 and  5.7 (see also Theorem~5.8) of   \cite{Doss-Wellner:2016ModeConstrained}. %
The process giving the limit distribution of $\vvn$ was first studied by
\cite{MR1891741}.   %
Here are the assumptions we will need.

\begin{assumption}
  \label{CurvatureAtTheMode}
  (Curvature at $m$)  Suppose that $X_1, \ldots , X_n$ are i.i.d. $f_0 = e^{\varphi_0} \in {\cal P}_m$
  and that  $\varphi_0$ is twice continuously differentiable at $m$ with
  $\varphi_0^{\prime \prime} (m) < 0$.
\end{assumption}

\begin{assumption}
  \label{CurvatureAwayFromMode}
  (Curvature at $x_0 \not= m$)  Suppose that $X_1, \ldots , X_n$ are i.i.d. $f_0 = e^{\varphi_0} \in {\cal P}_m$
  and that $\varphi_0$ is twice continuously differentiable at $x_0 \not= m$ with
  $\varphi_0^{\prime \prime} (x_0) < 0$ and $f_0 (x_0) > 0$.
\end{assumption}

\begin{theorem}[\cite{BRW2007LCasymp}, \cite{Doss-Wellner:2016ModeConstrained}]
  \label{JointLimitingDistributions}
  A.  (At a point $x_0 \not= m$).
  Suppose that $\varphi_0$ and $f_0$ satisfy Assumption~\ref{CurvatureAwayFromMode}.
  Then
  \begin{eqnarray*}
    \left ( \begin{array}{l} n^{2/5} ( \widehat{\varphi}_n (x_0) - \varphi_0 (x_0)) \\
        n^{2/5} ( \widehat{\varphi}_n^0 (x_0) - \varphi_0 (x_0))
      \end{array} \right ) \rightarrow_d
    \left ( \begin{array}{l} \VV \\ \VV \end{array} \right )
  \end{eqnarray*}
  where $\VV \equiv C(x_0 , \varphi_0 ) H^{(2)} (0)$,
  where $H$ is described in
  Theorem~5.1  of
  \cite{Doss-Wellner:2016ModeConstrained}, %
 and where $C(x_0, \varphi_0 )$ is
  as given in (\ref{GammaRelationsPart2}) (but with $m$ replaced by $x_0$):
  \begin{equation*}
        C(x_0 , \varphi_0 ) = \left ( \frac{ | \varphi_0^{(2)} (x_0)|}{4!f_0 (x_0)^2} \right )^{1/5} .
    \label{ConstantReducToCanonical}
  \end{equation*}
  Consequently
  \begin{eqnarray*}
    n^{2/5} ( \widehat{\varphi}_n (x_0) - \widehat{\varphi}_n^0 (x_0) ) \rightarrow_p 0 .
  \end{eqnarray*}
  B.  (In $n^{-1/5}-$neighborhoods of $m$)
  Suppose $\varphi_0$ and $f_0$ satisfy Assumption~\ref{CurvatureAtTheMode}.
  Define processes
  $\XX_n$ and $\XX_n^0 $ by
  \begin{eqnarray*}
    \XX_n (t) \equiv n^{2/5} (\widehat{\varphi}_n (m + n^{-1/5} t) - \varphi_0 (m) ) \\
    \XX_n^0 (t) \equiv  n^{2/5} ( \widehat{\varphi}_n^0 (m + n^{-1/5} t) - \varphi_0 (m)).
  \end{eqnarray*}
  Then the finite-dimensional distributions of $(\XX_n (t), \XX_n^0 (t))$ converge in distribution to the
  finite-dimensional distributions of the processes
  \begin{eqnarray*}
    (\widehat{\varphi}_{a,\sigma} (t), \widehat{\varphi}_{a,\sigma}^0 (t) )\stackrel{d}{=}
    \frac{1}{\gamma_1 \gamma_2^2} ( \widehat{\varphi} (t/\gamma_2) , \widehat{\varphi}^0 (t/\gamma_2) )
    \equiv (\XX (t), \XX^0 (t))
  \end{eqnarray*}
  where $H, H^0, H^{(2)} = \widehat{\varphi}$, and $(H^0)^{(2)} = \widehat{\varphi}^0$
  are as described in
  Theorems~5.1 and 5.2 of
  \cite{Doss-Wellner:2016ModeConstrained}, %
  and $\gamma_i$, $i=1,2$, is described in Subsection~\ref{ssec:PrepLimitProcessesScaling}.
  Furthermore, for $p\ge 1$
  $$
  (\XX_n (t) , \XX_n^0 (t) ) \rightarrow_d (\XX(t) , \XX^0(t) )
  $$
  in ${\cal L}^p [-K,K] \times {\cal L}^p [-K,K]$ for each $K>0$.
\end{theorem}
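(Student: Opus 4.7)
The plan is to extend Theorem~2.1 of \cite{BRW2007LCasymp} to the joint behavior of the unconstrained and mode-constrained MLEs, using for the constrained estimator the finite-sample Fenchel characterization developed in \cite{Doss-Wellner:2016ModeConstrained} together with the white noise characterization of Theorem~\ref{ConstrainedWhiteNoiseProblem}. For Part A, the unconstrained marginal is exactly \cite{BRW2007LCasymp} (Theorem~2.1), giving $n^{2/5}(\widehat{\varphi}_n(x_0) - \varphi_0(x_0)) \to_d \mathbb{V} = C(x_0,\varphi_0) H^{(2)}(0)$. The new content is that at a fixed $x_0 \neq m$, the mode constraint is locally inactive: an $n^{-1/5}$-neighborhood of $x_0$ is eventually disjoint from any shrinking neighborhood of $m$, so $\widehat{\varphi}_n^0$ restricted to such a neighborhood satisfies the same local Fenchel-type optimality conditions as $\widehat{\varphi}_n$. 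Combined with the consistency of $\widehat{\varphi}_n^0$ on compacts of $\intdom{\varphi_0}$ (from the companion paper), this yields the same canonical limit $\mathbb{V}$ and hence $n^{2/5}(\widehat{\varphi}_n(x_0) - \widehat{\varphi}_n^0(x_0)) \to_p 0$.

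\textbf{Part B: local joint convergence.} Following the approach of \cite{BRW2007LCasymp}, I would first show that the twice-integrated, localized empirical processes centered at $m$, after the scaling of Subsection~\ref{ssec:PrepLimitProcessesScaling}, converge jointly in $C_{\mathrm{loc}}(\mathbb{R})$ to the process $Y_{a,\sigma}$ defined in (\ref{TwiceIntegratedWhiteNoise}), with both localizations driven by the same limit two-sided Brownian motion $W$. Next, I would express $\widehat{\varphi}_n$ and $\widehat{\varphi}_n^0$ via their finite-sample ``invelope'' characterizations: the unconstrained one from \cite{BRW2007LCasymp}, and the mode-constrained analogue from \cite{Doss-Wellner:2016ModeConstrained}. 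The unconstrained invelope map is continuous in the Skorokhod-type topology used in \cite{MR1891741}, and Theorem~\ref{ConstrainedWhiteNoiseProblem} (together with its proof in \cite{Doss-Wellner:2016ModeConstrained}) gives the analogous continuity/uniqueness for the constrained invelope. Applying the continuous-mapping principle jointly to the coupled input processes then yields finite-dimensional convergence of $(X_n(t), X_n^0(t))$ to $(\mathbb{X}(t), \mathbb{X}^0(t))$, expressed through a single common Brownian motion via the scaling relations (\ref{ScalingRelationUnConstrained})--(\ref{ScalingRelationConstrained}).

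\textbf{$\mathcal{L}^p$ upgrade and the main obstacle.} To upgrade finite-dimensional convergence to convergence in $\mathcal{L}^p[-K,K] \times \mathcal{L}^p[-K,K]$, I would combine finite-dimensional convergence with tightness, which reduces to uniform boundedness of $X_n$ and $X_n^0$ on compacts; the former is available from \cite{BRW2007LCasymp} and the latter from the local rate results in \cite{Doss-Wellner:2016ModeConstrained} and \cite{DossWellner:2016a}. The main obstacle is the joint convergence of the two invelopes in step two: while each marginal continuity is known, one must verify that the knot locations of $\widehat{\varphi}_n^0$ nearest to $m$ (converging to $\tau_L, \tau_R$, and the sign-change points $\tau_-^0, \tau_+^0$) remain tight on the $n^{-1/5}$-scale and converge \emph{jointly} with the local empirical process. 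This ensures that the coupled limit is precisely $(\mathbb{X}, \mathbb{X}^0)$ driven by a single $W$ rather than by two independent copies, and it is the place where the detailed knot-spacing and uniqueness results of \cite{Doss-Wellner:2016ModeConstrained} are essential.
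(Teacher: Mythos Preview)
Your proposal is essentially correct and aligns with how the paper handles this theorem. The paper itself does not give a self-contained proof of Theorem~\ref{JointLimitingDistributions}; instead, the remark immediately following the statement explains that it is a consequence of a general joint-convergence theorem for a vector of local processes (the localized invelopes $\tilde{H}_n^{(j)}$, $\tilde{H}_{n,L}^{(j)}$, $\tilde{H}_{n,R}^{(j)}$ together with the driving localized empirical processes $\tilde{\YY}_n$, $\tilde{\YY}_{n,L}$, $\tilde{\YY}_{n,R}$) proved in the companion paper \cite{Doss-Wellner:2016ModeConstrained}. Your sketch---convergence of the common driving process, followed by passage through the unconstrained and constrained invelope characterizations via a continuous-mapping/uniqueness argument, with tightness of the nearest knots to $m$ on the $n^{-1/5}$ scale as the key technical ingredient---is precisely the structure of that general theorem, and you have correctly identified the joint tightness of the constrained knots (and the resulting coupling through a single Brownian motion) as the crux.
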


\section{Proof sketches for Theorems~\ref{LRasympNullDistribution} and \ref{LR-limit-FixedAlternative}}
\label{sec:PfSketches}

Now we present proof sketches for our two main theorems (and make use of the results in the previous two sections).

\subsection{Proof sketch for  Theorem~\ref{LRasympNullDistribution}}
\label{ssec:PfSketchThm1}

To begin our sketch of the proof of Theorem~\ref{LRasympNullDistribution} we first give the basic decomposition we will use.
We begin by using $\int_{\RR} \widehat{f}_n (u) du = 1 = \int_{\RR} \widehat{f}_n^0 (u) du$ to write
\begin{eqnarray}
  2 \log \lambda_n
  & = & 2 n \PP_n (\widehat{\varphi}_n  - \widehat{\varphi}_n^0 ) \nonumber \\
  & = & 2 n \int_{\RR} ( \widehat{\varphi}_n - \widehat{\varphi}_n^0 ) d \FF_n - \int_{\RR} (\widehat{f}_n (u) - \widehat{f}_n^0 (u) ) du \nonumber \\
  & = & 2 n  \int_{[X_{(1)}, X_{(n)} ]}  \left ( \widehat{\varphi}_n d \widehat{F}_n  - \widehat{\varphi}_n^0  d \widehat{F}_n^0 \right )
  \label{LR-DecompStepone} \\
  && \qquad \  - \ 2 n \int_{[X_{(1)}, X_{(n)} ]}  \left ( e^{\widehat{\varphi}_n (u)} - e^{\widehat{\varphi}_n^0 (u)} \right ) du  \nonumber
\end{eqnarray}
where we have used the characterization Theorems 2.2 and 2.8 of
\cite{Doss-Wellner:2016ModeConstrained} with $\Delta = \pm \ \widehat{\varphi}_n$ and $\Delta = \pm \ \widehat{\varphi}_n^0$
respectively.  As we will see, inclusion of the second term in (\ref{LR-DecompStepone}) will be of considerable help
in the analysis.

Now we split the integrals in (\ref{LR-DecompStepone}) into two regions:
let
$D_n \equiv [t_{1}, t_{2}]$ for %
some
$t_{1} < m < t_{2}$
and then let $D_n^c \equiv [X_{(1)}, X_{(n)}] \setminus D_n$.  The set $D_n$
is the region containing the mode $m$; here the unconstrained estimator
$\widehat{\varphi}_n$ and  the constrained estimator
$\widehat{\varphi}_n^0$ tend to differ.  On the other hand, $D_n^c$ is the
union of two sets away from the mode, and on both of these sets the
unconstrained estimator $\widehat{\varphi}_n$ and the constrained estimator
$\widehat{\varphi}_n^0$ are asymptotically equivalent (or at least nearly
so).  Sometimes we will take the $t_i$, $i=1,2$, to be constant in $n$,
sometimes to be fixed or random sequences approaching $m$ as $n \to \infty$.
We will sometimes suppress the dependence of $D_n \equiv D_{n, t_{1}, t_{2}}$
on $t_{i}$, and will emphasize it when it is important.  Now, from
\eqref{LR-DecompStepone}, we can write
\begin{eqnarray*}
  2 \log \lambda_n
  & = & 2 n \left \{
    \int_{D_n} \left ( \widehat{\varphi}_n d \widehat{F}_n - \widehat{\varphi}_n^0  d \widehat{F}_n^0  \right )  \right .
  \left . -  \ \int_{D_n}  \left ( e^{\widehat{\varphi}_n (u)} - e^{\widehat{\varphi}_n^0 (u)}   \right ) du  \right . \\
  &&   \left . + \ \int_{D_n^c} \left ( \widehat{\varphi}_n d \widehat{F}_n - \widehat{\varphi}_n^0  d \widehat{F}_n^0  \right )  \right .
  \left .  -  \ \int_{D_n^c}  \left ( e^{\widehat{\varphi}_n (u)} - e^{\widehat{\varphi}_n^0 (u)} \right ) du  \right \} \\
  & = &  2 n \left \{
    \int_{D_n} \left ( ( \widehat{\varphi}_n - \varphi_0 (m)) d \widehat{F}_n
      - (\widehat{\varphi}_n^0  - \varphi_0 (m)) d \widehat{F}_n^0  \right )  \right . \\
  & &     \left . -  \ \int_{D_n}  \left ( ( e^{\widehat{\varphi}_n (u)} - e^{\varphi_0 (m)}) - ( e^{\widehat{\varphi}_n^0 (u)} - e^{\varphi_0(m)} )  \right ) du  \right \} \\
  && \hspace{2em} + \  2 n (R_{n,1} + R_{n,1}^c )
\end{eqnarray*}
where
\begin{align}
  R_{n,t_{1},t_2}    \equiv  & R_{n,1} \equiv \int_{D_n}\varphi_0 (m)  ( \widehat{f}_n (x) - \widehat{f}_n^0 (x) ) dx ,  \label{RemainTermOne}\\
  R_{n,t_{1},t_2}^c    \equiv & R_{n,1}^c \equiv \int_{D_n^c}  \left ( \widehat{\varphi}_n d \widehat{F}_n - \widehat{\varphi}_n^0  d \widehat{F}_n^0  \right )
  -  \ \int_{D_n^c}  \left ( e^{\widehat{\varphi}_n (u)} - e^{\widehat{\varphi}_n^0 (u)} \right ) du . \label{RemainTermOneCompl}
\end{align}
Now we use an expansion of the exponential function to rewrite the second part of the main term:
since
$$
e^b - e^a =  e^{a} \{ e^{b-a} -1 \} = e^a \{ (b-a) + \frac{1}{2} (b-a)^2 + \frac{1}{6} e^{a^*} (b-a)^3 \}
$$
where $|a^* - a | \le | b -a |$, we have
\begin{eqnarray*}
  \lefteqn{\int_{D_n}  \left ( ( e^{\widehat{\varphi}_n (u)} - e^{\varphi_0 (m)} ) - ( e^{\widehat{\varphi}_n^0 (u)} - e^{\varphi_0(m)} )  \right ) du } \\
  & = & \int_{D_n}  e^{\varphi_0 (m)} \left ( (\widehat{\varphi}_n (u) - \varphi_0 (m))
    + \frac{1}{2} ( \widehat{\varphi}_n (u) - \varphi_0(m))^2 )  \right ) du   + R_{n,2} \\
  &&  \ - \  \int_{D_n}  e^{\varphi_0 (m)} \left ( (\widehat{\varphi}_n^0 (u) - \varphi_0 (m))
    + \frac{1}{2} ( \widehat{\varphi}_n^0 (u) - \varphi_0(m))^2 )  \right ) du   -  R_{n,2}^0
\end{eqnarray*}
where
\begin{eqnarray}
  && R_{n,2} \equiv \int_{D_n} \frac{1}{6} f_0 (m) e^{\tilde{x}_{n,2} (u)} (\widehat{\varphi}_n (u) - \varphi_0 (m))^3 du,
  \label{RemainTermTwoUnconstrained}\\
  && R_{n,2}^0 \equiv  \int_{D_n} \frac{1}{6} f_0 (m) e^{\tilde{x}_{n,2}^0 (u)} (\widehat{\varphi}_n^0 (u) - \varphi_0 (m))^3 du .
  \label{RemainTermTwoConstrained}
\end{eqnarray}
Thus
\begin{eqnarray}
  \lefteqn{2 \log \lambda_n }    \label{LR-SecondDecomposition}\\
  & = &   n \left \{
    2 \int_{D_n} \left ( ( \widehat{\varphi}_n (u)- \varphi_0 (m))  (d \widehat{F}_n(u)  - f_0 (m) du) \right . \right . \nonumber \\
  && \qquad \left . \left .
      - \ 2\int_{D_n} (\widehat{\varphi}_n^0 (u)  - \varphi_0 (m)) ( d \widehat{F}_n^0 (u)  - f_0 (m) du)  \right )  \right .\nonumber  \\
  &&  \qquad \left .
    -  \ \int_{D_n}  \left ( (\widehat{\varphi}_n (u) - \varphi_0 (m))^2 - (\widehat{\varphi}_n^0 (u) - \varphi_0(m) )^2  \right ) f_0(m) du  \right \}
  \nonumber \\
  && + \  2 n (R_{n,1} + R_{n,1}^c + R_{n,2} - R_{n,2}^0 ).  \nonumber
\end{eqnarray}
Now we expand the first two terms in the last display, again using a two term expansion,
$e^{b} - e^{a} = (e^{b-a} -1)e^a = e^a \{ (b-a) + \frac{1}{2} (b-a)^2 e^{a^*} \}$, to find that
\begin{eqnarray*}
  \lefteqn{\int_{D_n}  ( \widehat{\varphi}_n (u)- \varphi_0 (m))  (d \widehat{F}_n(u)  - f_0 (m) du )  }\\
  & = & \int_{D_n} ( \widehat{\varphi}_n (u)- \varphi_0 (m)) ( e^{\widehat{\varphi}_n (u)  - \varphi_0 (u)} -1 ) f_0 (m) du \\
  & = & \int_{D_n} ( \widehat{\varphi}_n (u)- \varphi_0 (m)) \left ( ( \widehat{\varphi}_n (u)- \varphi_0 (m))
    + e^{\tilde{x}_{n,3} (u)} \frac{1}{2} ( \widehat{\varphi}_n (u)- \varphi_0 (m))^2 \right ) f_0(m) du \\
  & = & \int_{D_n} ( \widehat{\varphi}_n (u)- \varphi_0 (m))^2 f_0 (m) du + R_{n,3}
\end{eqnarray*}
where
\begin{eqnarray}
  R_{n,3,t_1,t_2} \equiv
  R_{n,3} = \int_{D_n} \frac{1}{2} f_0 (m) e^{\tilde{x}_{n,3} (u)} ( \widehat{\varphi}_n (u)- \varphi_0 (m))^3 du .
  \label{RemainTermThreeUnconstrained}
\end{eqnarray}
Similarly,
\begin{eqnarray*}
  \lefteqn{
    \int_{D_n}  ( \widehat{\varphi}_n^0  (u)- \varphi_0 (m))  (d \widehat{F}_n^0 (u)  - f_0 (m) du )  }\\
  & = & \int_{D_n} ( \widehat{\varphi}_n^0 (u)- \varphi_0 (m))^2 f_0 (m) du + R_{n,3}^0
\end{eqnarray*}
where
\begin{eqnarray}
  R_{n,3,t_1,t_2}^0 \equiv
  R_{n,3}^0 = \int_{D_n} \frac{1}{2} f_0 (m) e^{\tilde{x}_{n,3}^0 (u)} ( \widehat{\varphi}_n^0 (u)- \varphi_0 (m))^3 du .
  \label{RemainTermThreeConstrained}
\end{eqnarray}
If we let $t_1 = t_{n,1} = m - bn^{-1/5}$ and $t_2 = t_{n,2} = m+ bn^{-1/5}$
for $ b > 0$, then from (\ref{LR-SecondDecomposition}) we now have
\begin{eqnarray}
  2 \log \lambda_n
  & = & n \int_{D_n} f_0 (m) \left \{
    (\widehat{\varphi}_n (u) - \varphi_0 (m))^2 - (\widehat{\varphi}_n^0 (u) - \varphi_0(m) )^2  \right  \}  du \nonumber \\
  && \qquad + \ 2 n \left (R_{n,1} + R_{n,1}^c + R_{n,2} - R_{n,2}^0 + R_{n,3} - R_{n,3}^0 \right ) \nonumber \\
  & \equiv & \DD_{n,b} +  R_n .
  \label{BasicDecompositionFirstForm}
\end{eqnarray}
Now we sketch the behavior of $\DD_{n,b}$.
Let $u = m + v n^{-1/5}$;  with this change of variables and the definition of $t_{n,i}$, $i=1,2$, we can rewrite
$\DD_{n,b}$
as
\begin{align*}
  \MoveEqLeft  \DD_{n,b}
  = f_0(m) n^{4/5}  \int_{-b}^{b}
  \bigg \{
  \left (\widehat{\varphi}_n (m + n^{-1/5} v) - \varphi_0 (m) \right )^2 \\
  & \hspace{3.0cm}  - \left (\widehat{\varphi}_n^0 (m + n^{-1/5} v) - \varphi_0 (m) \right )^2
  \bigg \}  dv.
\end{align*}
By Theorem~\ref{JointLimitingDistributions}B this converges in distribution to
\begin{eqnarray}
  f_0 (m) \int_{-b}^{b}  \left \{ \left (\widehat{\varphi}_{a,\sigma} (v) \right )^2 - \left (\widehat{\varphi}_{a,\sigma}^0 (v) \right )^2 \right \} dv ,
  \label{LimitInDistributionMainTerm}
\end{eqnarray}
where the processes $(\widehat{\varphi}_{a,\sigma} , \widehat{\varphi}_{a,\sigma}^0 )$
are related to $ ( \widehat{\varphi} , \widehat{\varphi}^0)$ by the scaling relations (\ref{ScalingRelationUnConstrained})
and (\ref{ScalingRelationConstrained}).
We conclude that the limiting random variable in (\ref{LimitInDistributionMainTerm}) is equal in distribution to
\begin{eqnarray}
  \lefteqn{f_0 (m) \int_{-b}^{b} \left \{ \left ( \frac{1}{\gamma_1 \gamma_2^2}
        \widehat{\varphi} (  v/\gamma_2) \right )^2
      - \left ( \frac{1}{\gamma_1
          \gamma_2^2}
        \widehat{\varphi}^0 (
        v/\gamma_2) \right )^2
    \right \} dv } \nonumber \\
  & = & \frac{f_0 (m)}{\gamma_1^2 \gamma_2^3} \frac{1}{\gamma_2 }
  \int_{-b}^{b} \left \{ \widehat{\varphi} ( v/\gamma_2)^2
    -   \widehat{\varphi}^0
    ( v/\gamma_2)^2 \right
  \} dv \nonumber  \\
  & = & \int_{-b/\gamma_2}^{b/\gamma_2}  \left \{ \widehat{\varphi} (s)^2
    -   \widehat{\varphi}^0
    (s)^2 \right \}
  ds \label{eq:DDb-formula-gammas}
\end{eqnarray}
in view of (\ref{GammaRelationsPart1}).
This is not yet free of the parameter $\gamma_2$, but it will become so if we
let $b \to \infty$. %
If we show that this is permissible and we show that the remainder term $R_n$ in (\ref{BasicDecompositionFirstForm})
is negligible,   then the proof of
Theorem~\ref{LRasympNullDistribution} will be complete.
For details, see section~\ref{sec:ProofsPart2}.

\medskip

\begin{remark}
  \label{rem:rem:DD-formula} %
  As is suggested by \eqref{eq:DDb-formula-gammas}
  (and proved in section~\ref{sec:ProofsPart2}),
  the form of the random variable $\DD$ from Theorem~\ref{LRasympNullDistribution} is
  \begin{equation*}
    \DD = \int_{-\infty}^\infty \lb \widehat{\varphi}(u)^2 - \widehat{\varphi}^0 (u)^2 \rb \, du.
  \end{equation*}
  The form of this random variable is the same as that found in \cite{MR1891743} %
  and \cite{MR2341693}, if we replace our $\widehat{\varphi}$ and $\widehat{\varphi}^0$ with the corresponding random functions studied in the monotone case.
\end{remark}

\subsection{Proof of Theorem~\ref{LR-limit-FixedAlternative}}
\label{ssec:PfSketchThm2}

Recall
${\cal P}_m = \lb f \in {\cal P} : M(f) = m  \rb$.
We now assume that $f \in {\cal P} \setminus {\cal P}_m$.
Let  %
$\lambda$ be Lebesgue measure and let
\begin{align}
  f_m^0
  & = \mbox{argmin}_{g \in {\cal P}_m} \left \{ - \int \log g f_0 d\lambda \right \}
  \label{eq:defn:fm0} \\
  & =  \mbox{argmin}_{g\in {\cal P}_m} \int f_0 \left ( \log f_0 - \log g \right ) d \lambda
  =  \mbox{argmin}_{g \in {\cal P}_m} K(f_0,g) , \nonumber
\end{align}
where we will make
\eqref{eq:defn:fm0}
rigorous later, in Theorem~\ref{DSS-thm2.2m}.
Let $\PP_n = \sum_{i=1}^n \delta_{X_i} /n$ be the empirical measure and for a function $g$ let $\PP_n(g) = \int g d\PP_n$.
We now have
\begin{align}
  n^{-1} \log \lambda_n (m)
  & =  \PP_n ( \log \widehat{f}_n / \widehat{f}_n^0 )
  =   \PP_n \left \{ \log \frac{\widehat{f}_n}{f_0} \cdot \frac{f_0}{f^0_m}\cdot \frac{f^0_m}{\widehat{f}_n^0} \right \} \nonumber \\
  &=  \PP_n \left \{ \log \frac{\widehat{f}_n}{f_0}  \right \} +  \PP_n \left \{ \log \frac{f_0}{f_m^0} \right \}
  - \PP_n \left \{ \log \frac{\widehat{f}_n^0 }{f^0_m} \right\}. \label{eq:TLLR-alt-3terms}
\end{align}
From this we can conclude that
as $n \to \infty$,
\begin{align*}
  n^{-1} 2 \log \lambda_n(m)  =  O_p (n^{-4/5})
  +  2 \PP_n \left \{ \log \frac{f_0}{f^0_m} \right \}  - o_p (1)
  \rightarrow_p 2 K(f_0, f_m^0).
\end{align*}
That
$\PP_n \left \{ \log \frac{\widehat{f}_n}{f_0}  \right \} = O_p(n^{-4/5})$
follows from \cite{DossWellner:2016a}, Corollary 3.2, page 962.
The  convergence of $2 \PP_n \left \{ \log \frac{f_0}{f^0_m} \right \}$
to $K(f_0, f_m^0)$ follows from the weak law of large numbers.
The indicated negligibility of the third term
in \eqref{eq:TLLR-alt-3terms} follows from Theorem~\ref{DSS-thm2.15m} below
(which is a constrained analogue of Theorem 2.15 of \cite{DSS2011LCreg}).

It remains to justify
the definition given in \eqref{eq:defn:fm0},
and to show that the third term of \eqref{eq:TLLR-alt-3terms} is $o_p(1)$,
under the assumptions of Theorem~\ref{LR-limit-FixedAlternative}.
We first state three theorems.  These are mode-constrained analogues of Theorems 2.2, 2.7, and 2.15 of
\cite{DSS2011LCreg},
and are proved with methods similar to the methods used in
\cite{DSS2011LCreg}.  The full proofs will be given in a separate work on estimation and inference for modal regression functions.

Now we set
\begin{eqnarray*}
  L(\varphi , Q) \equiv \int \varphi dQ - \int e^{\varphi} d\lambda +1
\end{eqnarray*}
and define
\begin{eqnarray*}
  L_m (Q) \equiv \sup_{\varphi \in {\cal C}_m}  L( \varphi , Q)
\end{eqnarray*}
where
\begin{eqnarray*}
  {\cal C}_m
  \equiv \{ \varphi  :  \ m \in MI (\varphi), \varphi \mbox{ concave} \}
\end{eqnarray*}
and  recall
$MI (\varphi ) \equiv \{ x \in \RR : \ \varphi (x) = \sup_{y \in \RR} \varphi (y) \} .$
If for fixed $Q$ there exists $\psi_m \in {\cal C}_m$ such that
$$
L(\psi_m , Q ) = L_m (Q) \in \RR,
$$
then $\psi_m$ will automatically satisfy $\int \exp ( \psi_m (x) ) dx = 1$:
note that $\phi+c \in {\cal C}_m$
for any fixed $\phi \in {\cal C}_m$ and $c \in \RR$.  On the other hand,
\begin{eqnarray*}
  \frac{\partial}{\partial c} L({\cal C} +c , Q)
  & = & \frac{\partial}{\partial c} \left \{ \int ( \phi +c) dQ - e^c \int e^{\phi} d\lambda +1 \right \} \\
  & = & 1 - e^c \int e^{\phi} d \lambda
\end{eqnarray*}
if $L ( \phi , Q) \in \RR$.  Thus $L(\phi+c, Q)$ is maximal for $c = - \log \int e^{\phi} d\lambda $. \\
For the next theorem we need to define
\begin{equation*}
  \mbox{csupp}_m (Q) =  %
  \bigcap   \lb C \subseteq \RR^d : C \mbox{ closed, convex }, Q(C) = 1 , m \in C\rb.
\end{equation*}

\medskip

\begin{theorem}
  \label{DSS-thm2.2m}
  Let $Q$ be a measure on $\RR^d$. %
  The value of $L_m (Q)$ is real if and only if $\int |x | dQ (x) < \infty$ and
  $\mbox{int}( \mbox{csupp}_m (Q) ) \neq \emptyset$.
  In that case there exists a unique
  $\psi_m \equiv \psi_m (\cdot | Q) \in \mbox{argmax}_{\varphi \in {\cal C}_m} L( \varphi , Q)$.
  This function $\psi_m$ satisfies $\int e^{\psi_m} d \lambda =1$ and
  $$
  \mbox{int}( \mbox{csupp}_m (Q) )
  \subseteq \mbox{dom} (\psi_m) \subseteq  \mbox{csupp}_m (Q)
  $$
  where $\mbox{dom} (\psi_m) \equiv \{ x \in \RR^d : \ \psi_m (x) > -\infty \} $.
\end{theorem}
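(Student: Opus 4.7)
The strategy is to parallel the proof of Theorem~2.2 of \cite{DSS2011LCreg}, treating the constraint $m \in MI(\varphi)$ as a closed convex side condition on the concave function $\varphi$, and checking that it survives every limiting operation used in the unconstrained proof. First I would handle the ``only if'' direction. If $\int |x|dQ = \infty$, then for any $\varphi \in \mathcal{C}_m$ with $\int e^\varphi d\lambda < \infty$ the concavity forces a linear upper bound $\varphi(x) \le C - c|x|$ outside a compact set, so $\int \varphi dQ = -\infty$ and $L(\varphi,Q) = -\infty$. If instead $\mbox{int}(\mbox{csupp}_m(Q))=\emptyset$, then $\mbox{csupp}_m(Q)$ lies in a proper affine subspace $H$, and one can add to a candidate $\varphi$ an affine function vanishing on $H$ and growing on its complement without changing $\int \varphi\, dQ$ while driving $-\int e^\varphi d\lambda$ to $0$, so $L_m(Q) = +\infty$; in either case $L_m(Q)$ is not real.

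For the ``if'' direction and existence, take a maximizing sequence $\varphi_k \in \mathcal{C}_m$; by the pre-theorem computation of $\partial_c L(\phi+c,Q)$ we may normalize $\int e^{\varphi_k}d\lambda = 1$. Using $m \in \mbox{int}(\mbox{csupp}_m(Q))$ and $\int |x|dQ < \infty$, standard log-concave bounds (as in the proof of DSS Theorem~2.2 / Cule-Samworth (2010)) yield a uniform ceiling $\varphi_k(m) \le C_0$ and a uniform linear envelope $\varphi_k(x) \le C_1 - c_1|x-m|$. Helly's selection theorem then produces a subsequence converging pointwise on a dense set, and locally uniformly on the interior of the limit domain, to some concave $\psi_m$. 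The mode constraint $\varphi_k(m) \ge \varphi_k(x)$ for all $x$ passes to the pointwise limit, so $\psi_m \in \mathcal{C}_m$; Fatou applied to $-\int e^\varphi d\lambda$ and the dominated-convergence argument for $\int \varphi\, dQ$ (justified by the linear envelope and $\int |x|dQ < \infty$) give $L(\psi_m,Q) \ge \limsup_k L(\varphi_k,Q) = L_m(Q)$, so $\psi_m$ is a maximizer, and the normalization calculation forces $\int e^{\psi_m}d\lambda = 1$.

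Uniqueness follows from strict concavity of $\varphi \mapsto -\int e^\varphi d\lambda$: two distinct maximizers $\psi_m^{(1)} \neq \psi_m^{(2)}$ in the convex class $\mathcal{C}_m$ would let $(\psi_m^{(1)}+\psi_m^{(2)})/2 \in \mathcal{C}_m$ strictly increase $L$, a contradiction. The inclusions $\mbox{int}(\mbox{csupp}_m(Q)) \subseteq \mbox{dom}(\psi_m) \subseteq \mbox{csupp}_m(Q)$ follow by a perturbation argument: extending $\psi_m$ past $\mbox{csupp}_m(Q)$ cannot raise $\int \varphi\, dQ$ but strictly lowers $-\int e^\varphi d\lambda$, while shrinking $\mbox{dom}(\psi_m)$ strictly below $\mbox{int}(\mbox{csupp}_m(Q))$ sends $\int \varphi\, dQ$ to $-\infty$ since $Q$ charges every open set meeting the interior of its convex support. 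The chief obstacle beyond the unconstrained DSS argument is ruling out degeneration of maximizing sequences compatible with the mode constraint --- in particular, a ``spike'' at $m$ with $\varphi_k(m) \to +\infty$, or collapse onto the boundary of $\mbox{csupp}_m(Q)$; both are excluded by combining the normalization $\int e^{\varphi_k}d\lambda = 1$ with the uniform linear envelope and the hypothesis $m \in \mbox{int}(\mbox{csupp}_m(Q))$, but the quantitative bookkeeping is the technical heart of the proof.
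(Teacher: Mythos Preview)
The paper does not prove this theorem; it states that it is the mode-constrained analogue of Theorem~2.2 of \cite{DSS2011LCreg}, ``proved with methods similar to the methods used in'' that paper, with full details deferred to separate work. Your plan to parallel the DSS argument while checking that the mode constraint survives each limiting step is therefore exactly the intended approach.

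That said, several slips in your sketch come from conflating $\mbox{csupp}_m(Q)$ with the ordinary convex support $\mbox{csupp}(Q)$. First, the hypothesis is only $\mbox{int}(\mbox{csupp}_m(Q))\neq\emptyset$, not $m\in\mbox{int}(\mbox{csupp}_m(Q))$; for instance if $Q$ has support $[0,1]$ and $m=-1$ then $\mbox{csupp}_m(Q)=[-1,1]$ with $m$ on the boundary. The DSS compactness bound still works because a spike $\varphi_k(m)\to\infty$ under the normalization $\int e^{\varphi_k}d\lambda=1$ forces $\varphi_k\to-\infty$ on any set bounded away from $m$, and $Q$ must charge some such set whenever $\mbox{int}(\mbox{csupp}_m(Q))\neq\emptyset$. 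Second, $Q$ need \emph{not} charge every open subset of $\mbox{int}(\mbox{csupp}_m(Q))$ (only those of $\mbox{int}(\mbox{csupp}(Q))$), so your argument for the inclusion $\mbox{int}(\mbox{csupp}_m(Q))\subseteq\mbox{dom}(\psi_m)$ is incomplete; you need to combine $\mbox{int}(\mbox{csupp}(Q))\subseteq\mbox{dom}(\psi_m)$ with the fact that $m\in\mbox{dom}(\psi_m)$ (since $\psi_m(m)=\max\psi_m>-\infty$) and the convexity of $\mbox{dom}(\psi_m)$. Third, an affine function vanishing on a hyperplane $H$ changes sign across $H$ rather than ``growing on its complement''; the correct perturbation is a concave tent such as $x\mapsto c-t\,\mathrm{dist}(x,H)-\epsilon|x-m|$, which lies in $\mathcal{C}_m$ because $m\in H$, keeps $\int\varphi\,dQ$ fixed, and drives $\int e^{\varphi}d\lambda\to 0$ as $t\to\infty$.
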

\medskip

Theorem~\ref{DSS-thm2.2m} justifies rigorously the definition given in \eqref{eq:defn:fm0}, since $f_0 \in {\cal P}$ has a finite mean and is non-degenerate.
Now, for $\psi_m \in {\cal C}_m $, let
\begin{eqnarray*}
  {\cal S}(\psi_m) =
  \{ x \in \mbox{dom} ( \psi_m ) : \ \psi_m (x) > 2^{-1} \left ( \psi_m (x- \delta) + \psi_m (x+ \delta) \right ) \ \ \mbox{for all} \ \delta > 0 \} ,
\end{eqnarray*}
and
\begin{align*}
  {\cal S}_L(\psi_m) &= \lb x \in {\cal S}(\psi_m) : \psi_m'(x-) > 0  \rb  \subseteq (-\infty, m], \\
  {\cal S}_R(\psi_m) &= \lb x \in {\cal S}(\psi_m) : \psi_m'(x+) < 0  \rb  \subseteq [m, \infty).
\end{align*}
It is possible for $m$ to be an element of either of the sets ${\cal S}_L(\psi_m)$ and
${\cal S}_R(\psi_m)$ without being a member of the other.
The following theorem is the population analogue of Theorem 2.10 of \cite{Doss-Wellner:2016ModeConstrained}.
\medskip

\begin{theorem}
  \label{DSS-thm2.7m}
  Let $Q$ be a non-degenerate distribution on $\RR$ with finite first moment and distribution
  function $G$.  Let $F_m$ be a distribution function with log-density $\varphi_m \in {\cal C}_m$.
  Then $\varphi_m = \psi( \cdot | Q)$ if and only if
  \begin{eqnarray}
    && \int_{- \infty}^x F_m (y) dy \le \int_{-\infty}^x G (y) dy \ \ \ \mbox{for all} \ \ x \le m, \ \ \mbox{and} \label{eq:pop-charzn-L} \\
    &&  \int_{x}^\infty (1-F_m (y)) dy \le \int_x^{\infty} (1-G (y)) dy  \ \ \ \mbox{for all} \ \ x \ge m , \label{eq:pop-charzn-R}
  \end{eqnarray}
  with equality in \eqref{eq:pop-charzn-L} if $x \in {\cal S}_L(\varphi_m)$ and equality in
  \eqref{eq:pop-charzn-R} if $x \in {\cal S}_R(\varphi_m)$.
\end{theorem}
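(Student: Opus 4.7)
I would prove Theorem~\ref{DSS-thm2.7m} via the variational characterisation of a strictly concave optimisation over a convex set. Since $\int e^{\varphi_m}\,d\lambda = 1$ by Theorem~\ref{DSS-thm2.2m}, the Gateaux derivative of $L(\varphi, Q) = \int \varphi\,dQ - \int e^\varphi\,d\lambda + 1$ at $\varphi_m$ in direction $\Delta$ is $\dot L_{\varphi_m}(\Delta) = \int \Delta\,d(Q - F_m)$. By (strict) concavity of $L$, $\varphi_m$ maximises $L$ on $\mathcal{C}_m$ iff $\dot L_{\varphi_m}(\Delta) \le 0$ for every $\Delta$ such that $\varphi_m + t\Delta \in \mathcal{C}_m$ for all small $t \ge 0$.

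\textbf{Necessity.} For $x \le m$ take $\Delta_x(y) := -(x-y)_+$, which is concave, non-positive, and vanishes for $y \ge x$. One checks that $\varphi_m + t\Delta_x \in \mathcal{C}_m$ for every $t \ge 0$: the added slope drops by $-t$ at $x$ (so the total slope remains non-increasing), and the mode stays at $m$ because $\Delta_x(m) = 0$ and $\Delta_x \le 0$. Fubini gives $\int (x-y)_+\,dH(y) = \int_{-\infty}^x H(s)\,ds$ for any distribution function $H$, whence
\begin{equation*}
  0 \;\ge\; \dot L_{\varphi_m}(\Delta_x) \;=\; \int_{-\infty}^x (F_m - G)(s)\,ds,
\end{equation*}
which is exactly \eqref{eq:pop-charzn-L}; the symmetric choice $\Delta^x(y) := -(y-x)_+$ for $x \ge m$ yields \eqref{eq:pop-charzn-R}. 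For $x \in \mathcal{S}_L(\varphi_m)$, I would show one can additionally perturb infinitesimally in the opposite direction: in the piecewise-linear case a strict slope jump of $\varphi_m$ at a knot $x$ absorbs a small convex bump $t(x-y)_+$; in the smooth continuous case the same outcome is obtained via a family $\Delta^\delta$ of smoothed bumps with second derivative $(1/\delta)\mathbf{1}_{[x, x+\delta]}$, which is admissible provided $t \lesssim \delta \cdot (-\varphi_m''(x))$, and taking $\delta \downarrow 0$ produces a limiting first-order inequality in the direction $-\Delta_x$. Combined with the inequality already obtained, this forces equality in \eqref{eq:pop-charzn-L} at $x$; symmetric reasoning treats $\mathcal{S}_R(\varphi_m)$.

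\textbf{Sufficiency.} Conversely, suppose the stated inequalities hold with equality on $\mathcal{S}_L$ and $\mathcal{S}_R$. For any $\varphi \in \mathcal{C}_m$ with $\int e^\varphi\,d\lambda < \infty$, convexity of $e^{\,\cdot\,}$ gives $L(\varphi, Q) - L(\varphi_m, Q) \le \int (\varphi - \varphi_m)\,d(Q - F_m)$, so it suffices to show the right side is $\le 0$. Splitting the integral at $m$ and integrating by parts twice on each half, using the signed second-derivative measure of $\varphi - \varphi_m$, rewrites it as
\begin{equation*}
  \int_{(-\infty, m]} \Bigl(\int_{-\infty}^x (F_m - G)(s)\,ds\Bigr) d\mu_L(x)
  \;+\;
  \int_{[m,\infty)} \Bigl(\int_x^\infty (G - F_m)(s)\,ds\Bigr) d\mu_R(x),
\end{equation*}
where $\mu_L = -(\varphi - \varphi_m)''$ on $(-\infty, m]$ and $\mu_R = -(\varphi - \varphi_m)''$ on $[m, \infty)$; boundary terms at $\pm\infty$ vanish using $\int |y|\,dQ < \infty$ and the exponential tails of $f_m$, and boundary terms at $m$ cancel between the two halves. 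The negative part of $\mu_L$ is supported on $\{\varphi_m'' < \varphi''\}$, a subset of $\mathcal{S}(\varphi_m) \cap (-\infty, m]$; since $\varphi_m$ is non-decreasing on $(-\infty, m]$, this set agrees with $\mathcal{S}_L(\varphi_m)$ off a possible point at $m$ (absorbed by the boundary term), and there the assumed equality makes the integrand vanish. Elsewhere $\mu_L \ge 0$ and \eqref{eq:pop-charzn-L} makes the integrand non-positive, so the first integral is $\le 0$; symmetric reasoning handles the second.

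\textbf{Main obstacle.} The most delicate step is the equality-at-support-points half of necessity in the smooth case: one cannot add a raw $(x-y)_+$ bump (it would break concavity at $x$) and must engineer a smoothed two-sided perturbation whose convex part is absorbed by the local strict concavity of $\varphi_m$. The remaining work (sign bookkeeping for $\mu_L$ and control of boundary terms in the integration by parts) is routine.
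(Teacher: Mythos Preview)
The paper does not actually prove Theorem~\ref{DSS-thm2.7m}: it states the result, remarks that the proof uses methods similar to those of \cite{DSS2011LCreg} for the unconstrained analogue, and defers full details to a separate work. Your variational approach via the Gateaux derivative of $L(\cdot,Q)$ is exactly that route, so at the level of strategy there is nothing to compare.

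Your argument has one genuine gap, precisely where you flag it as delicate. For the equality half of necessity, your smoothed perturbation $\Delta^\delta$ with $(\Delta^\delta)'' = (1/\delta)\mathbf{1}_{[x,x+\delta]}$ is admissible only when $-\varphi_m'' \ge t/\delta$ on $[x,x+\delta]$; you invoke $-\varphi_m''(x)>0$, but $x \in \mathcal{S}(\varphi_m)$ only says that $x$ lies in the \emph{support} of the curvature measure $\nu_m := -\varphi_m''$. It does not guarantee that $\nu_m$ has a density, let alone a positive one at $x$ (e.g.\ $\varphi_m(y) = -|y-x|^{5/2}$ locally gives $\varphi_m''(x)=0$, and $\nu_m$ could also be purely singular near $x$). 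The fix is to build $\Delta^\delta$ from $\nu_m$ itself: set $(\Delta^\delta)'' := c_\delta^{-1}\,\nu_m|_{[x-\delta,x+\delta]}$ with $c_\delta = \nu_m([x-\delta,x+\delta])>0$, which is exactly what $x\in\mathcal{S}(\varphi_m)$ provides. Then $-(\varphi_m+t\Delta^\delta)'' = \nu_m - (t/c_\delta)\nu_m|_{[x-\delta,x+\delta]}\ge 0$ for all $t\in[0,c_\delta]$, and $(\Delta^\delta)''$ has unit mass so $\Delta^\delta\to (x-\cdot)_+$ as $\delta\downarrow 0$. The mode constraint for small $t$ then follows from $\varphi_m'(x-)>0$ just as in your piecewise-linear case.

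A smaller point in sufficiency: the boundary contributions at $m$ do not literally cancel. After two integrations by parts one is left with $h(m)\bigl[(\varphi-\varphi_m)'(m-)-(\varphi-\varphi_m)'(m+)\bigr]$, where $h(m)=\int_{-\infty}^m(F_m-G)\le 0$. If the bracket is $\ge 0$ the term is $\le 0$; if it is $<0$ then necessarily $\varphi_m'(m-)>\varphi_m'(m+)$, so $m\in\mathcal{S}_L(\varphi_m)\cup\mathcal{S}_R(\varphi_m)$ and the assumed equality forces $h(m)=0$. Either way the term has the right sign, but this needs an argument rather than a cancellation.
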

\medskip

Thus, again much as in \cite{DSS2011LCreg}, for $x \in {\cal S} ( \psi_m (\cdot  | Q)) $, $x \le m$, and (small) $\delta>0$,
\begin{eqnarray*}
  && 0 \le \frac{1}{\delta} \int_{x-\delta}^x (F_m (y) - G(y)) dy \rightarrow F_m (x) - G(x-) \ \ \mbox{as} \ \delta\searrow 0, \\
  && 0 \ge \frac{1}{\delta} \int_x^{x+\delta} F_m (y) - G(y)) dy \rightarrow F_m (x) - G(x) \ \ \mbox{as} \ \ \delta\searrow 0,
\end{eqnarray*}
and hence $G(x-) \le F_m (x) \le G(x)$ for all $x \in \RR$.

Now we need to understand the properties of the maps $Q \mapsto L_m (Q)$ and $Q \mapsto \psi_m( \cdot | Q)$
on ${\cal Q}_1 \cap {\cal Q}_{0,m}$,
where we let ${\cal Q}_1  = \lb Q  : \int | x | dQ < \infty \rb$
and ${\cal Q}_{0,m} = \lb Q : \mbox{int} (\mbox{csupp}_m (Q)) \ne  \emptyset \rb$.
As in \cite{DSS2011LCreg} we show that these are both continuous with respect to Mallows
distance $D_1$:
$$
D_1 (Q, Q') \equiv \inf_{(X,X')} E | X - X' |
$$
where the infimum is taken over all pairs $(X,X')$ of random variables $X \sim Q$ and $X' \sim Q'$ on a common
probability space.  Convergence of $Q_n$ to $Q$ in Mallows distance is equivalent to
having $\int |x| dQ_n \to \int |x| dQ$ and $Q_n \Rightarrow Q$
\citep{Mallows:1972vy}.
\medskip

\begin{theorem}
  \label{DSS-thm2.15m}
  Let $\{ Q_n \}$ be a sequence of distributions on $\RR^d$ such that $D_1 (Q_n , Q) \rightarrow 0$
  for some $Q \in {\cal Q}_1$.  Then
  $$L_m (Q_n )\rightarrow L_m(Q).$$
  If $Q \in {\cal Q}_{0,m} \cap {\cal Q}_1$ the probability densities
  $f^0 \equiv \exp ( \psi_m (\cdot | Q))$ and
  $f_n^0 \equiv \exp ( \psi_m (\cdot | Q_n))$
  are well-defined for large $n$ and satisfy
  \begin{eqnarray*}
    \lim_{n \rightarrow \infty, x \rightarrow y} f_n^0 (x) & = & f^0 (y) \ \ \mbox{for all} \ \ y \in \RR^d \setminus \partial \{ f^0 > 0 \}, \\
    \limsup_{n\rightarrow \infty , x \rightarrow y} f_n^0 (x) & \le &f^0(y) \ \ \mbox{for all} \ \ y \in \partial \{ f^0 > 0 \} , \\
    \int | f_n^0 (x) - f^0 (x) | dx & \rightarrow & 0 .
  \end{eqnarray*}
\end{theorem}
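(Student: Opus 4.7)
The plan is to follow the approach of Dümbgen, Samworth and Schuhmacher (2011), Theorem 2.15, adapted to incorporate the mode constraint $m \in MI(\varphi)$. Throughout I would exploit the characterization of Mallows-$D_1$ convergence: one can realize $X_n \sim Q_n$ and $X \sim Q$ on a common probability space with $E|X_n - X| \to 0$, and in particular $Q_n \Rightarrow Q$ together with $\int |x|\,dQ_n \to \int |x|\,dQ$.  A central auxiliary tool is the universal envelope bound for log-concave densities with controlled first moments: any $f \in {\cal P}$ with $\int |x|\,f\,d\lambda \le K$ is uniformly bounded above and decays at least exponentially off a compact set determined by $K$ alone.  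This provides tightness and uniform integrability of $\{f_n^0\}$ once we know $\{Q_n\}$ has uniformly bounded first moments.

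First I would establish $L_m(Q_n) \to L_m(Q)$ by separately proving the $\liminf$ and $\limsup$ inequalities.  For the $\liminf$ direction, when $L_m(Q) \in \RR$ I would use $\psi := \psi_m(\cdot|Q)$ itself as a test function in $L(\cdot, Q_n)$: since $\psi$ is concave with $\int e^{\psi}\,d\lambda = 1$ and $\psi^{+}$ has at most linear growth, combining weak convergence with first-moment convergence yields $\int \psi\,dQ_n \to \int \psi\,dQ$, and the $\int e^{\psi}\,d\lambda$ term is unaffected, giving $L(\psi, Q_n) \to L_m(Q)$; the case $L_m(Q) = -\infty$ is handled by truncation producing test functions with arbitrarily large $L_m(Q_n)$.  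The $\limsup$ direction is more delicate: using the envelope bound, the concave functions $\psi_m(\cdot|Q_n)$ are uniformly bounded on compact subsets of $\mbox{int}(\mbox{csupp}_m(Q))$, so Helly's selection theorem extracts a pointwise limit $\tilde\psi$ on a dense set, extending to a concave element of ${\cal C}_m$ by concavity.  Dominated convergence, together with uniform integrability controlled by the envelope, then yields $\limsup_n L(\psi_m(\cdot|Q_n), Q_n) \le L(\tilde\psi, Q) \le L_m(Q)$.

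Having $L_m(Q_n) \to L_m(Q)$ in hand, the existence of $f_n^0$ for large $n$ follows from $Q \in {\cal Q}_{0,m}$ and weak convergence, which forces $Q_n \in {\cal Q}_{0,m}$ for large $n$.  For the convergence of $f_n^0$, the key observation carried over from the $\limsup$ argument is that along every subsequence one can extract a further subsequence with $\psi_m(\cdot|Q_{n_k}) \to \tilde\psi$ pointwise on $\mbox{int}(\mbox{dom}\,\tilde\psi)$ and $\tilde\psi \in {\cal C}_m$.  Passing the characterization of Theorem~\ref{DSS-thm2.7m} to the limit (the integrated upper- and lower-tail inequalities with equality on ${\cal S}_L$ and ${\cal S}_R$) shows that the distribution function $\tilde F$ associated with $e^{\tilde\psi}$ satisfies the same characterization against $G$, and uniqueness in Theorem~\ref{DSS-thm2.2m} forces $\tilde\psi = \psi_m(\cdot|Q)$.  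Since every subsequence has the same limit, the full sequence converges, which yields both pointwise continuity on $\RR^d \setminus \partial\{f^0 > 0\}$ and upper-semicontinuity on the boundary by standard properties of pointwise limits of concave functions on the interior of their effective domains.  The $L^1$ convergence then follows from Scheffé's lemma using $\int f_n^0\,d\lambda = 1 = \int f^0\,d\lambda$.

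The main obstacle is the transfer of the characterization in Theorem~\ref{DSS-thm2.7m} to the limit under the mode constraint: one must guarantee that $\tilde\psi$ retains $m$ as a mode and that the equality pattern on ${\cal S}_L(\tilde\psi)$ and ${\cal S}_R(\tilde\psi)$ is inherited rather than disrupted.  Because these sets can behave discontinuously in $n$, a direct pointwise passage to equality is not available; instead one works with the integrated inequalities at a dense set of $x$-values, uses monotonicity of the integrated distribution functions to pass inequalities to the limit, and then exploits that equality in the integrated relations at boundary points of ${\cal S}_L(\tilde\psi)$ or ${\cal S}_R(\tilde\psi)$ already forces equality throughout the corresponding closed intervals, by concavity of $\tilde\psi$.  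A secondary technical point is the matching of the inclusions $\mbox{int}(\mbox{csupp}_m(Q)) \subseteq \mbox{dom}(\psi_m) \subseteq \mbox{csupp}_m(Q)$ across $n$, which must be handled by a uniform-in-$n$ choice of compact approximating sets.
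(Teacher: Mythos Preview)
The paper does not actually give a proof of Theorem~\ref{DSS-thm2.15m}; it states that the result is a mode-constrained analogue of Theorem~2.15 of \cite{DSS2011LCreg}, that it is ``proved with methods similar to the methods used in'' that paper, and defers the full argument to a separate work. Your proposal follows exactly that template, so at the level of overall strategy you are aligned with what the paper indicates.

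Two technical points deserve attention. First, in the $\liminf$ direction you propose to plug $\psi := \psi_m(\cdot \mid Q)$ directly into $L(\cdot, Q_n)$ and claim $\int \psi\,dQ_n \to \int \psi\,dQ$. This can fail: $D_1$-convergence does not prevent $Q_n$ from placing mass outside $\mbox{dom}(\psi)$, in which case $\int \psi\,dQ_n = -\infty$ and the bound is vacuous. The standard repair (as in \cite{DSS2011LCreg}) is to approximate $\psi$ from above by everywhere-finite elements of ${\cal C}_m$, for instance finite minima of supporting affine functions of $\psi$; including the constant function $\ell_0 \equiv \psi(m)$ among them guarantees that $m$ remains in the modal interval of each approximant.

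Second, your identification of the subsequential limit $\tilde\psi$ via Theorem~\ref{DSS-thm2.7m} is an unnecessary detour and, since that characterization is stated only on $\RR$, does not cover the $\RR^d$ generality of Theorem~\ref{DSS-thm2.15m}. The direct route is shorter: once $L_m(Q_n) \to L_m(Q)$ is in hand, the $\limsup$ step already yields $L(\tilde\psi, Q) \ge \lim_k L_m(Q_{n_k}) = L_m(Q)$, while $\tilde\psi \in {\cal C}_m$ gives the reverse inequality; uniqueness in Theorem~\ref{DSS-thm2.2m} then forces $\tilde\psi = \psi_m(\cdot \mid Q)$ without any appeal to the one-dimensional characterization. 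This is precisely how \cite{DSS2011LCreg} argue in the unconstrained case.
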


We can now show that the third term of \eqref{eq:TLLR-alt-3terms} is $o_p(1)$ under the assumptions of Theorem~\ref{LR-limit-FixedAlternative}.
First note that $\PP_n $ converges weakly to $P$, the measure corresponding to $f_0 \in {\cal P}$, with
probability $1$ and $\PP_n (|x|) = n^{-1} \sum_{i=1}^n |X_i |  \rightarrow_{a.s.} \int |x| dP(x)$ by the strong law of
large numbers.  Thus $D_1 (\PP_n ,P) \rightarrow_{a.s.} 0$.
It follows from Theorem~\ref{DSS-thm2.15m} that
\begin{eqnarray*}
  \log \widehat{f}_n^0
  & = & \mbox{argmax} _{\varphi \in {\cal C}_m} \left \{ \PP_n \varphi - \int e^{\varphi} d \lambda +1 \right \} \\
  & = & \psi_m (\cdot | \PP_n ) = \widehat{\varphi}_n^0
\end{eqnarray*}
where, by the last part of Theorem~\ref{DSS-thm2.15m}, $\int | \widehat{f}_n^0 - f_m^0 | d \lambda \rightarrow_{a.s.} 0$
and by the continuity
\begin{eqnarray}
  L_m (\PP_n)
  & = & L( \psi_m ( \cdot | \PP_n ) , \PP_n ) = \PP_n \log \widehat{f}_n^0 \nonumber \\
  & \rightarrow_{a.s.} & L ( \psi_m ( \cdot  | P), P) =  P( \log f_m^0  ).  \label{CruxContinuity}
\end{eqnarray}
But then
\begin{eqnarray*}
  \PP_n \log \frac{\widehat{f}_n^0 }{f_m^0 }
  & = & \PP_n \log \widehat{f}_n^0 - P \log f_m^0 - (\PP_n \log f_m^0 - P\log f_m^0 ) \\
  & \rightarrow_{a.s.} & 0 - 0 = 0
\end{eqnarray*}
by (\ref{CruxContinuity}) for the first term and the strong law of large numbers for the second term  (using that $-\infty < L_m(P) < \infty$ by Theorem~\ref{DSS-thm2.2m}).

\section{Simulations:  some comparisons and examples}
\label{sec:Illustration-Simul}
\subsection{Monte Carlo estimates of the distribution of $\ \DD$}
\label{subsec:DD}
To implement our likelihood ratio test and the corresponding new confidence intervals,
we first  conducted a Monte-Carlo study  of the null-hypothesis limit random variable, $\DD$.
We did this by simulating $M = 5\times 10^3$ samples of $n = 10^4$ from the following distributions satisfying the
    key hypothesis ($\varphi_0^{\prime\prime} (m)< 0$) of Theorem~\ref{LRasympNullDistribution}:
    Gamma$(3,1)$, Beta$(2,3)$,  Weibull$(3/2,1)$.
    The results are shown in
    Figure~\ref{fig:figure1}.
    Figure~\ref{fig:figure1} %
also includes:
(i) a plot of the known d.f. of a chi-square random variable with $1$ degree of freedom,
(which is the limiting distribution of the likelihood ratio test statistic for testing a one-dimensional parameter in a regular
parametric model);
(ii) a plot of the empirical distribution of
$2 \log \lambda_n$ for $M = 5\times 10^3$ samples of size $n=10^4$
drawn drawn from the Laplace density $2^{-1} \exp ( - | x |)$ for which the assumption of Theorem 1 fails.
In keeping with Theorem~\ref{LRasympNullDistribution}, the empirical results for all the distributions
satisfying Theorem~\ref{LRasympNullDistribution} are tightly clustered and in fact are almost visually indistinguishable,
in spite of the fact that the various constants associated with these distributions
are quite different, as shown in Table 1; in the next to last column
$C(f_0) \equiv \left ((4!)^2 f_0 (m)/ (f_0^{\prime\prime} (m))^2) \right )^{1/5}$ from \eqref{LimitDistribMLEofMode}, and in the last column SLC stands for
``strongly log-concave'' (see e.g.\
\cite{MR3290441}).

\begin{table}[h]
  \begin{center}
    \caption{Numerical characteristics of the distributions in the null hypothesis Monte-Carlo study}
    \label{tab:1}
    \medskip
    \begin{tabular}{| l | c | c  | c  | c|  c | r |}  \hline
      distribution& $m$ & $f_0 (m)$ & $f_0^{\prime \prime} (m)$ & $\varphi_0^{\prime\prime} (m)$ & $C(f_0)$ & SLC \\ \hline
      \ & \ & \ & \ & \ & \ & \ \\
      N(0,1) & $0$ & $(2\pi)^{-1/2} = .3989\ldots$ & $-(2\pi)^{-1/2}$ & $-1$ & $4.28$ & Y \\
      \ & \ & \ & \ & \ & \ & \ \\
      Gamma$(3,1)$ & $2$ & $2e^{-2}$ & $-e^{-2}$ & $-1/2$ & $6.109$ & Y \\
      \ & \ & \ & \ & \ & \ & \ \\
      Weibull$(3/2,1)$ & $3^{-2/3}$ & $\frac{3^{2/3}}{2 e^{1/3}}$ & $- \frac{27}{8 e^{1/3}}$ & $ - \frac{9\cdot 3^{1/4}}{4}$ & $2.36$ & N \\
      \ & \ & \ & \ & \ & \ & \ \\
      Beta$(2,3)$ & $3^{-1}$ & $\frac{16}{9} $ & $-24$ & $- \frac{27}{2} $ & $1.12$ & Y \\
      \ & \ & \ & \ & \ & \ & \ \\
      Logistic & $0$ & $1/4$ & $-1/8$ & $-1/2$ & $6.207$ & N \\
      \ & \ & \ & \ & \ & \ & \ \\
      Gumbel & $0$ & $e^{-1}$ & $-e^{-1}$ & $-1$ & $4.3545 $ & N \\
      \ & \ & \ & \ & \ & \ & \ \\
      $\chi_4^2 $ & $2$ & $\frac{1}{2 e}$ & $- \frac{1}{8 e}$ & $-\frac{1}{4} $ & $8.7091$ & N \\
      \ & \ & \ & \ & \ & \ & \ \\
      \hline
    \end{tabular}
  \end{center}
\end{table}

\begin{figure}%
  \centering
  \includegraphics[width=.9\textwidth]{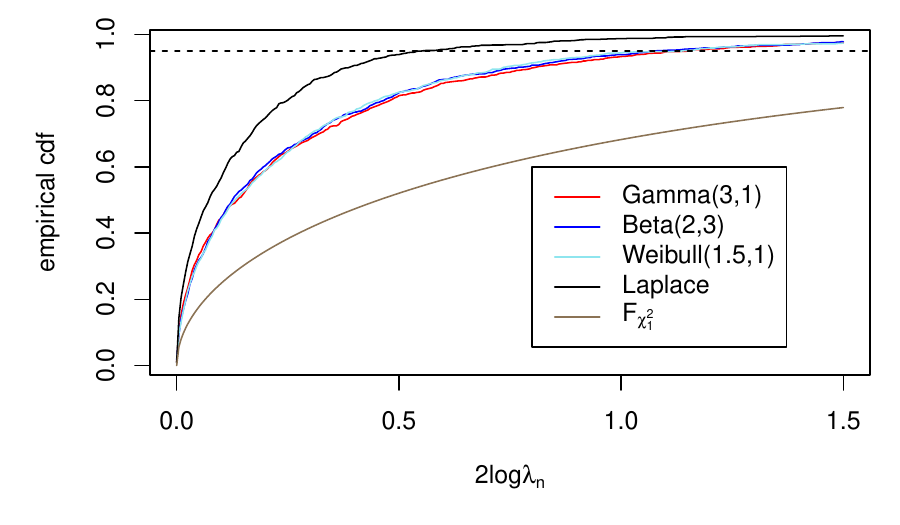}
  \caption{Monte Carlo distributions of $2 \log \lambda_n$ for four distributions,  $n=10^4$, $M = 5\times 10^3$ replications, together with the exact distribution function of $\chi_1^2$.}
  \label{fig:figure1}
\end{figure}

Now for $\alpha \in (0,1)$ let $d_{\alpha}$  satisfy $P ( \DD  > d_{\alpha} ) = \alpha $.
The following table gives
\begin{table}%
  \begin{center}
    \caption{Estimated critical values $d_{\alpha}$}
    \label{tab:2}
    \medskip
    \begin{tabular}{| l | c | c  | c  | c|  c | c |}  \hline
      $\alpha $ & $.25 $ & $.20$ & $.15$ & $.10$ & $.05$ & $.01$ \\   \hline
      $d_{\alpha}$  & $.40$ & $.49$  & $.61$  & $.79$ & $1.11$ & $1.92$  \\ \hline
    \end{tabular}
  \end{center}
\end{table}
a few estimated values for $d_{\alpha}$:
These are based on $350,000$ Monte Carlo simulations each based on simulating
$1 \times 10^6$ observations from a standard normal.  These values, and the
simulated critical values for all $\alpha \in (0,1)$, are   available
in the \verb+logcondens.mode+ package  \citep{doss:logcondens.mode}
in R \citep{R-core}.

\cite{MR1891743} study  a likelihood ratio test in the context of constraints based on monotonicity,
and find a universal limiting distribution, denote it $\DD_{\text{mono}}$, for their likelihood ratio test.
Comparison of the values in Table~\ref{tab:2} with Table~2 of
\cite{MR1891743} (particularly Method 2 in column 3 of that table) suggest,  perhaps surprisingly,
that %
$P( 2 \DD \le t) \approx P( \DD_{\text{mono}} \le t)$ for $ t \in \RR$.
It would be quite remarkable if this held exactly.
We do not have any explanation for this observed phenomenon.

\subsection{Comparisons via simulations}

Code to compute the mode-constrained log-concave MLE, implement a corresponding
test, and invert the family of tests to form confidence intervals is
available in the \verb+logcondens.mode+ package
\citep{doss:logcondens.mode}.
We can thus test our procedure and compare it to alternatives.

\cite{MR964293} %
proposed and investigated two  methods of forming confidence
intervals for the mode of a unimodal density.  His estimators of the mode
and confidence intervals were based on the classical kernel density
estimators of the density $f$ going back to \cite{MR0143282}.   %
One method, which Romano called the ``normal approximation method'',
is
based on the limiting normality of the kernel density estimator of the mode,
together with a plug-in estimator of the asymptotic variance.
Romano's second method involved bootstrapping the mode estimator,
and involved the choice of two bandwidths, one for the initial estimator
to determine the mode, and a second (larger) bandwidth for the bootstrap
sampling.
The abstract of
\cite{MR964293}  %
states:
``In summary, the results are negative in the sense that a straightforward application of a naive bootstrap
yields invalid inferences.  In particular the bootstrap fails if resampling is done from the kernel density estimate.''  That is, one must use a second (larger) bandwidth for the bootstrap resampling to achieve valid inference.  This thus necessitates selection of two tuning parameters for the bootstrap procedure.
\cite{MR964293}  %
notes in summarizing his simulation results:

\begin{quotation}
  ... but the problem of constructing a confidence interval for the mode for smaller sample sizes remains a challenging one.  In summary, the simulations reinforce the idea that generally automatic methods like the bootstrap need mathematical and numerical justification before their use can be recommended.
\end{quotation}

The bootstrap simulations that
\cite{MR964293}  %
refers to in the previous quote are based on an underlying $N(0,1)$ or a $\chi_4^2$ distribution with a sample size of $n=100$.
\cite{MR964293}  %
also performs simulations for the normal approximation method for the same underlying distributions and based on the same sample size.  For the normal approximation method,
a grid of bandwidths $h$ are used for the simulation.  For the bootstrap, a matrix of bandwidth pairs $(h,b)$ (one for estimation, one for resampling) are used.
Monte Carlo estimates of  coverage probabilities
are presented in Tables 1--4 of
\cite{MR964293}.  %

In Figure~\ref{fig:figure5} we consider the case of a true underlying $\chi_4^2$ distribution, and we plot all the estimated coverage probabilities of Romano's bootstrap CI's (blue; these are from \cite{MR964293}'s Table 4)  together with the target (ideal) coverage (green line) and the estimated coverage probabilities  of our likelihood ratio (LR) based CI's (magenta).  As can be seen, the estimated coverage probabilities of our LR-based procedure are reasonably close to the target values without requiring any bandwidth choice.

Corresponding comparison plots based on Tables 1,2, and 3 of
\cite{MR964293},  %
as well as tables of the simulated coverage probabilities,
are given in
\cite{Doss-Wellner:2016ModeInference-arxiv-v2}.  We do not include them here due to space constraints.
\cite{Doss-Wellner:2016ModeInference-arxiv-v2} also includes a Monte Carlo simulation study of lengths of the CI's in some settings.

\begin{figure}%
  \centering
\centerline{\includegraphics[height=2.8in]{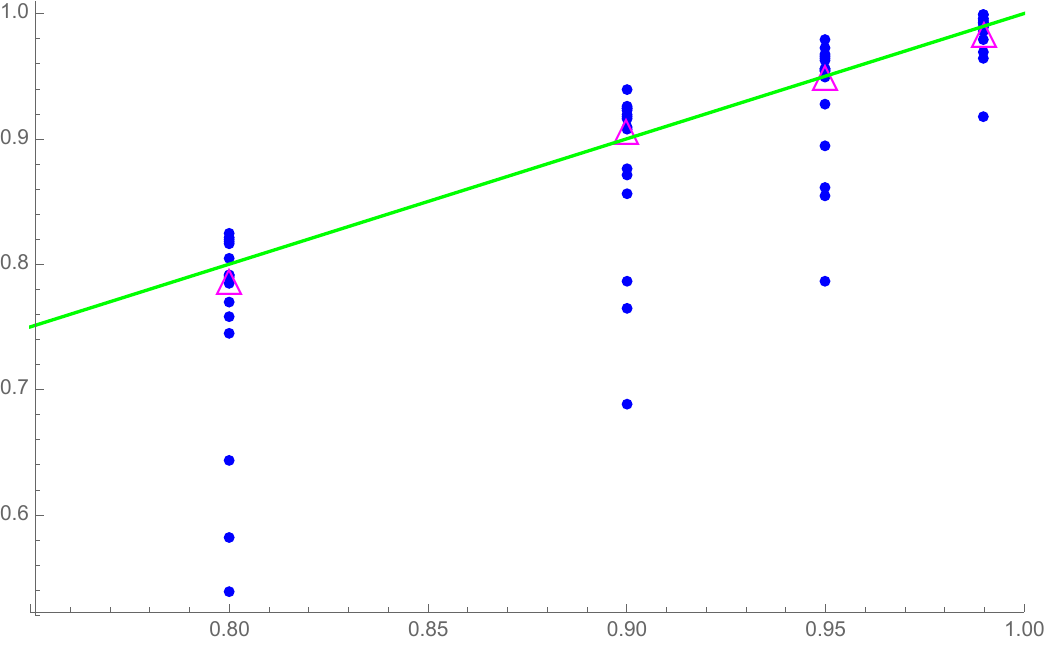}}
  \caption{Coverage probabilities, Romano's Table 4 compared with LR coverage probabilities, $\chi_4^2$ data,
    Bootstrap confidence intervals (blue dots);
    LR confidence intervals  (magenta triangles).  The green line is the nominal level.}
  \label{fig:figure5}
\end{figure}

Methods of bandwidth selection for various problems have received considerable attention in the period since
\cite{MR964293};  %
see especially
\cite{MR1089472},   %
\cite{MR1450020},    %
\cite{MR1049312},   %
\cite{MR1160479},   %
\cite{MR1839003},  %
\cite{MR1394655},  %
\cite{MR1618191},   %
and
\cite{MR2662359}.   %
Although bandwidth selection in connection with mode estimation is mentioned briefly by
\cite{MR1089472} (see their last paragraph, page 734), we are not aware of any specific proposal
or detailed study of bandwidth selection methods in the problem of confidence intervals for the mode of
a unimodal density.
For this reason, we have not undertaken a full comparative study of possible methods here.
Further comparisons of our LR based confidence intervals with methods based on kernel density estimates of the
type studied by \cite{MR964293} but incorporating current state of the art bandwidth selection procedures will be of interest.

\subsection{Comparisons via data examples}
\label{subsec:examples}

We used our procedure for formation of modal confidence intervals (CI's) on two real data sets,
the rotational velocities of stars from the Bright Star Catalogue  \citep{HW1991brightstar}  and  daily log returns for the S\&P $500$ stock  market index.
To see the former,
see Section~5.3 of \cite{Doss-Wellner:2016ModeInference-arxiv-v2}.  Here we discuss the
$1006$ daily log returns for the S\&P $500$ stock
market index from January $1$, 2003 to December $29$, 2006.
In
Figure~\ref{fig:SP500_2002_KDE-LC} we plot the data, a
kernel density estimate with bandwidth
 $.13$ \citep{Sheather:1991tp}, the log-concave MLE, and the $95$\%
confidence interval for the mode given by our likelihood ratio statistic.
We also plot the maximum likelihood Gaussian
density estimate, for comparison.
The sample mean is $0.04$,  the sample median is $0.081$, and the log-concave mode estimate is $0.17$.  A  $95$\% CI's  for the mean
is
$[-0.004, 0.09]$ and
a $95$\% CI for the median is
$[.037, .122]$,
 \citep[pages 539--540]{MR0251823}.
Our likelihood ratio CI for the mode is $[0.10, 0.21]$.
Note that our confidence interval for the mode excludes $0$ and does not intersect with the CI for the mean.  Thus, our procedure highlights some interesting features of the data and provides evidence for its non-normality.
Also note that the lengths of the mean, median, and our LR-based mode CI are
$0.094$,
$0.085$,
and $0.11$.  Thus, despite the fact that our  mode estimator does not generally have a $n^{-1/2}$ rate of convergence, the three confidence intervals are of fairly similar length on a dataset with  $1006$ observations, which is encouraging for our mode CI procedure and for any future extensions (e.g., mode regression CI's).
\begin{figure}
  \centering
  \includegraphics[scale=.40]{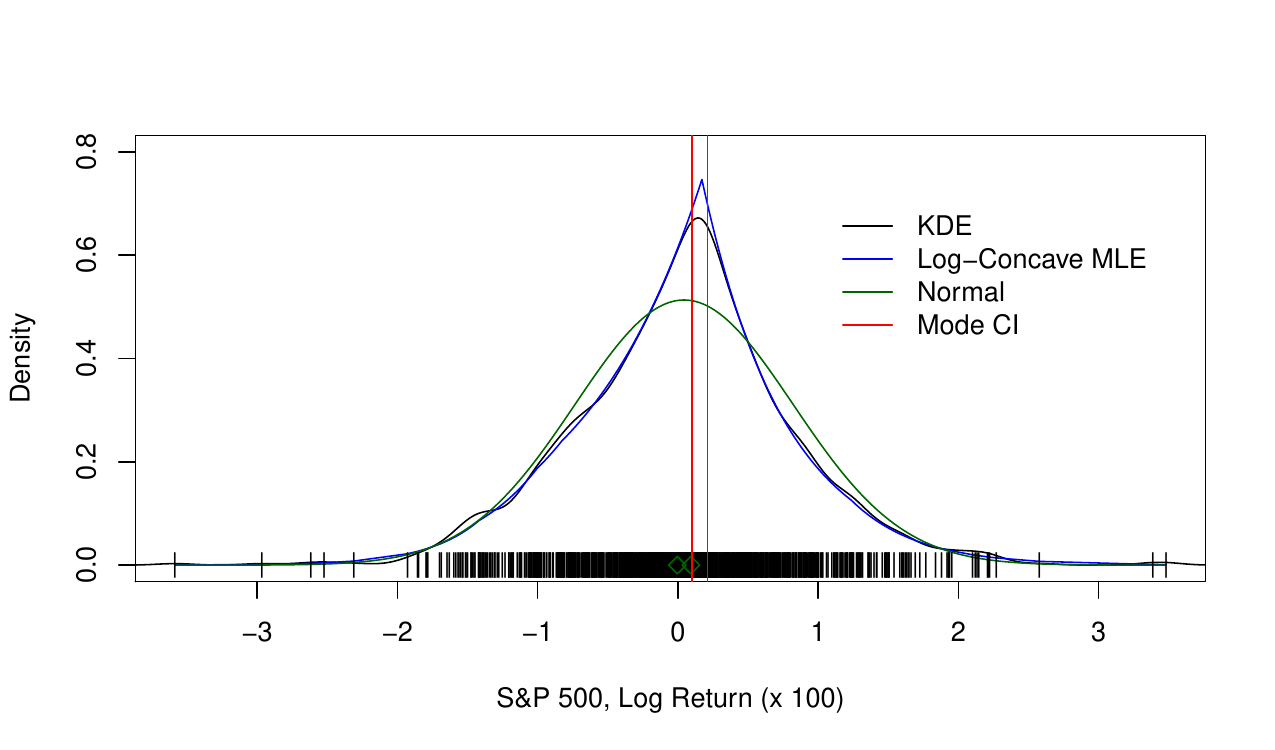}
  \caption[S\&P $500$ for years $2003$-$2006$]{$1006$ S\&P $500$ daily log
  returns for the years $2003$--$2006$}
  \label{fig:SP500_2002_KDE-LC}
\end{figure}

\section{Further Problems and potential developments}
\label{sec:conclusions}

\subsection{Uniformity and rates of convergence}

There is a long line of research giving negative results concerning nonparametric estimation,
starting with
\cite{MR0084241}, %
\cite{MR0205414},  %
\cite{MR0155394,MR0232487}, %
and continuing with
\cite{Donoho1988oneSidedInference}
and  %
\cite{MR1671670,MR1747496}.   %
In particular,
\cite{MR1747496} %
considers a general setting involving estimators or confidence limits with
optimal convergence rate $n^{-\rho}$ with $0 < \rho < 1/2$.
He shows, under weak additional conditions, that:
(i) there do not exist estimators which converge locally uniformly to a limit distribution; and
(ii) there are no confidence limits with locally uniform asymptotic coverage probability.
As an example he considers the mode of probability distributions $P$ on $\RR$ with
corresponding densities $p$ having a unique mode $M(p)$ and continuous second derivative
in a neighborhood of $M(p)$.
\cite{MR1747496} %
also reproves the result of
\cite{Hasminskii1979mode}  %
to the effect that the optimal rate of
convergence of a mode estimator for such a class is $n^{-1/5}$.  In this respect, we note that
\cite{BRW2007LCasymp} %
established a comparable lower bound
for estimation of the mode in the class of log-concave densities with continuous second derivative
at the mode; they obtained a constant which matches (up to absolute constants)
the pointwise (fixed $P$) behavior of the plug-in log-concave MLE of the mode.
\cite{MR947566}  %
gives a detailed treatment of minimax lower bounds for estimation of the mode under smoothness and curvature assumptions: assuming a bounded
derivative of order $p$ in a neighborhood of the mode
$M(f_0)$, Romano shows in his Theorem 3.1 that the minimax rate for estimation of $M(f_0)$ is $n^{-r}$ where
$r = (p-1)/(2p+1)$. He also shows that when $p=3$, the rate $n^{-2/7}$ can be achieved by a kernel density
estimator.

Our approach here has been to %
construct reasonable confidence intervals with pointwise (in $P$ or density $p$)
correct asymptotic coverage without proof of any local uniformity properties.  In view of the recent uniform rate results of
\cite{Kim-Samworth:16}  %
we suspect that our new confidence intervals {\sl will} (eventually) be shown to have some uniformity of
convergence in their coverage probabilities over appropriate subclasses of the class of log-concave densities,
but we leave the uniformity issues to future work.

\subsection{Some further directions and open questions}

But we now turn to discussion of some difficulties and potential for further work.

\subsubsection{Relaxing the second derivative assumption:}
\label{subsubsec:relaxing-curvature-assm}
As noted in the previous subsection, most of the available research concerning
inference for $M(f)$ has assumed $f \in C^2 (m, \mbox{loc})$ and  $f^{(2)} (M(f)) <0$.
Second derivative type assumptions of this type are made in
\cite{MR0143282},  %
\cite{Hasminskii1979mode},  %
\cite{Eddy1980mode}, %
\cite{MR1105839},   %
\cite{MR964293,MR947566},  and  %
\cite{MR1747496}.  %
Exceptions include
\cite{MR1015137},  %
\cite{Ehm1996cuspMode}, %
\cite{HerrmannZiegler2004mode}, %
\cite{BRW2007LCasymp}. %

What happens if the second derivative curvature assumption does not hold,
but instead is replaced by something either stronger or weaker, such as
\begin{equation*}
  f(m) - f(x) \le C |x - m|^r
\end{equation*}
for some $C$ where $1 \le r < 2$ (in the ``stronger'' case)
or $2 < r < \infty$ (in the ``weaker'' case)?
It is natural to expect that it is easier to form confidence intervals for $m$ when $1\le r< 2$ holds,
but that it is harder to form confidence intervals for $m$ when $2<r< \infty$.  In fact,
\cite{BRW2007LCasymp} page 1313 %
gives the following result:  if $f = \exp (\varphi)$ with $\varphi$ concave and
where $\varphi^{(j)} (m) = 0$ for $j=2, \ldots , k-1$ but
$\varphi^{(k)} $ exists and is continuous in a neighborhood of $m$ with $\varphi^{(k)} (m) \not= 0$,
then
$$
n^{1/(2k+1)} ( \widehat{M}_n - m ) \rightarrow_d  C_k (f(m), \varphi^{(k)} (m) ) M( H_k^{(2)} ) .
$$
Thus the convergence rate of the log-concave MLE of the
mode is slower as $k$ increases.
[On the other hand, by Theorem 2.1
of \cite{BRW2007LCasymp}, page 1305, the convergence rate
of the MLE $\widehat{f}_n $ of $f$ at $m$ (and in a local neighborhood of $m$) is {\sl faster}:
$$
n^{k/(2k+1)} ( \widehat{f}_n (m) - f(m)) \rightarrow_d c_k (m, f) H_k^{(2)} (0) .]
$$
Furthermore the sketch of the proof of the limiting distribution of the likelihood ratio statistic in
Section~\ref{ssec:PfSketchThm1} (ignoring any remainder terms) together with the results of
\cite{BRW2007LCasymp}, suggest
that $2\log \lambda_n \rightarrow_d  \DD_k $ under $f \in {\cal P}_m \cap {\cal Z}_k$
where
$$
{\cal Z}_k = \{ f \in {\cal P} : \ \varphi^{(j)} (m) = 0, \  j=2, \ldots , k-1, \ \varphi^{(k)} (m) \not= 0,\ \varphi \in C^k (m , \mbox{loc} ) \}
$$
and where with $ \varphi_k$ and $\varphi_k^0$ denoting the local limit processes in the white noise model (\ref{WhiteNoiseCanonicalConcave})
with drift term $g_0 (t) = - 12t^2$ replaced by $- (k+2)(k+1) | t |^k$,
$$
\DD_k \equiv \int \{ (\widehat{\phi}_k (v))^2 - (\widehat{\phi}_k^0(v) )^2 \} dv .
$$
We provide Monte Carlo evidence in support of this conjecture, by simulating $2 \log \lambda_n$ based on some parent distributions with $k \ne 2$.  The results are given in
Figure~\ref{fig:LRasymptotics-all}.
Figure~\ref{fig:LRasymptotics-all}  contains empirical distributions of $2 \log \lambda_n$ (with $n=10^4$ and $M=5\times 10^3$) for $9$ parent distributions, as well as a plot of the df of a $\chi_1^2$ random variable;
all of the curves from Figure~\ref{fig:figure1} are present, including $F_{\chi_1^2}$,
the Laplace (with $k=1$),
the standard normal,
Gamma$(3,1)$,
Beta$(2,3)$,
and Weibull$(1.5,1)$ (all four having $k=2$).
We also add four parent distributions with $k > 2$.  We include parent densities
proportional to $\exp\lb -|x|^j / j \rb$ for $x \in \RR$,
labelled ``Subbotin$(j)$,'' $j=3,4$  (having $k=j$).
We also include parent densities proportional to
$1-|x|^j$ for $x \in [-1,1]$,
labelled ``Bump$(j)$,'' $j=3,4$ (with $k=j$).
The (Monte Carlo estimators of) dfs based on the parent distributions with $k=3$ (estimating $\DD_3$) are grouped together, and the dfs  based on the parent distributions with $k=4$ (estimating $\DD_4$) are similarly grouped together.
Note that the   (Monte Carlo estimator of)
the distribution of $\DD_3$ seems to be stochastically larger than the (Monte-Carlo estimator) of the distribution
of $\DD_2 \equiv \DD$, and that the distribution of $\DD_4$ is apparently stochastically larger than that of $\DD_3$.
\begin{figure}[htb!]
  \centering
  \includegraphics[width=.9\textwidth]{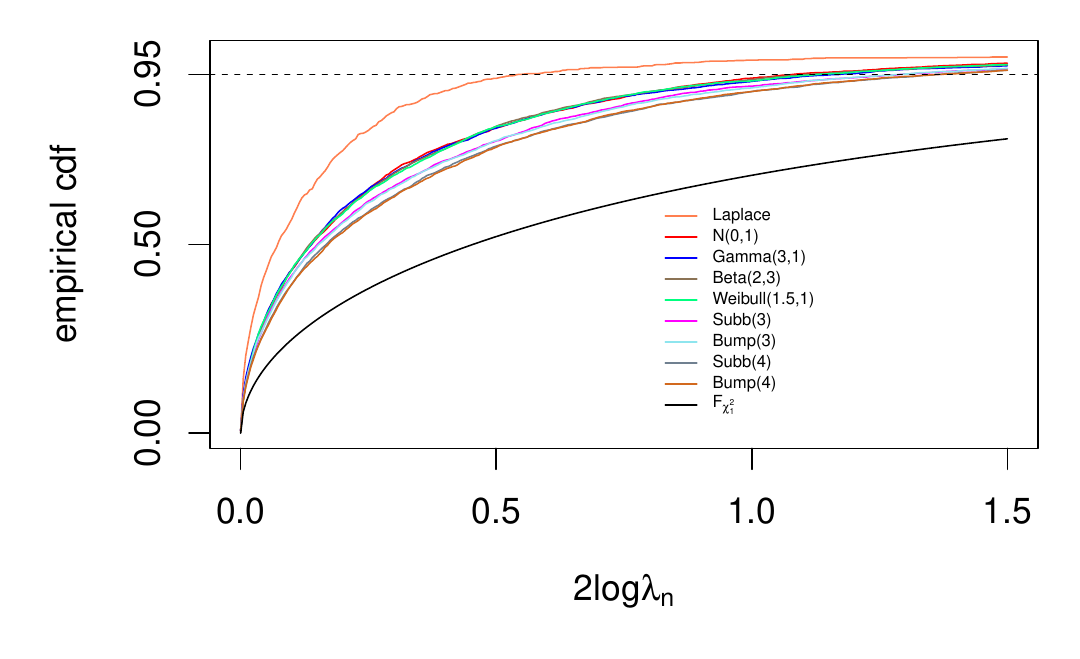}    %
  \caption{Empirical distributions of $2 \log \lambda_n$ for $f \in {\cal P}_m \cap {\cal Z}_k$, $k \in \{ 2,3,4 \}$ with  $n=10^4$ and $M = 5\times 10^3$ replications.}
  \label{fig:LRasymptotics-all}
\end{figure}
This raises several possibilities:
\begin{description}
\item[Option 1:]
  It seems likely that by choosing a critical value from the distribution of $\DD_6$ (say), that the resulting confidence
  intervals will have correct coverage for $f \in {\cal P} \cap {\cal Z}_6$ with conservative coverage if we happen to
  have $f \in {\cal P}\cap {\cal Z}_2$ (in which case critical points from $\DD = \DD_2$ would have sufficed), and anti-conservative
  coverage if the true $f$ belongs to $ {\cal P}\cap ({\cal Z}_k \setminus {\cal Z}_6)$ for some $k \ge 8$.
\item[Option 2:]
  Try to construct an adaptive procedure which first estimates $k$ (the degree of ``flatness'') of the true $f \in {\cal P}$
  (by $\hat{k}$ say),
  and then choose a critical point from the distribution of $\DD_{\hat{k}}$.
\end{description}
We leave the investigation of both of these possibilities to future work.

\subsubsection{Relaxing the assumption of log-concavity}
It would also be of interest to relax the assumption of log-concavity used in the developments here.
It would be very desirable to allow $f_0$ to be a completely arbitrary unimodal density, and allow
the smoothness at the mode $M(f_0)$ to vary as noted in the previous subsection.
As a more realistic replacement for this ambitious goal, we might instead consider
enlarging from the class of log-concave densities to some class of $s-$concave densities, ${\cal P}_s$
with $-1<s<0$;  i.e. densities of the form $f_0  = \phi_0^{1/s}$ with $\phi_0 > 0  $ convex; see e.g.\
\cite{KM2010quasiconcave},
\cite{DossWellner:2016a},
and
\cite{han:2015tr}.
Extensions in this direction will likely require further study of the R\'enyi divergence
estimators studied in
\cite{han:2015tr}
and mode-constrained versions thereof.
An interesting possible connection is that for the classes of $\alpha-$stable densities ${\cal S}_{\alpha}$
with $0 < \alpha \le 2$, we know that $f_0 \in {\cal S}_{\alpha}$ is unimodal.
Moreover, it is also known from
\cite{MR771431} %
that for the symmetric $\alpha-$stable distributions
$f_0^{\prime\prime}$ exists
in a neighborhood of the mode $m=M(f_0)$, and $f_0^{\prime\prime} (m) < 0$.
It is apparently not known if the $\alpha-$stable densities are $s-$concave for some $s \in (-1,0]$, even
though this obviously holds in the (few) examples for which an explicit formula for the density $f_0 \in {\cal S}_{\alpha}$
is available:  for example for $f_0 = $ Cauchy,  $f_0 \in {\cal S}_1 \cap {\cal P}_{-1/2}$, while for
L\'evy's completely asymmetric stable law, $f_0 \in {\cal S}_{1/2} \cap {\cal P}_{-2/3}$,
and of
course, for $f_0 = \ $Gaussian, $f_0 \in {\cal S}_2 \cap {\cal P}_0$.

\subsubsection{Mode inference in other contexts}

The methods developed in this paper raise several questions about other settings in which inference about %
a convex function may be of interest.

\begin{enumerate}[label=(\alph*),leftmargin=*]
\item
Can we do inference for the maxima or minima
in the contexts of estimation of intensity functions,  of (bathtub-shaped) hazard functions \citep{Jankowski:2009bx}, or of regression functions?
For instance, let $Y_i = r(x_i) + \epsilon_i$ where $\epsilon_i$ are mean zero i.i.d.\
observations and $x_i$ are fixed numbers in $\RR$.
If  we assume $r$ to be convex, then much is known about the least-squares estimator $\widehat{r}_n$ of $r$; see, e.g.,
\cite{Hildreth1954concave},
\cite{HP1976concave},
\cite{Mammen1991convex}, and
\cite{MR1891741,MR1891742}.
Can an argmin-constrained estimator $\widehat{r}_n^0$ be developed, in analogy
with the estimator $\ffna$, and used to develop likelihood ratio-based (or rather, residual sum-of-squares) tests and intervals for the location of the minimum of $r$?
In such a problem, we conjecture that the universal component of the
limit distribution of $\widehat{r}_n^0(m)$
will be the same as that studied in
Theorem~\ref{LRasympNullDistribution}.

\item Can the techniques used here be applied to form tests and intervals for the {\em value} (or height) of a concave function, $f_0$, rather than argmax?  Here, $f_0$ could be a log-concave density or a concave regression function (and other settings could be of interest).  That is, can we develop an estimator $\widehat{f}_n^0$ based on the constraint that $f$ satisfies $f(x_0) =y_0$ for $x_0, y_0$ fixed, and use $\widehat{f}_n^0$ with an unconstrained estimator $\ffn$ to form a likelihood ratio test for $f_0(x_0)$?  In the case where $f_0$ is a concave regression function, such a program has recently been studied by \cite{Doss:2018uh}.
Can this be extended to the density case, where $f_0$ is log-concave?

\item Can inference for the mode be extended to semiparametric settings?  For example can we form tests/intervals for the location of the minimum of an unknown convex ``link'' function $m_0$ in a single index model, $Y= m_0(\theta_0'X) + \epsilon$, where $X \in \RR^d$, $Y \in \RR$, $E(\epsilon|X)=0$, and $m_0$ is assumed to be convex \citep{Kuchibhotla:2017ug,MR3534348}?  Can we form tests/intervals for a modal regression function, i.e.\ for $m_0$ where $Y = m_0(X) + \epsilon$ where $\epsilon$ has {\em mode} $0$?  %

\end{enumerate}

\subsubsection{Beyond dimension $d=1$:}
It seems natural to consider generalizations of the present methods
to the case of multivariate log-concave and $s-$concave densities.
While there is a considerable amount of work on estimation of multivariate modes, mostly via kernel density estimation,
much less seems to be available in terms of confidence sets or other inference tools.
For some of this, see e.g.
\cite{MR1051586}, %
\cite{MR1985502},   %
\cite{MR1327618},   %
\cite{MR2112688}, %
\cite{MR0336874},  %
\cite{MR491553},  %
\cite{MR0331618}.  %
On the other hand apparently very little is known about the multivariate mode estimator
$M(\widehat{f}_n)$ where $\widehat{f}_n$ is the log-concave density estimator for $f $ on $\RR^d$
studied by
\cite{MR2758237} and  %
\cite{MR2645484}.  %
Further study of this estimator will very likely require considerable development of new methods
for study of the pointwise and local properties of the log-concave density estimator $\widehat{f}_n$ when $d \ge 2$.

\section*{Acknowledgements}
We  owe thanks to Lutz D\"umbgen for several helpful conversations.
This work was partially supported by a grant to the second author from the Simons Foundation and
was carried out in part during a visit to the Isaac Newton Institute.

\appendix

\section{Proofs}
\label{sec:ProofsPart2}

We now deal  with the remainder terms defined in (\ref{BasicDecompositionFirstForm}) in the course of our ``proof sketch'' for Theorem~\ref{LRasympNullDistribution}.
We first deal with the ``local'' remainder terms $R_{n,j}, \ R_{n,j}^0 $ with $j \in \{ 2, 3 \}$ in Subsection~\ref{ssec:ProofsLocalRTs}.
The analysis of these local remainder terms depends crucially on
Theorem~\ref{thm:GlobalConsistencyWithRates}.
 Subsection~\ref{ssec:GlobalRTs} is dedicated to the proofs for the ``non-local'' remainder terms.

For a function $f \colon \RR \to \RR$, we let $\| f \| := \sup_{x \in \RR} |f(x)|$, and for a set $J \subset \RR$ we let $\| f \|_J := \sup_{x \in J} |f(x)|$.

\subsection{The local remainder terms $R_{n,j}, R_{n,j}^0$, $j\in \{ 2, 3\}$}
\label{ssec:ProofsLocalRTs}

We first deal with the (easy) local remainder terms.
\begin{proposition}
  \label{prop:localerrorterms}
  Let $t_{n,1} = m - M n^{-1/5}$ and $t_{n,2} = m + M n^{-1/5}$ for $M >
  0$. Then the remainder terms $R_{n,2}$, $R_{n,2}^0$, $R_{n,3}$, and
  $R_{n,3}^0$ satisfy $n R_{n,j} = o_p (1)$ and $nR_{n,j}^0 = o_p (1)$ for $j
  \in \{2,3\}$.
\end{proposition}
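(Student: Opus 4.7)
The plan is to treat all four remainder terms by a single scaling argument, since they share the same structure: an integral over $D_n = [m - Mn^{-1/5}, m+Mn^{-1/5}]$ of a uniformly bounded exponential factor multiplied by the cube of $(\widehat\varphi_n - \varphi_0(m))$ or $(\widehat\varphi_n^0 - \varphi_0(m))$. The key inputs are Theorem~\ref{JointLimitingDistributions}.B, which controls the processes $X_n(t)=n^{2/5}(\widehat\varphi_n(m+n^{-1/5}t)-\varphi_0(m))$ and $X_n^0(t)=n^{2/5}(\widehat\varphi_n^0(m+n^{-1/5}t)-\varphi_0(m))$ in $\mathcal{L}^p[-M,M]$ for any $p\ge 1$, and the standard fact that $\widehat\varphi_n$ and $\widehat\varphi_n^0$ are uniformly bounded on compact neighborhoods of $m$ (with probability tending to one), which comes via concavity together with pointwise consistency at interior points.

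The main computation is a change of variables $u = m + n^{-1/5}v$. For $R_{n,3}$, for instance,
\begin{equation*}
  R_{n,3} = \tfrac{1}{2}f_0(m)\int_{-M}^{M} e^{\tilde x_{n,3}(m+n^{-1/5}v)}\,\bigl(\widehat\varphi_n(m+n^{-1/5}v)-\varphi_0(m)\bigr)^3\, n^{-1/5}\,dv.
\end{equation*}
Since $\tilde x_{n,3}$ lies between $\varphi_0(m)$ and $\widehat\varphi_n$, the exponential is bounded on $D_n$ by an $O_p(1)$ quantity $C_n$. Writing $(\widehat\varphi_n(m+n^{-1/5}v)-\varphi_0(m))^3 = n^{-6/5}X_n(v)^3$ then gives
\begin{equation*}
  |R_{n,3}| \le \tfrac{1}{2}f_0(m)\,C_n \cdot n^{-7/5}\int_{-M}^M |X_n(v)|^3\,dv.
\end{equation*}
The $\mathcal{L}^3[-M,M]$ convergence of $X_n$ to $\XX$ in Theorem~\ref{JointLimitingDistributions}.B shows that $\int_{-M}^M|X_n(v)|^3\,dv = O_p(1)$, so $R_{n,3} = O_p(n^{-7/5})$ and hence $nR_{n,3} = O_p(n^{-2/5}) = o_p(1)$. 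The argument for $R_{n,2}$ is identical. For $R_{n,3}^0$ and $R_{n,2}^0$ the same computation goes through with $X_n$ and $\XX$ replaced by $X_n^0$ and $\XX^0$, using the $\mathcal{L}^3$-convergence for the constrained process in the same theorem.

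The only point requiring a moment's care is the uniform bound on the exponential factor $C_n = O_p(1)$, since Theorem~\ref{JointLimitingDistributions}.B is stated in terms of $\mathcal{L}^p$ and finite-dimensional convergence rather than uniform convergence on $D_n$. I would handle this by invoking concavity of $\widehat\varphi_n$ and $\widehat\varphi_n^0$: on a fixed compact neighborhood of $m$, the supremum of a concave function is controlled by its values at the endpoints and at any interior point, and pointwise consistency (at points where $\varphi_0$ is finite and continuous, which holds at $m$) bounds these uniformly in $n$ with probability tending to one. This supplies the required $O_p(1)$ bound on $C_n$ and completes the proof.
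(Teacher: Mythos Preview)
Your argument is correct and yields the same $n R_{n,j}=o_p(1)$ conclusions, but the route differs from the paper's. The paper does not invoke the $\mathcal L^p$ convergence of Theorem~\ref{JointLimitingDistributions}.B; instead it cites uniform sup-norm rates from \cite{DR2009LC} and Theorems~4.6.B and~4.7.B of \cite{Doss-Wellner:2016ModeConstrained}, namely
\[
\sup_{|t|\le M}\bigl|\widehat\varphi_n(m+n^{-1/5}t)-\varphi_0(m)\bigr| = O_p\bigl((n^{-1}\log n)^{2/5}\bigr),
\]
and the analogous bound for $\widehat\varphi_n^0$. Cubing this and multiplying by $|D_n|=O(n^{-1/5})$ gives $n|\tilde R_n| = O_p\bigl(n^{-2/5}(\log n)^{6/5}\bigr)=o_p(1)$. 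Your approach has the advantage of using only results stated within the present paper and of producing a clean $O_p(n^{-7/5})$ without log factors; the paper's approach avoids any appeal to tightness in $\mathcal L^3$ and makes the uniform control of the exponential factor immediate, since the same sup-norm bound shows $\tilde x_{n,j}(u)\to 0$ uniformly on $D_n$.

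One small imprecision: your concavity heuristic for bounding $C_n$ is phrased awkwardly (concavity gives a \emph{lower} bound via the chord, not an upper bound on the supremum). The correct justification is simply that $\sup_u \widehat\varphi_n(u)=\log\|\widehat f_n\|_\infty\to\varphi_0(m)$, so $\sup_{u\in D_n}\tilde x_{n,3}(u)\to 0$ in probability; the same holds for $\widehat\varphi_n^0$.
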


\begin{proof}
  Recall that
  the remainder terms $R_{n,2}$, $R_{n,2}^0$, $R_{n,3}$, and $R_{n,3}^0$
  given by (\ref{RemainTermTwoUnconstrained}), (\ref{RemainTermTwoConstrained}), (\ref{RemainTermThreeUnconstrained}),
  and (\ref{RemainTermThreeConstrained}) are all of the form a constant times
  \begin{eqnarray*}
    && \tilde{R}_n \equiv \int_{D_n} e^{\tilde{x}_{n} (u) } (\widehat{\varphi}_n (u) - \varphi_0 (m) )^3 du, \ \ \ \mbox{or}\\
    && \tilde{R}_n^0 \equiv \int_{D_n} e^{\tilde{x}_{n}^0 (u) } (\widehat{\varphi}_n^0 (u) - \varphi_0 (m) )^3 du,
  \end{eqnarray*}
  where $D_n$ is a (possibly random) interval of length $O_p (n^{-1/5})$ and $\tilde{x}_{n,j}$ converges in probability,
  uniformly in $u \in D_n$, to zero.  But by Assumption 1 and by
  Theorem~\ref{thm:GlobalConsistencyWithRates} Part~\ref{thm:GlobalConsistencyWithRatesUnconstrained}
  it follows that for any $M>0$ we have
  \begin{eqnarray*}
    \lefteqn{\sup_{|t| \le M} | \widehat{\varphi}_n ( m + n^{-1/5} t) - \varphi_0 (m) | } \\
    & \le & \sup_{|t| \le M} \left \{ |  \widehat{\varphi}_n ( m + n^{-1/5} t) - \varphi_0 (m + n^{-1/5} t) |
      + | \varphi_0 (m + n^{-1/5} t) - \varphi_0 (m) | \right \}\\
    & = & O_p ( (n^{-1} \log n)^{2/5} ) + O ( n^{-2/5} )  = O_p ( ( n^{-1} \log n)^{2/5} ),
  \end{eqnarray*}
  and hence
  \begin{eqnarray*}
    n | \tilde{R}_n | = n O_p ( (n^{-1} \log n)^{6/5} \cdot n^{-1/5} ) = O_p ( n^{-2/5} (\log n)^{6/5} ) = o_p (1) ,
  \end{eqnarray*}
  By Assumption 1 and by
  Theorem~\ref{thm:GlobalConsistencyWithRates}  Part~\ref{thm:GlobalConsistencyWithRatesConstrained}
  it follows that for any $M>0$ we have
  \begin{eqnarray*}
    n | \tilde{R}_n^0 | = n O_p ( (n^{-1} \log n)^{6/5} \cdot n^{-1/5} ) = O_p ( n^{-2/5} (\log n)^{6/5} ) = o_p (1) .
  \end{eqnarray*}
  Note that the $o_p$ terms do not depend on $M$, by
  Theorem~\ref{thm:GlobalConsistencyWithRates}.  This completes the proof of
  negligibility of the local error terms $R_{n,j}, R_{n,j}^0$, $j\in \{ 2,
  3\}$.
\end{proof}

\subsection{The global remainder terms $R_{n,1}$ and $R_{n,1}^c$}
\label{ssec:GlobalRTs}
Recall that the remainder terms $R_{n,1}$ and $R_{n,1}^c$ are given by
(\ref{RemainTermOne}) and (\ref{RemainTermOneCompl}).  Note that the integral
in the definition of (\ref{RemainTermOneCompl}) is over $[X_{(1)}, X_{(n)}]
\setminus D_n$, and hence this term in particular has a global character.  We
will see later that $R_{n,1}$ also can be seen as having a global nature.

{\bf Outline:} From now on, we will focus our analysis on the portion of
$R_{n,1,\twoArgs}^c$ given by integrating over the left side, $[X_{(1)},
t_1]$.  Arguments for the integral over $[t_2, X_{(n)}]$ are analogous.
Thus, by a slight abuse of notation, define the one-sided counterpart to
$\RnTwo^c$ from \eqref{RemainTermOneCompl} for any $t < m $ by
\begin{equation}
  \label{eq:defn:LRS-remainder-4}
  \begin{split}
    R_{n,1,\oneArg}^c & \equiv \int_{[X_{(1)}, t]} \vvn d\FFn - \vvna d\FFna -
    \int_{[X_{(1)}, t]} (e^{\vvn} -    e^{\vvna}) \, d\lambda.
  \end{split}
\end{equation}
Here $\lambda$ is Lebesgue measure (and is unrelated to the likelihood ratio $\lambda_n$).
The analysis of $R_{n,1,t}^c$ is the greatest difficulty in understanding $2 \log
\lambda_n$.  The proof that $R_{n,1,t_n}^c$ is $o_p(n^{-1})$ when $b \to \infty$ where
$t_n = m - b n^{-1/5}$ is somewhat lengthy so we provide an outline here.
\begin{enumerate}
\item \label{item:rem:outline-1} {\bf Step 1, Decomposition of
    $R_{n,1,t}^c$:} Decompose $R_{n,1,t}^c$, to see that
  \begin{equation}
    R_{n,1,t}^c = A_{n,t}^1 + E_{n,t}^1 - T_{n,t}^1
    = A_{n,t}^2 + E_{n,t}^2 + T_{n,t}^2,
  \end{equation}
  where the summands $A_{n,t}^i, E_{n,t}^i, T_{n,t}^i$ are defined below
  (see \eqref{eq:rem:Rn1tc-main-decomposition} and the preceding text).
\item \label{item:rem:outline-2} {\bf Step 2, Global $O_p(n^{-1})$
    conclusion:}  %
  In this section we use the fact that away from the mode, the
  characterizations of $\vvn $ and $\vvna$ are identical to study $T_n^i$,
  $i=1,2$, which are related to $\int_{[X_{(1)}, t]} (\vvn - \vvna)^2 \ffn d\lambda$.
  and $\int_{[X_{(1)}, t]} (\vvn - \vvna)^2 \ffna d\lambda$.  We will show
  ${T}_n^i = O_p(n^{-1})$, $i=1,2$.  Note $O_p(n^{-1})$ would be the
  size of the integral if it were over a local interval of length
  $O_p(n^{-1/5})$ (under our curvature assumptions), but here the integral is
  over an interval of constant length or larger, so this result is global in nature.
\item \label{item:rem:outline-3} {\bf Step 3, Convert global $O_p$ to local
    $o_p$ to global $o_p$:} Convert the global $O_p(n^{-1})$ conclusion over
  $T_{n}^i$ into an $o_p(n^{-1})$ conclusion over a interval of length
  $O_p(n^{-1/5})$ local to $m$.  Feed this result back into the argument in
  Step~\ref{item:rem:outline-2}, yielding $T_{n,t}^i = o_p(n^{-1}),$ $i=1,2$.
  Apply Lemma~\ref{lem:rem:local-to-global-square-integral} to show
  additionally that there exist knots of $\vvn$ and $\vvna$ that are
  $o_p(n^{-1/5})$ apart in an $O_p(n^{-1/5})$ length interval on which $\Vert
  \vvna - \vvn \Vert = o_p(n^{-2/5})$,
  $\Vert
  \ffna - \ffn \Vert = o_p(n^{-2/5})$,
  and $\Vert \FFna - \FFn \Vert =
  o_p(n^{-3/5})$.
\item \label{item:rem:outline-4} {\bf Step 4, Concluding arguments:} Return
  to the decomposition of $R_{n,1,t}^c$ given in
  Step~\ref{item:rem:outline-1}; the terms given there depend on
  $\vvna-\vvn$, $\ffna -\ffn$, and $\FFna-\FFn$.  Thus, using the results of
  Step~\ref{item:rem:outline-3} we can show $n R_{n,1,t}^c = o_p(1)$ as desired.
\end{enumerate}
To finalize the argument, in Section~\ref{subsubsec:final-arguments}, we take
$t_n = m - bn^{-1/5}$, but we also need to let $b \to \infty$.  Thus, the
$O_p$ and $o_p$ statements above need to hold uniformly in $b$.

\subsubsection{Decomposition of $R_{n,1,t}^c$}

We begin by decomposing $R_{n,1,t}^c$ for fixed $t < m$.
By \eqref{eq:f-to-varphi-taylor-2} with $\vp_{1n} = \vvn$ and  $\vp_{2n} = \vvna$,  we see that
\begin{align}
  R_{n,1,t}^c
  & =
  \int_{[X_{(1)}, t]} \lp \vvn \ffn - \vvna \ffna
  - \lp \vvn - \vvna + \frac{(\vvn - \vvna)^2}{2} e^{\epsilon_{n}^1} \rp \ffna
  \rp \, d \lambda \nonumber  \\
  & = \int_{[X_{(1)}, t]} \lp \vvn \ffn - \vvn \ffna
  -  \frac{(\vvn - \vvna)^2}{2} e^{\epsilon_{n}^1} \ffna \rp d\lambda \nonumber
  \\
  & = \int_{[X_{(1)}, t]} \lp \vvn (\ffn - \ffna )
  -  \frac{(\vvn - \vvna)^2}{2} e^{\epsilon_{n}^1} \ffna \rp \, d\lambda,
  \label{eq:rem:Rn1tc-ffna-decomp1}
\end{align}
where $\lambda $ is Lebesgue measure and $\epsilon_{n}^1(x)$ lies between $0
$ and $\vvn(x)- \vvna(x)$.
Again applying  \eqref{eq:f-to-varphi-taylor-2} now with $\vp_{1n} = \vvna$ and  $\vp_{2n} =
\vvn$,  we see that
\begin{align}
  R_{n,1,t}^c
  & =
  \int_{[X_{(1)}, t]} \lp \vvn \ffn - \vvna \ffna
  + \lp  e^{\vvna - \vvn} - 1 \rp \ffn
  \rp \, d \lambda \nonumber  \\
  & = \int_{[X_{(1)}, t]} \lp \vvn \ffn - \vvna \ffna
  + \lp   \vvna - \vvn + \frac{(\vvna - \vvn)^2}{2} e^{\epsilon_{n}^2} \rp \ffn
  \rp \, d \lambda \nonumber \\
  & = \int_{[X_{(1)}, t]} \lp \vvna (\ffn - \ffna)
  +  \frac{(\vvna - \vvn)^2}{2} e^{\epsilon_{n}^2} \ffn \rp \,
  d\lambda, \label{eq:rem:Rn1tc-ffn-decomp1}
\end{align}
where $\epsilon_n^2$ lies between $0$ and $\vvna(x) - \vvn(x)$.
For a function $f(x)$, recall the notation $f_{s}(x) = f(x) -
f(s)$ for $x \le s$ and $f_s(x)=0$ for $x \ge s$.  Now define $A^i_{n,t}
\equiv A^i_n$, $i=1,2$ by
\begin{align}
  A_n^1
  \equiv   \int_{[X_{(1)}, t]} \widehat{\varphi}_{n,t} \, d\lp \FF_n - \FFna \rp
  \quad   \mbox{ and } \quad
  A_n^2 \equiv
  \int_{[X_{(1)}, t]}  \widehat{\varphi}_{n,t}^0 d\lp \FFn - \FF_n \rp
\end{align}
and
define $    E_{n,t}^1  \equiv E_n^1 $ to be
\begin{equation}
  \begin{split}
    \MoveEqLeft \int_{(\tau,t]} \widehat{\varphi}_{n,t} \, d\lp \FFn - \Fn \rp
    + \vvn(t) ( \FFn(t) - \FFna(t) ) \\
    & + (\vvn(\tau)    - \vvn(t)) ( \FFn(\tau) - \Fn(\tau) ) \label{eq:rem:defn:Ent1}
  \end{split}
\end{equation}
and $    E_{n,t}^2  \equiv E_n^2 $ to be
\begin{equation}
  \begin{split}
    \MoveEqLeft \int_{(\tau^0, t]} \widehat{\vp}_{n,t}^0  d\lp \Fn - \FFna \rp
    + \vvna(t) ( \FFn(t) - \FFna(t)) \\
    & + (\vvna(\tau^0)
    - \vvna(t)) (\Fn(\tau^0) - \FFna(\tau^0)), \label{eq:rem:defn:Ent2}
  \end{split}
\end{equation}
where $\tau \equiv \tau_-(t) = \sup S(\vvn) \cap (-\infty,t]$
and $\tau^0 \equiv \tau_-^0(t) = \sup S(\vvna) \cap (-\infty, t]$.
We will assume that
\begin{equation*}
  \tau \le \tau^0
\end{equation*}
without loss of generality, because the arguments are symmetric in $\vvn$ and
$\vvna$, since we will be arguing entirely on one side of the mode.

Our next lemma will decompose the first terms in
\eqref{eq:rem:Rn1tc-ffna-decomp1} and \eqref{eq:rem:Rn1tc-ffn-decomp1}, into
$A_{n}^i + E_n^i$, $i=1,2$.  The crucial observation is that
$A_n^1 \le 0 $ and $A_n^2 \ge 0$, by taking $\Delta = \vvnt{t}$ and $\Delta =
\vvnat{t}$ in the characterization theorems for the constrained and
unconstrained MLEs,
Theorem 2.2 A and B of
\cite{Doss-Wellner:2016ModeConstrained}. \label{page:Ani-inequalities}  Note that since $t \le m$,
$\vvnt{t}$ has modal interval containing $m$.

\begin{lemma}
  Let all terms be as defined above. We then have
  \begin{equation}
    \label{eq:rem:rn1c:1}
    \begin{split}
      \int_{[X_{(1)}, t]} \vvn ( \ffn - \ffna ) d\lambda
      & = A_{n,t}^1
      + E_{n,t}^1
    \end{split}
  \end{equation}
  and
  \begin{equation}
    \label{eq:rem:rn1c:2}
    \begin{split}
      \int_{[X_{(1)}, t]} \vvna (\ffn - \ffna) d\lambda
      & = A_{n,t}^2
      + E_{n,t}^2.
    \end{split}
  \end{equation}
\end{lemma}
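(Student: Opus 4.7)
Both identities are algebraic rearrangements that exploit the Fenchel-type characterizations of $\vvn$ and $\vvna$ from Theorems 2.2 and 2.8 of \cite{Doss-Wellner:2016ModeConstrained}. I sketch the argument for the first identity; the second is derived by the parallel computation, with $\vvn, \tau$ replaced throughout by $\vvna, \tau^0$ and Theorem 2.2 replaced by Theorem 2.8.

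Using $\ffn\,d\lambda=d\FFn$ and $\ffna\,d\lambda=d\FFna$ and inserting the empirical CDF $\Fn$ as a pivot,
\begin{equation*}
  \int_{[X_{(1)},t]}\vvn\,(\ffn-\ffna)\,d\lambda = \int_{[X_{(1)},t]}\vvn\,d(\FFn-\Fn) + \int_{[X_{(1)},t]}\vvn\,d(\Fn-\FFna).
\end{equation*}
On $[X_{(1)},t]$ substitute $\vvn = \vvnt{t}+\vvn(t)$. Applied to the $\Fn-\FFna$ summand, and using $\Fn(X_{(1)}^-)=\FFna(X_{(1)}^-)=0$, this produces $A_{n,t}^1 + \vvn(t)(\Fn(t)-\FFna(t))$. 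For the $\FFn-\Fn$ summand, split at $\tau$, the largest knot of $\vvn$ in $(-\infty,t]$. The same substitution on $(\tau,t]$ yields $\int_{(\tau,t]}\vvnt{t}\,d(\FFn-\Fn)$ --- the first summand of $E_{n,t}^1$ --- together with the boundary piece $\vvn(t)[(\FFn-\Fn)(t)-(\FFn-\Fn)(\tau)]$.

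The crucial step is $\int_{[X_{(1)},\tau]}\vvn\,d(\FFn-\Fn)$. Integration by parts (the $X_{(1)}^-$ boundary term is $0$ since both CDFs vanish there) gives
\begin{equation*}
  \int_{[X_{(1)},\tau]}\vvn\,d(\FFn-\Fn) = \vvn(\tau)(\FFn(\tau)-\Fn(\tau)) - \int_{[X_{(1)},\tau]}(\FFn-\Fn)(u)\vvn'(u)\,du.
\end{equation*}
The characterization theorem for $\vvn$ gives $\int_{-\infty}^x(\FFn-\Fn)\,du = 0$ at every knot of $\vvn$, and since $\vvn'$ is a step function with jumps supported on the knot set, a second integration by parts forces the remaining integral to vanish. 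Collecting the three boundary contributions $\vvn(t)(\Fn(t)-\FFna(t))$, $\vvn(t)[(\FFn-\Fn)(t)-(\FFn-\Fn)(\tau)]$, and $\vvn(\tau)(\FFn(\tau)-\Fn(\tau))$ and regrouping yields exactly the remaining terms $\vvn(t)(\FFn(t)-\FFna(t))$ and $(\vvn(\tau)-\vvn(t))(\FFn(\tau)-\Fn(\tau))$ of $E_{n,t}^1$.

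The main obstacle is the bookkeeping in the integration-by-parts step: handling atoms of $\Fn$ at observations, fixing correct one-sided limits at $\tau$ and $X_{(1)}$, and verifying that $\tau$ (and $\tau^0$ for the second identity) sits strictly to the left of the mode, so that the Fenchel \emph{equality} rather than merely a one-sided inequality is what applies at the relevant knots. Once those details are settled, the decomposition is purely formal.
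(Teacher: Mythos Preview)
Your argument is correct and follows essentially the same plan as the paper: pivot through $\Fn$, exploit the Fenchel equality at the knot $\tau$ (respectively $\tau^0$) to kill the contribution on $[X_{(1)},\tau]$, and collect the boundary terms. The only organizational difference is that where the paper invokes the perturbation form $\int \widehat{\varphi}_{n,\tau}\,d(\Fn-\FFn)=0$ directly and then carries out a longer chain of additions and subtractions, you instead integrate by parts twice and use the equivalent invelope-touching form $\int_{-\infty}^{\tau_i}(\FFn-\Fn)\,du=0$ at every knot $\tau_i$ of $\vvn$; since $\vvn'$ is piecewise constant with breakpoints only at knots, this cleanly annihilates the remaining integral. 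Your bookkeeping for the boundary pieces is accurate, and the parallel argument for the second identity goes through with the constrained invelope equality on $[X_{(1)},\tau^0]$ (Theorem~2.10 of the companion paper), noting that the roles of the $(\FFn-\Fn)$ and $(\Fn-\FFna)$ summands swap when producing $A_{n,t}^2$ versus $E_{n,t}^2$.
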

\begin{proof}
  We first show \eqref{eq:rem:rn1c:1}.  We can see
  $    \int_{[X_{(1)}, t]} \vvn ( \ffn - \ffna)$ equals
  \begin{align}
    \int_{[X_{(1)}, t]} (  \widehat{\varphi}_{n,\tau_-} + \vvn -
    \widehat{\varphi}_{n,\tau_-} ) \ffn d\lambda
    - \int_{[X_{(1)}, t]} ( \widehat{\vp}_{n,t} + \vvn - \widehat{\vp}_{n,t}
    )
    \ffna d\lambda, \nonumber
  \end{align}
  and since $\int \widehat{\varphi}_{n,\tau_-} \, d\lp \Fn - \FFn \rp = 0$,
  this equals
  \begin{align}
    \MoveEqLeft \int_{[X_{(1)}, \tau_-]} \widehat{\vp}_{n,\tau_-} d\Fn
    + \int_{[X_{(1)}, \tau_-]} (\vvn - \widehat{\vp}_{n,\tau_-} ) \ffn
    d\lambda
    + \int_{(\tau_-,t]} \vvn \ffn d\lambda \nonumber \\
    &  \quad - \lp \int_{[X_{(1)}, t]} \widehat{\vp}_{n,t} \ffna d\lambda
    + \vvn(t) \FFna(t) \rp \nonumber \\
    & = \int_{[X_{(1)}, t]} \widehat{\vp}_{n,t} d\Fn
    + \int_{[X_{(1)}, \tau_-]}  (\widehat{\vp}_{n,\tau_-} -
    \widehat{\vp}_{n,t} ) d\Fn
    - \int_{(\tau_-,t]} \widehat{\vp}_{n,t} d\Fn \nonumber \\
    & \quad  + \int_{[X_{(1)}, \tau_-]} \vvn(\tau_-) \ffn d\lambda
    + \int_{(\tau_-,t]} \vvn \ffn d\lambda
    - \lp \int \widehat{\vp}_{n,t} \ffna d\lambda
    + \vvn(t) \FFna(t) \rp \nonumber \\
    & = \int_{[X_{(1)}, t]} \widehat{\vp}_{n,t} \, d\lp \Fn - \FFna \rp
    + \lp \vvn(t) - \vvn(\tau_-) \rp \Fn(\tau_-)
    - \int_{(\tau_-, t]} \vvn d\Fn \nonumber \\
    & \quad + \vvn(t) ( \Fn(t) - \Fn(\tau_-))
    + \vvn(\tau_-) \FFn(\tau_-) + \int_{(\tau_-, t]} \vvn \ffn d\lambda
    - \vvn(t) \FFna(t), \nonumber
  \end{align}
  which equals
  \begin{equation*}
    \begin{split}
      \MoveEqLeft \int \widehat{\vp}_{n,t} d \lp \Fn - \FFna \rp
      + \int_{(\tau_-, t]} \vvn d \lp \FFn - \Fn \rp \\
      & + \vvn(t) ( \Fn(t) - \FFna(t))
      + \vvn(\tau_-) ( \FFn(\tau_-) - \Fn(\tau_-))
    \end{split}
  \end{equation*}
  which equals
  \begin{equation*}
    \begin{split}
      \MoveEqLeft \int \widehat{\vp}_{n,t} d\lp \Fn - \FFna \rp
      + \int_{(\tau_-,t]} \widehat{\vp}_{n,t} d \lp \FFn-\Fn \rp \\
      & + \vvn(t) (\FFn(t) - \FFna(t))
      + (\vvn(\tau_-) - \vvn(t)) ( \FFn(\tau_-) - \Fn(\tau_-)),
    \end{split}
  \end{equation*}
  as desired.

  Now we show \eqref{eq:rem:rn1c:2}.  We see
  $\int_{[X_{(1)}, t]} \vvna (\ffn - \ffna) d\lambda$ equals
  \begin{align*}
    \int_{[X_{(1)}, t]} \lp \vvnat{t} + \vvna - \vvnt{t} \rp \ffn d\lambda
    - \int_{[X_{(1)}, t]} \lp \vvnat{\tau^0_-} + \vvna - \vvnat{\tau^0_-} \rp
    \ffna d\lambda
  \end{align*}
  and since $\int \vvnat{\tau^0_-} d(\Fn - \FFna) = 0$, this equals
  \begin{align*}
    \MoveEqLeft
    \int_{[X_{(1)}, t]} \vvnat{t}\ffn d\lambda
    + \vvna(t) \FFn(t)
    - \bigg[ \int \vvnat{\tau^0_-} d \Fn
    + \int_{[X_{(1)}, \tau^0_-]}
    \vvna(\tau^0_-) \ffna d\lambda
    \\
    & \hspace{13em} + \int_{(\tau^0_-, t]} \vvna \ffna
    d\lambda \bigg],
  \end{align*}
  which equals
  \begin{align*}
    \MoveEqLeft \int_{[X_{(1)}, t]} \vvnat{t} \ffn d\lambda
    + \vvna(t) \FFn(t)
    - \bigg[ \int_{[X_{(1)}, t]} \vvnat{t} d \Fn \\
    & + \int_{[X_{(1)}, t]} (\vvnat{\tau^0_-} - \vvnat{t} ) d\Fn
    + \vvna(\tau^0_-) \FFna(\tau^0_-)
    + \int_{(\tau^0_-, t]} \vvna \ffna d\lambda
    \bigg]
  \end{align*}
  which equals
  \begin{align*}
    \MoveEqLeft     \int_{[X_{(1)}, t]} \vvnat{t} d(\FFn - \Fn )
    + \vvna(t) \FFn(t)
    - \bigg[
    \int_{[X_{(1)}, \tau^0_-]}    (\vvna(t) - \vvna(\tau^0_-) ) d\Fn \\
    & \quad - \int_{(\tau^0, t]} \vvna d\Fn
    + \int_{(\tau^0_-, t]} \vvna(t) d\Fn
    + \vvna(\tau^0_-) \FFna(\tau^0_-)
    + \int_{(\tau^0_-, t]} \vvna \ffna d\lambda
    \bigg]
  \end{align*}
  which equals
  \begin{align*}
    \MoveEqLeft    \int_{[X_{(1)}, t]} \vvnat{t} d(\FFn - \Fn)
    + \int_{(\tau^0_-, t]} \vvna d (\Fn - \FFna)
    + \vvna(t) \FFn(t)
    +  \vvna(\tau^0_-) \Fn(\tau^0_-) \\
    & \quad - \vvna(t) \Fn(\tau^0_-)
    - \vvna(t) (\Fn(t) - \Fn(\tau^0_-))
    - \vvna(\tau^0_-) \FFna(\tau^0_-)
  \end{align*}
  which equals
  \begin{align*}
    \MoveEqLeft
    \int_{[X_{(1)}, t]} \vvnat{t} d(\FFn - \Fn) + \int_{(\tau^0_-,t]} \vvna
    d(\Fn - \FFna) \\
    & + \vvna(t) (\FFn(t) - \Fn(t)) + \vvna(\tau^0_-) (
    \Fn(\tau^0_-) - \FFna(\tau^0_-))
  \end{align*}
  which equals
  \begin{align*}
    \MoveEqLeft    \int_{[X_{(1)}, t]} \widehat{\vp}_{n,t}^0 d \lp \FFn - \Fn \rp +
    \int_{(\tau^0_-, t]} \widehat{\vp}_{n,t}^0 d\lp \Fn - \FFna \rp +
    \vvna(t) ( \FFn(t) - \FFna(t))  \\
    & \quad + (\vvna(\tau^0_-)  - \vvna(t) ) (\Fn(\tau^0_-) -
    \FFna(\tau^0_-)),
  \end{align*}
  as desired.
\end{proof}

\medskip

Define
$T_{n,t}^i \equiv T_n^i$, $i=1,2$, by \label{rem:defn:Tni}
\begin{align}
  \label{eq:rem:defn:Tni}
  T_n^1 =
  \int_{[X_{(1)}, t]} \frac{ (\vvn - \vvna)^2}{2}
  e^{\epsilon_{n}^1}  \ffna
  d\lambda
  \quad \mbox{ and } \quad
  T_n^2 =
  \int_{[X_{(1)}, t]} \frac{(\vvn - \vvna)^2}{2}
  e^{\epsilon_{n}^2} \ffn d\lambda,
\end{align}
so that
\begin{equation}
  \label{eq:rem:Rn1tc-main-decomposition}
  R_{n,1,t}^c = A_{n,t}^1 + E_{n,t}^1 - T_{n,t}^1
  = A_{n,t}^2 + E_{n,t}^2 + T_{n,t}^2.
\end{equation}
by \eqref{eq:rem:Rn1tc-ffna-decomp1} and \eqref{eq:rem:Rn1tc-ffn-decomp1}.
Recall (from page~\pageref{page:Ani-inequalities}) that $A_n^1 \le 0 \le A_n^2$.
Thus
\begin{equation}
  \label{eq:rem:main-An2-ineq}
  E_n^1 - E_n^2 \ge E_n^1 - E_n^2 - T_n^2 - T_n^1
  = A_n^2 - A_n^1 \ge
  \begin{cases}
    & A_n^2 \ge 0,\\
    & -A_n^1 \ge 0.
  \end{cases}
\end{equation}
To see that $R_{n,1}^c  = O_p(n^{-1})$ we need to see that $E_n^i, A_n^i,$
and $T_n^i$ are each $O_p(n^{-1})$ (for, say, $i=1$).
We can see already that $E_n^1 - E_n^2 = O_p(n^{-1})$ (by direct analysis of
the terms in $E_n^1-E_n^2$ from \eqref{eq:rem:defn:Ent1} and
\eqref{eq:rem:defn:Ent2}), which yields that $A_n^1$ and $A_n^2$ are both
$O_p(n^{-1})$.  However, it is clear that we also need to analyze $T_n^1 +
T_n^2$ to understand $R_{n,1}^c$.  We need to show that $T_{n,t}^1 +
T_{n,t}^2$ is $O_p(n^{-1})$ to see that $R_{n,1,t}^c = O_p(n^{-1})$; but we
will also be able to use that $T_{n,t}^1+T_{n,t}^2=O_p(n^{-1})$ to then find
$t^*$ values such that $T_{n,t^*}^1+T_{n,t^*}^2=o_p(n^{-1})$, which will
allow us to argue in fact that $E_{n,t^*}^1+E_{n,t^*}^2 = o_p(n^{-1})$
(rather than just $O_p(n^{-1})$), and thus that $R_{n,1,t^*}^c =
o_p(n^{-1})$, as is eventually needed.  Thus, we will now turn our attention
to studying $T_n^1 + T_n^2$.  Afterwards, we will study
\begin{equation}
  \label{eq:rem:Rn1tc-average-decomp}
  R_{n,1,t}^c = (A_{n,t}^1 + E_{n,t}^1 - T_{n,t}^1
  + A_{n,t}^2 + E_{n,t}^2 + T_{n,t}^2)/2,
\end{equation}
from \eqref{eq:rem:Rn1tc-main-decomposition}.  From seeing
$T_{n,t^*}^1+T_{n,t^*}^2 = o_p(n^{-1})$, we
will be able to conclude that $A_{n,t^*}^1+A_{n,t^*}^2$ and
$E_{n,t^*}^1+E_{n,t^*}^2$ are also $o_p(n^{-1})$, as desired.
Then we can conclude $R_{n,1, t^*}^c = o_p(n^{-1})$.

\subsubsection{Show  ${T}_n^i = O_p(n^{-1})$, $i=1,2$}

The next lemma shows that terms that are nearly identical to $T_{n}^i$ are
$O_p(n^{-1})$.  The difference between the integrand in the terms in the
lemma and the integrand defining $T_n^i$ is that $\epsilon_n^i$ is replaced
by a slightly different $\tilde \epsilon_n^i$.
Previously, we considered $t$ to be fixed, whereas now we will have it vary
with $n$.
\begin{lemma}
  \label{lem:rem:step2:Tni-Opn-1}
  Let $t_n  < m $ be a (potentially random) sequence such that
  \begin{equation}
    \label{eq:rem:2}
    t_n \le \max
    \lp S(\vvn) \cup S(\vvna)  \rp \cap (-\infty, m).
  \end{equation}
  Let
  \begin{equation}
    \label{eq:rem:defn:tilde-Tni}
    \tilde T_{n,t}^1 = \int_{[X_{(1)}, t_n]}
    \lp \vvna - \vvn \rp^2 e^{\tilde \epsilon_{n}^1} \ffna d
    \lambda
    \;     \mbox{ and } \;
    \tilde T_{n,t}^2
    = \int_{[X_{(1)}, t_n]} \lp \vvna - \vvn \rp^2 e^{ \tilde \epsilon_{n}^2} \ffn d
    \lambda,
  \end{equation}
  where $\tilde \epsilon_{n}^1(x)$ lies between $\vvn(x)-\vvna(x)$ and $0$,
  and $\tilde \epsilon_{n}^2(x)$ lies between $\vvna(x)-\vvn(x)$ and $0$, and
  are defined in \eqref{eq:rem:second-I-plus-II} in the proof.  Then we have
  \begin{equation}
    \label{eq:rem:3}
    \tilde T_{n,t_n}^i =
    O_p(n^{-1})
    \quad \text{ for } i =1,2.
  \end{equation}
\end{lemma}
\begin{proof}
  For a
  function $f(x)$, recall the notation $f_{s}(x) = f(x) - f(s)$ for $x \le s$
  and $f_s(x)=0$ for $x \ge s$.
  Let $\tau \in S(\vvn)$ and $\tau^0 \in S(\vvna)$,
  and assume that
  \begin{equation}
    \label{eq:rem:defn:tau-tau0}
    \tau \le \tau^0  < m.
  \end{equation}
  (The argument is symmetric in $\vvn$ and $\vvna$, so we may assume this
  without loss of generality.)  We will show the lemma holds for the case $t_n =
  \tau^0$, and then the general $t_n \le \tau^0$ case follows since the integral is
  increasing in $t_n$.
  Now, because $\vvnat{\tau}$ is concave, for
  $\epsilon \le 1$, the function $\vvn(x) + \epsilon(\vvnat{\tau}(x) -
  \vvnt{\tau}(x))$ is concave. So, by Theorem 2.2, page 43, of
  \cite{DR2009LC}, we have
  \begin{equation}
    \label{eq:rem:old-I}
    \int (\vvnat{\tau} -  \vvnt{\tau}) d(\Fn - \FFn) \le 0.
  \end{equation}
  Similarly, if $\tau^0 $ is a knot of $\vvna$ and is less than the mode,
  then since $\vvna(x) + \epsilon \lp \vvnt{\tau^0}(x) -\vvnat{\tau^0}(x)
  \rp$ is concave with mode at $m$ for $\epsilon$ small (since
  $\vvnt{\tau^0}(x) -\vvnat{\tau^0}(x)$ is only nonzero on the left side of
  the mode), by the characterization
  Theorem 2.2 B of
  \cite{Doss-Wellner:2016ModeConstrained},
  we have
  \begin{equation*}
    \int (\vvnt{\tau^0} - \vvnat{\tau^0}) d(\Fn-\FFna) \le 0.
  \end{equation*}
  Then setting \label{page:def-II-L-tau} $II_{n,\tau^0}^L := \int_{[X_{(1)}, \tau^0]} \lp \vvn - \vvna \rp d\lp
  \Fn - \FFna \rp$, we have
  \begin{align}
    \label{eq:rem:4}
    0 &  \ge \int_{[X_{(1)}, \tau^0]} \lp   \vvnt{\tau^0} - \vvnat{\tau^0}\rp d\lp \Fn -
    \FFna \rp   \\
    & = %
    II_{n,\tau^0}^L
    - \lp \vvn(\tau^0) - \vvna(\tau^0) \rp \lp \Fn(\tau^0) - \FFna(\tau^0)
    \rp. \label{eq:rem:5}
  \end{align}
  And setting $I_{n,\tau^0}^L := \int_{[X_{(1)}, \tau^0]} \lp \vvna - \vvn \rp d\lp \Fn -
  \FFn \rp $, we have
  \begin{equation}
    \label{eq:rem:decompose-IL-tau}
    \begin{split}
      I_{n,\tau^0}^L
      & =  \int_{[X_{(1)}, \tau]} \lp \vvna(u) - \vvna(\tau) \rp d \lp \Fn - \FFn
      \rp(u) \\
      & \quad - \int_{[X_{(1)}, \tau]} \lp \vvn(u)-\vvn(\tau)  \rp d\lp
      \Fn-\FFn\rp(u) \\
      & \quad + \lp \vvna(\tau) - \vvn(\tau) \rp \int_{[X_{(1)}, \tau]} d \lp \Fn - \FFn \rp \\
      & \quad +   \int_{(\tau, \tau^0]} \lp \vvna - \vvn \rp d\lp \Fn - \FFn \rp ,
    \end{split}
  \end{equation}
  and, since the first two summands together yield the left hand side of
  \eqref{eq:rem:old-I},
  we have
  \begin{equation}
    \label{eq:rem:InL-upperbound}
    I_{n,\tau^0}^L \le
    \lp \vvna(\tau) - \vvn(\tau) \rp \int_{[X_{(1)}, \tau]} d \lp \Fn - \FFn \rp
    +   \int_{(\tau, \tau^0]} \lp \vvna - \vvn \rp d\lp \Fn - \FFn \rp.
  \end{equation}
  \label{page:new-analysis-squared-integral}
  Now, we apply \eqref{eq:f-to-varphi-taylor-1} of
  Lemma~\ref{lem:f-to-varphi-taylor} to see that
  \begin{align}
    I_{n,\tau^0}^L +II_{n,\tau^0}^L
    & =
    \int_{[X_{(1)}, \tau^0]} \lp \vvna - \vvn \rp d \lp \FFna - \FFn
    \rp \nonumber \\
    & =
    \begin{cases}
      & \int_{X_{(1)}}^{\tau^0} \lp \vvna - \vvn \rp^2 e^{\tilde \epsilon_{n}^1} \ffna d
      \lambda \ge 0,  \\
      & \int_{X_{(1)}}^{\tau^0} \lp \vvna - \vvn \rp^2 e^{\tilde \epsilon_{n}^2} \ffn d
      \lambda \ge 0,
    \end{cases} \label{eq:rem:second-I-plus-II}
  \end{align}
  where $\tilde \epsilon_{n}^2(x)$ lies between $\vvna(x)-\vvn(x)$ and $0$
  and $\tilde \epsilon_{n}^1(x)$ lies between $\vvn(x)-\vvna(x)$ and $0$.  By
  \eqref{eq:rem:5} and \eqref{eq:rem:InL-upperbound}, \eqref{eq:rem:second-I-plus-II} is
  bounded above by
  \begin{equation}
    \label{eq:rem:I-plus-II-error}
    \begin{split}
      & \lp \vvna(\tau) - \vvn(\tau) \rp \int_{[X_{(1)}, \tau]} d \lp \Fn - \FFn \rp
      +   \int_{(\tau, \tau^0]} \lp \vvna - \vvn \rp d\lp \Fn - \FFn \rp   \\
      & \quad +   \lp \vvn(\tau^0) - \vvna(\tau^0) \rp \lp \Fn(\tau^0) - \FFna(\tau^0) \rp.
    \end{split}
  \end{equation}
  By
  Proposition~7.1 of \cite{Doss-Wellner:2016ModeConstrained}
  and Lemma~4.5 of \cite{BRW2007LCasymp},  %
  $\sup_{t \in [\tau, \tau^0]}
  \left| \vvna(t)-\vvn(t) \right| = O_p(n^{-2/5})$.  By Corollary~2.5 of
  \cite{DR2009LC},
  $$\left| \int_{[X_{(1)}, \tau]} d \lp \Fn - \FFn \rp
  \right| \le 1/n, $$
  so the first term in the above display is
  $O_p(n^{-7/5})$.  Similarly, by
  Corollary~2.7B of \cite{Doss-Wellner:2016ModeConstrained},
  $\left| \Fn(\tau^0) - \FFna(\tau^0) \right| \le 1/n$, so the last term in
  the previous display is $O_p(n^{-7/5})$.  \label{page:Op-seven-fifths}
  We
  can also see that the middle term in the previous display equals
  \begin{equation}
    \label{eq:rem:square-integral-errorterms}
    \begin{split}
      \MoveEqLeft  ( \widehat{\varphi}_n^0 - \widehat{\varphi}_n )(\tau^0) ( \FF_n - \widehat{F}_n )(\tau^0)
      - ( \widehat{\varphi}_n^0 - \widehat{\varphi}_n )(\tau) ( \FF_n - \widehat{F}_n )(\tau)\\
      & \ \  -  \  \int_{(\tau, \tau^0]} ( \FF_n - \widehat{F}_n ) (  \widehat{\varphi}_n^0 - \widehat{\varphi}_n )^{\prime}  d \lambda.
    \end{split}
  \end{equation}
  Now the middle term in the previous display is $O_p(n^{-7/5})$.  For the
  last two terms, we %
  apply Lemma~\ref{lem:rem:FFn-Fn-empirical-proc-arg}
  taking  $I = [\tau, \tau^0]$ to see that
  $$\sup_{t \in (\tau, \tau^0]} n^{3/5} \left| \int_{(\tau,t]} d  \lp     \Fn
    - \FFn \rp \right| = O_p(1).$$
  Thus, using Proposition~7.1 of \cite{Doss-Wellner:2016ModeConstrained} and
  Lemma~4.5 of \cite{BRW2007LCasymp}, we have
  \begin{equation}
    \label{eq:rem:lemma-error-integ-by-parts}
    \int_{(\tau, \tau^0]} ( \FF_n - \widehat{F}_n )
    (  \widehat{\varphi}_n^0 - \widehat{\varphi}_n )^{\prime}  d \lambda
    = O_p(n^{-4/5}) \int_{(\tau,\tau^0]} d\lambda = O_p(n^{-1}),
  \end{equation}
  so we have now shown that \eqref{eq:rem:square-integral-errorterms} is
  $O_p(n^{-1})$, so the middle term in \eqref{eq:rem:I-plus-II-error} is
  $O_p(n^{-1})$.  Thus, \eqref{eq:rem:I-plus-II-error} is $O_p(n^{-1})$, and
  since \eqref{eq:rem:I-plus-II-error} bounds \eqref{eq:rem:second-I-plus-II}
  we can conclude that
  \begin{equation}
    \label{eq:rem:6}
    \int_{X_{(1)}}^{\tau^0} \lp \vvna - \vvn \rp^2 e^{\tilde \epsilon_{n}^1} \ffna d
    \lambda =
    \int_{X_{(1)}}^{\tau^0} \lp \vvna - \vvn \rp^2 e^{\tilde \epsilon_{n}^{2}} \ffn d
    \lambda  = O_p(n^{-1}),
  \end{equation}
  and so we are done.
\end{proof}

\begin{remark}
  Note that if we computed the integrals
  in \eqref{eq:rem:6} %
  over an interval
  of length $O_p(n^{-1/5})$, by using that the corresponding integrand is
  $O_p(n^{-4/5})$ (under smoothness/curvature assumptions), the integrals would
  be $O_p(n^{-1})$.  However, \eqref{eq:rem:6} shows that the integrals are
  $O_p(n^{-1})$ over a larger interval whose length is constant or larger, with high probability. Thus we can use
  \eqref{eq:rem:6} to show that $\vvna - \vvn$ must be of order smaller than $
  O_p(n^{-2/5})$ somewhere, and this line of reasoning will in fact show that
  $T_{n,t}^1$ and $T_{n,t}^2$ are $o_p(n^{-1})$ for certain $t$ values.
\end{remark}

\begin{remark}
  Having shown \eqref{eq:rem:3}, it may seem that we can easily find a
  subinterval over which the corresponding integrals are $o_p(n^{-1})$ (or
  smaller), and that this should allow us to quickly finish up our proof.
  There is an additional difficulty, though, preventing us from naively
  letting $|t| \to \infty$: we need to control the corresponding integrals
  actually within small neighborhoods of $m$ (of order $O_p(n^{-1/5})$), not
  just arbitrarily far away from $m$.  This is because our asymptotic results
  for the limit distribution take place in $n^{-1/5}$ neighborhoods of $m$.
\end{remark}
To connect the result about $\tilde{T}_n^i$ to the title of this section
(which states $T_{n}^i = O_p(n^{-1})$), note that by
Lemma~\ref{lem:rem:epsilon-inequality}, $0 \le T_n^i \le 2 \tilde{T}_n^i =
O_p(n^{-1})$.

\subsubsection{Local and Global $o_p(n^{-1})$ Conclusion}

We will now find a subinterval $I$ such that
\begin{equation*}
  \int_{I} \lp \vvna - \vvn \rp^2 e^{\tilde \epsilon_{n}^2} \ffn d
  \lambda  = o_p(n^{-1}).
\end{equation*}
We will argue by partitioning a larger interval over which the above integral
is $O_p(n^{-1})$ into smaller subintervals.  Let $\epsilon > 0$.
Let $L > 0$ be such that
intervals of length $L n^{-1/5}$ whose endpoints converge to $m$ contain
a knot from each of $\vvn$ and $\vvna$ with probability $ 1- \epsilon$.
Also let  $\delta > 0$ and $\zeta = \delta / L$
which we take without loss of
generality to be the reciprocal of an integer. By
Proposition~7.3 of \cite{Doss-Wellner:2016ModeConstrained}, fix $M \ge L $ large
enough such that with probability
$1 - \epsilon \zeta$
for any random
variable $\xi_n \to_p m$, $[\xi_n - M n^{-1/5}, \xi_n + M n^{-1/5}]$ contains
knots of both $\vvn$ and of $\vvna$, when $n$ is large enough.
Now, each of the
intervals
\begin{equation*}
  I_{jn} := ( \tau^0 - Mjn^{-1/5},  \tau^0 - M(j-1)n^{-1/5}] \, \mbox{ for
  } \, j=1, \ldots, 1/\zeta
\end{equation*}
contains  a knot of $\vvn$ and of $\vvna$ by
taking $\xi_n$ to be
$\tau^0 - M j n^{-1/5}.$ %
There are $1 / \zeta $ such
intervals so the probability that all intervals contain a knot of both $\vvn$
and $\vvna $ is $1 - \epsilon$.
Now, let $K= O_p(1)$ be such that
$\int_{X_{(1)}}^{\tau^0}
\lp \vvna - \vvn
\rp^2 e^{\tilde \epsilon_{n}^2} \ffn d \lambda \le K n^{-1}$ for $\tau^0 < m$,
by Lemma~\ref{lem:rem:step2:Tni-Opn-1}.
In particular,
\begin{equation}
  \label{eq:rem:14}
  \int_{I_{j^*}} \lp \vvna - \vvn \rp^2 e^{\tilde \epsilon_{n}^2} \ffn d
  \lambda  :=
  \min_{j=1,\ldots,1/\zeta} \int_{I_j} \lp \vvna - \vvn \rp^2 e^{\tilde \epsilon_{n}^2} \ffn d
  \lambda  \le \zeta K n^{-1}.
\end{equation}
We next conclude by Lemma~\ref{lem:rem:local-to-global-square-integral},
since $\zeta = \delta / L$,  that
there exists a subinterval $J^* \subset I_{j^*}$ containing knots $\eta \in
S(\vvn)$ and $\eta^0 \in S(\vvna)$, such that
\begin{equation}
  \label{eq:rem:local-op-bound}
  \sup_{x \in J^*} |\vvn(x)-\vvna(x)| \le c \delta_1 n^{-2/5}
  \quad \mbox{ and } \quad
  | \eta^0 - \eta | \le c \delta_1 n^{-1/5}
\end{equation}
for a universal constant $c>0$ and where $\delta_1 \to 0$ as $\delta \to 0$.

We can now re-apply the proof of Lemma~\ref{lem:rem:step2:Tni-Opn-1}, this
time taking as our knots $\eta$ and $\eta^0$, and again assuming without loss
of generality $\eta \le \eta^0$. We again see that
\eqref{eq:rem:second-I-plus-II} is bounded above by
\eqref{eq:rem:I-plus-II-error}, and the middle term of
\eqref{eq:rem:I-plus-II-error} is bounded by
\eqref{eq:rem:square-integral-errorterms}.  Using
\eqref{eq:rem:local-op-bound}, we can conclude by
\eqref{eq:rem:lemma-error-integ-by-parts} that
\eqref{eq:rem:square-integral-errorterms} is bounded by $\delta_2
O_p(n^{-1})$, so \eqref{eq:rem:I-plus-II-error} is also, and so
\eqref{eq:rem:second-I-plus-II} is also, where $\delta_2 \to 0$ as $\delta
\to 0$.  We can conclude
\begin{equation*}
  \int_{X_{(1)}}^{\eta^0} \lp \vvna - \vvn \rp^2 e^{\tilde \epsilon_{n}^2} \ffn d
  \lambda
  \le  \tilde \delta n^{-1}.
\end{equation*}
Now $\eta^0 \ge \tau^0 - M n^{-1/5} / \zeta$, the endpoint of
$I_{1/\zeta,n}$. Thus,  take $b n^{-1/5} \ge \tau^0 - M n^{-1/5} / \zeta$, let $t_n =
m - bn^{-1/5}$  and now let
$J^* = [t_n - \tilde L n^{-1/5}, t_n]$ where $\tilde L = \max( L , 8 D /
\vvo^{(2)}(m))$, chosen so that we can apply
Lemma~\ref{lem:rem:local-to-global-square-integral}.
Then
\begin{equation}
  \label{eq:rem:Tni-op-epsilon-tildes}
  \int_{X_{(1)}}^{t_n} \lp \vvna - \vvn \rp^2 e^{\tilde \epsilon_{n}^2} \ffn d
  \lambda
  \le  \tilde \delta n^{-1}
\end{equation}
with high probability.  Analogously,
\begin{equation}
  \label{eq:rem:Tn1-op-epsilon-tildes}
  \int_{X_{(1)}}^{t_n} \lp \vvna - \vvn \rp^2 e^{\tilde \epsilon_{n}^1} \ffna d
  \lambda
  \le  \tilde \delta n^{-1}
\end{equation}
as $n \to \infty$ with high probability.
And
we can apply Lemma~\ref{lem:rem:local-to-global-square-integral}
to the interval $J^*$ to see
\begin{align}
  \Vert \vvn - \vvna \Vert_{J^*}  = \delta O_p(n^{-2/5}) , &
  \quad
  \quad
  & \Vert \ffn - \ffna
  \Vert_{J^*} \le \delta K n^{-2/5},
  \label{eq:rem:vv-ff-op-statements} \\
  \Vert \FFn - \FFna \Vert_{J^*} \le
  \delta K n^{-3/5}, &
  \quad
  \mbox{ and }
  \quad
  & |\tau - \tau^0 | \le \delta
  K n^{-1/5} , \label{eq:rem:FF-knot-op-statements}
\end{align}
where $\tau \in S(\vvn) \cap J^*$ and $\tau^0 \in S(\vvna) \cap
J^*$,  \label{page:rem:defn:tau-tau0}
and
\begin{equation}
  \label{eq:rem:vv-prime-op-statements}
  \Vert (\vvn - \vvna)' \Vert_{[\max(\tau,\tau^0) + \delta O_p(n^{-1/5}), t_n
    - \delta O_p(n^{-1/5})]} = \delta K n^{-1/5};
\end{equation}
here, $K  = O_p(1)$ and depends on $\epsilon$ and $\tilde L \equiv \tilde
L_\epsilon$, but not on $\delta$ or $t_n$.
Thus when we eventually let $\tilde \delta \to 0$, so $b
\equiv b_{\tilde \delta} \to \infty$, we can still conclude $\tilde \delta K \to 0$.
\begin{mylongform}
  \begin{longform}
    Note we cannot assume that $\vvn$ or $\vvna$ are linear on
    $[\max(\tau,\tau^0), \sup J^*]$.
  \end{longform}
\end{mylongform}
We also continue to assume, without loss of generality, that
\begin{equation*}
  \tau \le \tau^0.
\end{equation*}
Thus, here is the sense in which we mean $o_p$, for the remainder of the proof:
if we say, e.g., $E_{n,t_n}^1-E_{n,t_n}^2=o_p(n^{-1})$ we mean for any $\tilde
\delta > 0$, we may set $t_n = m-bn^{-1/5}$ and choose $b$ large enough that
$| E_{n,t_n}^1-E_{n,t_n}^2| \le  \tilde \delta K n^{-1}$ where $K$ does not
depend on $t_n$.

We can now conclude that
\begin{align}
  \label{eq:rem:tilde-Tni-op}
  \tilde T_{n,t_n}^i =
  o_p(n^{-1})
  \quad \mbox{ for } \quad
  i=1,2,
\end{align}
The difference in the definitions of $T_n^i$ (defined in
\eqref{eq:rem:defn:Tni}) and $\tilde T_n^i$ (defined in
\eqref{eq:rem:defn:tilde-Tni}), for $i=1,2$, is only in the
$e^{\epsilon^i_n}$'s and $e^{\tilde \epsilon_n^i}$'s.  These arise from
Taylor expansions of the exponential function. The definition of $T_n^i$
arises from the expansions of $R_{n,1,t}^c$ (see
\eqref{eq:rem:Rn1tc-ffna-decomp1} and \eqref{eq:rem:Rn1tc-ffn-decomp1}).
Thus, if we let $ e^x = 1 + x + 2^{-1} x^2 e^{\epsilon(x)}$ we can see that
$\epsilon_n^1(x) = \epsilon( \vvn(x) - \vvna(x))$ and $\epsilon^2_n(x) =
\epsilon(\vvna(x)-\vvn(x))$.  Let $ e^x = 1 + x e^{\tilde \epsilon(x)}.$ Then
we can see that $\tilde \epsilon_n^1(x) =\tilde \epsilon( \vvn(x) -
\vvna(x))$ and $\tilde \epsilon^2_n(x) = \tilde \epsilon(\vvna(x)-\vvn(x))$.
Now, by Lemma~\ref{lem:rem:epsilon-inequality}, for all $x \in \RR$, $
e^{\epsilon(x)} \le 2 e^{\tilde \epsilon(x)},$ so that
\begin{equation}
  \label{eq:rem:Tni-op-switch-epsilons}
  0 \le   T_{n,t_n}^i \le 2 \tilde T_{n,t_n}^i = o_p(n^{-1}), \mbox{ for } i =1,2,
\end{equation}
by  \eqref{eq:rem:tilde-Tni-op}.

\subsubsection{Return to $R_{n,1,t}^c$}

We take $t_n$ and $J^*$ as defined at the end of the previous section.
Now, if we could show that $E_{n,t_n}^1 - E_{n,t_n}^2 = o_p(n^{-1})$ then from
\eqref{eq:rem:main-An2-ineq} we could conclude that $A_{n,t_n}^i$, $i=1,2$, are
both $o_p(n^{-1})$.  If, in addition, we can show $E_{n,t_n}^1 + E_{n,t_n}^2 =
o_p(n^{-1})$, then since
\begin{equation}
  \label{eq:rem:2}
  R_{n,1,t_n}^c = (E_{n,t_n}^1 + E_{n,t_n}^2 + A_{n,t_n}^1 + A_{n,t_n}^2 + T^2_{n,t_n} -
  T_{n,t_n}^1 ) / 2
\end{equation}
by \eqref{eq:rem:Rn1tc-main-decomposition}, we could conclude $R_{n,1,t_n}^c =
o_p(n^{-1})$.  Unfortunately it is difficult to get any results about
$E_{n,t_n}^1-E_{n,t_n}^2$.
We can analyze $E_{n,t_n}^1 + E_{n,t_n}^2$, though.
The next lemma shows that the difficult terms in
$E_{n,t_n}^1 + E_{n,t_n}^2$ are $o_p(n^{-1})$.

\begin{lemma}
  \label{lem:rem:Fni-op}
  Let all terms be as defined above. For any $t < m $ let $F_{n,t}^1 =
  \int_{(\tau, t]} \vvnt{t} d(\FFn - \Fn)$ and $F_{n,t}^2 = \int_{(\tau^0,t]}
  \vvnat{t} d(\Fn - \FFna)$.  Then
  \begin{equation*}
    F_{n,t_n}^1 + F_{n,t_n}^2 = o_p(n^{-1}).
  \end{equation*}
\end{lemma}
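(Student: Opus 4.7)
\textbf{Proof plan for Lemma~\ref{lem:rem:Fni-op}.} The plan is to integrate by parts in each of $F^1_{n,t_n}$ and $F^2_{n,t_n}$, showing first that the boundary terms produced are $o_p(n^{-1})$ by the knot estimates $|(\FFn-\Fn)(\tau)|\le 1/n$ and $|(\Fn-\FFna)(\tau^0)|\le 1/n$, and then that the sum of the resulting integrals, after an algebraic rearrangement exploiting the near-coincidence of $\vvn$ and $\vvna$ on $J^*$, decomposes into three pieces each of which is $\delta K O_p(n^{-1})$. The crucial additional observation is that since $t_n=m-bn^{-1/5}$, $J^*\subset[m-O(n^{-1/5}),m]$, and $\vvo'(m)=0$, one has $\|\vvn'\|_{J^*},\|\vvna'\|_{J^*}=O_p(n^{-1/5})$; this one extra power of $n^{-1/5}$ is what allows the product bounds to reach the desired $n^{-1}$ scale.

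First, since $\vvnt{t_n}(t_n)=\vvnat{t_n}(t_n)=0$ and $\vvnt{t_n}$ is absolutely continuous with weak derivative $\vvn'$ on $(\tau,t_n]$, integration by parts gives
\[
F^1_{n,t_n}=(\vvn(t_n)-\vvn(\tau))(\FFn-\Fn)(\tau)-\int_{(\tau,t_n]}(\FFn-\Fn)\vvn'\,dx,
\]
and analogously for $F^2_{n,t_n}$ with $\vvna,\tau^0,\Fn-\FFna$ replacing $\vvn,\tau,\FFn-\Fn$. By Corollary~2.5 of \cite{DR2009LC} and Corollary~2.12 of \cite{Doss-Wellner:2016ModeConstrained}, $|(\FFn-\Fn)(\tau)|,|(\Fn-\FFna)(\tau^0)|\le 1/n$, while $|\vvn(t_n)-\vvn(\tau)|\le \|\vvn'\|_{J^*}\cdot\tilde L n^{-1/5}=O_p(n^{-2/5})$ and similarly for $\vvna$. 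Hence both boundary terms are $O_p(n^{-7/5})=o_p(n^{-1})$, independently of $\delta$.

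For the integral portion, split the $F^1$-integral at $\tau^0$ and apply, on $(\tau^0,t_n]$, the algebraic identity
\[
(\FFn-\Fn)\vvn'+(\Fn-\FFna)\vvna'=(\FFn-\FFna)\vvna'+(\FFn-\Fn)(\vvn'-\vvna'),
\]
so that the integral contribution to $F^1_{n,t_n}+F^2_{n,t_n}$ is (up to sign) the sum of (a) $\int_{(\tau,\tau^0]}(\FFn-\Fn)\vvn'\,dx$, bounded by $\|\FFn-\Fn\|_{J^*}\|\vvn'\|_{J^*}|\tau^0-\tau|\le O_p(n^{-3/5})\cdot O_p(n^{-1/5})\cdot\delta Kn^{-1/5}$ using \eqref{eq:rem:FF-knot-op-statements}; (b) $\int_{(\tau^0,t_n]}(\FFn-\FFna)\vvna'\,dx$, bounded by $\|\FFn-\FFna\|_{J^*}\|\vvna'\|_{J^*}\cdot \tilde L n^{-1/5}\le \delta Kn^{-3/5}\cdot O_p(n^{-1/5})\cdot O(n^{-1/5})$ using \eqref{eq:rem:vv-ff-op-statements}--\eqref{eq:rem:FF-knot-op-statements}; and (c) $\int_{(\tau^0,t_n]}(\FFn-\Fn)(\vvn'-\vvna')\,dx$, which on the bulk portion of $(\tau^0,t_n]$ of length $O(n^{-1/5})$ is bounded by $O_p(n^{-3/5})\cdot \delta Kn^{-1/5}\cdot O(n^{-1/5})$ via \eqref{eq:rem:vv-prime-op-statements}, while on the complementary $\delta O_p(n^{-1/5})$-shells near the endpoints the trivial bound $\|\vvn'-\vvna'\|\le \|\vvn'\|+\|\vvna'\|=O_p(n^{-1/5})$ combined with the $\delta$-sized shell length contributes another $\delta KO_p(n^{-1})$. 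Here $\|\FFn-\Fn\|_{J^*}=O_p(n^{-3/5})$ comes from combining the knot bound $|(\FFn-\Fn)(\tau)|\le 1/n$ with Lemma~\ref{lem:rem:FFn-Fn-empirical-proc-arg}.

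Summing (a), (b), (c) with the boundary contributions gives $F^1_{n,t_n}+F^2_{n,t_n}=\delta KO_p(n^{-1})$, which by letting $b\to\infty$ and $\delta\to 0$ is $o_p(n^{-1})$ in the sense defined on page~\pageref{page:rem:defn:tau-tau0}. The auxiliary claim $\|\vvn'\|_{J^*}=O_p(n^{-1/5})$ follows because concavity of $\vvn$ makes $\vvn'$ monotone on $J^*$ (so its sup is attained at a deterministic endpoint), while at any fixed $n^{-1/5}$-scale location $x_0$ near $m$ the local limit theory gives $\vvn'(x_0)-\vvo'(x_0)=O_p(n^{-1/5})$ and $\vvo'(x_0)=\vvo''(m)(x_0-m)+o(x_0-m)=O(n^{-1/5})$ by $\vvo'(m)=0$ and Assumption~\ref{CurvatureAtTheMode}; identical reasoning handles $\vvna'$. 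The main obstacle is really the bookkeeping: identifying the rearrangement that exposes a factor of $\delta K$ in each of the three pieces, and recognizing that the $\delta$-shells where the sharp bound on $\vvn'-\vvna'$ is not available are precisely short enough to absorb the missing $\delta$ factor through their length.
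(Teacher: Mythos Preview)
Your proof is correct and uses essentially the same approach as the paper: both integrate by parts and rely on the knot bounds $|(\FFn-\Fn)(\tau)|,\,|(\Fn-\FFna)(\tau^0)|\le 1/n$, the $O_p(n^{-1/5})$ control on $\|\vvn'\|_{J^*}$ and $\|(\vvna)'\|_{J^*}$ coming from $\vvo'(m)=0$, the estimates \eqref{eq:rem:vv-ff-op-statements}--\eqref{eq:rem:vv-prime-op-statements}, and Lemma~\ref{lem:rem:FFn-Fn-empirical-proc-arg}. The only difference is organizational---the paper first rearranges $F^1_{n,t_n}+F^2_{n,t_n}$ into three pieces (its display \eqref{eq:int_tau0-t-vvntt}) and then integrates two of them by parts, whereas you integrate each $F^i$ by parts first and then rearrange via your algebraic identity---and your explicit treatment of the $\delta$-shells in piece~(c) is in fact slightly more careful than the paper's one-line appeal to \eqref{eq:rem:vv-prime-op-statements}.
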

\begin{proof}
  For the proof, denote $t \equiv t_n$ and recall that we assume $\tau \le
  \tau^0$.  We see
  \begin{align*}
    \MoveEqLeft \int_{(\tau, t]} \vvnt{t} d(\FFn - \Fn) + \int_{(\tau^0, t]}
    \vvnat{t} d(\Fn - \FFna) \\
    & = \int_{(\tau^0,t]} \vvnt{t} d(\FFn - \Fn) + \int_{(\tau, \tau^0]}
    \vvnt{t} d(\FFn -\Fn) \\
    & \quad - \lp \int_{(\tau^0,t]} \vvnt{t}  d(\FFna - \Fn)
    + \int_{(\tau^0,t]} (\vvnat{t}-\vvnt{t} ) d(\FFna-\Fn) \rp
  \end{align*}
  which equals
  \begin{align}
    \label{eq:int_tau0-t-vvntt}
    \int_{(\tau^0, t]} \vvnt{t} (\ffn-\ffna) d\lambda
    - \int_{(\tau^0, t]} (\vvnat{t} - \vvnt{t} ) d(\FFn - \Fn)
    + \int_{(\tau, \tau^0]} \vvnt{t} d(\FFn - \Fn).
  \end{align}
  Note $\Vert \vvnat{t} \Vert_{J^*} = O_p(n^{-2/5})$.  This follows because
  $n^{1/5}(\tau^0 - t) = O_p(1)$ by Proposition~7.3
  of
  \cite{Doss-Wellner:2016ModeConstrained}, and because $\Vert (\vvna)'
  \Vert_{J^*} = n^{-1/5} O_p(1)$ by Corollary~7.1 of
  \cite{Doss-Wellner:2016ModeConstrained}, since $\vvo'(m)=0$. In both cases
  the $O_p(1)$ does not depend on $t_n$. Thus, the first term in
  \eqref{eq:int_tau0-t-vvntt} is $o_p(n^{-1})$, since $ \Vert \ffn - \ffna
  \Vert_{J^*} = o_p(n^{-2/5})$.  We will rewrite the other two terms of
  \eqref{eq:int_tau0-t-vvntt} with integration by parts.  The negative of the
  middle term, $ \int_{(\tau^0,t]} (\vvnat{t} - \vvnt{t}) d(\FFn - \Fn)$,
  equals
  \begin{equation}
    \label{eq:ffn-fnvvn-vvnttt}
    ((\FFn-\Fn)(\vvnat{t}-\vvnt{t}))(\tau^0,t]
    - \int_{(\tau^0,t]} (\FFn - \Fn ) ((\vvna)' - \vvn')
    d\lambda.
  \end{equation}
  Note $\Vert \FFn - \Fn \Vert_{J^*} = O_p(n^{-3/5})$ by
  Lemma~\ref{lem:rem:FFn-Fn-empirical-proc-arg}.  Thus the first term in
  \eqref{eq:ffn-fnvvn-vvnttt} is $o_p(n^{-1})$ because $\Vert \vvna - \vvn
  \Vert_{J^*} = o_p(n^{-2/5})$. The second term in
  \eqref{eq:ffn-fnvvn-vvnttt} is $o_p(n^{-1})$ because
  \eqref{eq:rem:vv-prime-op-statements} implies that $\int_{[\tau^0,t]} |
  (\vvn - \vvn)'| d\lambda = o_p(n^{-2/5})$, and, as already noted, $\Vert
  \FFn - \Fn \Vert_{J^*} = O_p(n^{-3/5})$.
  Thus \eqref{eq:ffn-fnvvn-vvnttt} is
  $o_p(n^{-1})$.

  We have left the final term of \eqref{eq:int_tau0-t-vvntt}.  This can be
  bounded by
  \begin{align}
    \label{eq:lv-ffn-fn}
    \lv (\FFn - \Fn)( \vvnt{t}) (\tau, \tau^0]
    - \int_{(\tau, \tau^0]} (\FFn - \Fn) \vvn' d\lambda \rv,
  \end{align}
  and the second term above bounded by $\Vert \FFn - \Fn \Vert_{[\tau,
    \tau^0]} \vvn'(\tau+) (\tau^0-\tau) = o_p(n^{-1})$ because $\Vert \FFn -
  \Fn \Vert_{J^*} = O_p(n^{-3/5})$, $\vvn'(\tau+) = O_p(n^{-1/5})$ (since
  $\vvo'(m)=0$, and recall $\vvn$ is linear on $[\tau, \tau^0]$), and
  $(\tau^0-\tau) = o_p(n^{-1/5})$.  The first term of \eqref{eq:lv-ffn-fn} is
  $o_p(n^{-1})$ because in fact $\Vert \FFn - \Fn \Vert_{[\tau, \tau^0]} =
  o_p(n^{-3/5})$, by Lemma~\ref{lem:rem:FFn-Fn-empirical-proc-arg} (since
  $|\tau-\tau^0| = o_p(n^{-1/5})$), and $\Vert \vvnt{t} \Vert_{[\tau,
    \tau^0]} = O_p(n^{-2/5})$.  Thus \eqref{eq:int_tau0-t-vvntt} is
  $o_p(n^{-1})$ so we are done.
\end{proof}

For $t < m$, define
\begin{equation*}
  \label{eq:rem:defn:Gnt1}
  G_{n,t}^1 =
  \vvn(t) ( \FFn(t) - \FFna(t) )
  + (\vvn(\tau)    - \vvn(t)) ( \FFn(\tau) - \Fn(\tau) )
\end{equation*}
and
\begin{equation*}
  \label{eq:rem:defn:Gnt2}
  G_{n,t}^2 =
  \vvna(t) ( \FFn(t) - \FFna(t))
  + (\vvna(\tau^0)
  - \vvna(t)) (\Fn(\tau^0) - \FFna(\tau^0)),
\end{equation*}
so that $G_{n,t_n}^i = E_{n,t_n}^i -F_{n,t_n}^i$ for $i=1,2$ (recalling
the definitions of $E_{n,t_n}^i$ in
\eqref{eq:rem:defn:Ent1}
and
\eqref{eq:rem:defn:Ent2}).
The key idea now is that the first term in $G_{n,t}^i$ matches up with
$R_{n,1,\twoArgs}$.  To make this explicit, we need to define a one-sided
version of $R_{n,1,\twoArgs}$.  Since both $\ffn$ and $\ffna$
integrate to $1$, note for any $t_1 \le m \le t_2$, that
\begin{equation*}
  \label{eq:rem:Rn1-integral-identity}
  R_{n,1,\twoArgs} = - \vvo(m) \int_{D_{n,\twoArgs}^c} ( \ffn - \ffna) d\lambda;
\end{equation*}
thus, define
\begin{equation}
  \label{eq:rem:defn:Rn1t-one-sided}
  R_{n,1,t_1} = - \vvo(m) (\FFn(t_1) - \FFna(t_1)).
\end{equation}
The corresponding definition for the right side is $-\vvo(m)
\int_{t_2}^{X_{(n)}} (\ffn - \ffna ) d\lambda$, which when summed with
\eqref{eq:rem:defn:Rn1t-one-sided} yields $R_{n,1,\twoArgs}$.

\begin{lemma}
  \label{lem:rem:Gni-Rn1-op}
  Let all terms be as defined above.  We then have for $i=1,2$,
  \begin{equation*}
    G_{n,t_n}^i + R_{n,1,t_n} = o_p(n^{-1}).
  \end{equation*}
\end{lemma}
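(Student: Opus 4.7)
The plan is to substitute $R_{n,1,t_n} = -\vvo(m)(\FFn(t_n)-\FFna(t_n))$ into the definitions of $G_{n,t_n}^i$ and regroup the terms containing the CDF difference. For $i=1$ this yields
\begin{equation*}
  G_{n,t_n}^1 + R_{n,1,t_n}
  = \bigl(\vvn(t_n) - \vvo(m)\bigr)\bigl(\FFn(t_n) - \FFna(t_n)\bigr)
  + \bigl(\vvn(\tau) - \vvn(t_n)\bigr)\bigl(\FFn(\tau) - \Fn(\tau)\bigr),
\end{equation*}
and for $i=2$ the analogous identity in which $\vvn$, $\tau$, and $\FFn(\tau)-\Fn(\tau)$ are replaced by $\vvna$, $\tau^0$, and $\Fn(\tau^0)-\FFna(\tau^0)$.

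I would first dispose of the ``mixed'' second summand using the knot-point characterizations. Corollary~2.5 of \cite{DR2009LC} gives $|\FFn(\tau)-\Fn(\tau)| \le 1/n$ at the knot $\tau \in S(\vvn)$, and Corollary~2.12 of \cite{Doss-Wellner:2016ModeConstrained} gives $|\Fn(\tau^0)-\FFna(\tau^0)| \le 1/n$ at $\tau^0 \in S(\vvna)$. Combined with $|\vvn(\tau)-\vvn(t_n)|, |\vvna(\tau^0)-\vvna(t_n)| = O_p(n^{-2/5})$, which follows from $\tau,\tau^0,t_n \in J^*$, $|\tau-t_n|, |\tau^0-t_n| = O(n^{-1/5})$, and the derivative bound $\Vert\vvn'\Vert_{J^*} \vee \Vert(\vvna)'\Vert_{J^*} = O_p(n^{-1/5})$ (Corollary~5.4 of \cite{Doss-Wellner:2016ModeConstrained} together with $\vvo'(m)=0$), this summand is $O_p(n^{-7/5}) = o_p(n^{-1})$.

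For the principal summand $(\vvn(t_n)-\vvo(m))(\FFn(t_n)-\FFna(t_n))$ (and its $\vvna$-analogue), I would bound the two factors separately. By the triangle inequality
\begin{equation*}
  |\vvn(t_n)-\vvo(m)|
  \le |\vvn(t_n)-\vvo(t_n)| + |\vvo(t_n)-\vvo(m)|,
\end{equation*}
the first piece is $O_p((n^{-1}\log n)^{2/5})$ by Theorem~4.6.B of \cite{Doss-Wellner:2016ModeConstrained} (and by Theorem~4.7.B for the parallel $\vvna$ bound), while the second is $O(b^2 n^{-2/5})$ by Taylor expansion of $\vvo$ about $m$ using $\vvo'(m)=0$. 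For the CDF factor I would invoke $|\FFn(t_n)-\FFna(t_n)| \le \delta K n^{-3/5}$ from \eqref{eq:rem:FF-knot-op-statements}, since $t_n$ is the right endpoint of $J^*$.

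The main obstacle will be producing the $o_p(n^{-1})$ conclusion in the parameter-sensitive sense specified just before the lemma (bounded by $\tilde\delta K n^{-1}$ with $\tilde\delta$ arbitrary). A naive multiplication of the two factor bounds gives order $b^2 \delta K n^{-1}$, which does not vanish since the construction of $J^*$ forces $b \gtrsim 1/\delta$. To handle this I would refine the estimate on $|\FFn(t_n)-\FFna(t_n)|$ by decomposing it through the knots $\tau, \tau^0$, using the two knot-point inequalities above, a Glivenko--Cantelli modulus-of-continuity estimate for $\Fn - F_0$ on $[\tau,\tau^0]$, and the near-cancellation between the integrals of $\ffn$, $\ffna$, and $F_0$ over $[\tau,t_n]$ and $[\tau^0,t_n]$. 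The resulting bound should be essentially $b$-independent, so that sending $\delta \to 0$ (and correspondingly $b \to \infty$) outweighs the quadratic growth from $|\vvo(t_n)-\vvo(m)|$ and yields the required $\tilde\delta K n^{-1}$ control.
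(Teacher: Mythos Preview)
Your decomposition and your treatment of the second summand match the paper's proof: the paper also bounds that term by $O_p(n^{-7/5})$ via the knot-point inequalities (it cites Corollaries~2.4 and~2.12 of \cite{Doss-Wellner:2016ModeConstrained}) combined with Lemma~4.5 of \cite{BRW2007LCasymp} and Corollary~5.4 of \cite{Doss-Wellner:2016ModeConstrained} for the $O_p(n^{-2/5})$ control on $\vvn(\tau)-\vvn(t_n)$ and $\vvna(\tau^0)-\vvna(t_n)$.

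The divergence, and the gap in your argument, is in the principal summand $(\vvn(t_n)-\vvo(m))(\FFn(t_n)-\FFna(t_n))$. The paper does \emph{not} split the first factor via the triangle inequality. It invokes Lemma~4.5 of \cite{BRW2007LCasymp} directly to assert $\vvn(t_n)-\vvo(m)=O_p(n^{-2/5})$ with an $O_p$ constant that does not depend on~$b$, explicitly stressing that this is a pointwise claim at the single sequence $t_n\to m$ (using $\vvo'(m)=0$), not a uniform one over a $b$-sized window. Multiplying by $\FFn(t_n)-\FFna(t_n)=o_p(n^{-3/5})$ from \eqref{eq:rem:FF-knot-op-statements} then finishes the proof with no $b^{2}$ contamination.

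Your route introduces the $b^{2}$ factor by splitting off $|\vvo(t_n)-\vvo(m)|$, and your proposed repair --- sharpening the CDF bound to be essentially $b$-independent --- is neither carried out nor evidently available from the assembled tools: the $\delta Kn^{-3/5}$ bound in \eqref{eq:rem:FF-knot-op-statements} already comes from Lemma~\ref{lem:rem:local-to-global-square-integral}, and the decomposition you sketch through $\tau,\tau^{0}$ and a modulus-of-continuity estimate on $\Fn-\FFo$ (cf.\ Lemma~\ref{lem:rem:FFn-Fn-empirical-proc-arg}) still returns a bound of order $n^{-3/5}$ with constant governed by the interval length, not one decaying fast enough to offset $b^{2}$. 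The paper's resolution puts the $b$-uniformity burden on the \emph{first} factor, not the second; that is the step you are missing.
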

\begin{proof}
  The second terms in the definitions of $G_{n,t_n}^i$, $i=1,2$, are both
  $O_p(n^{-7/5})$ since $ \vvn(\tau) - \vvn(t)$ and $\vvna(\tau^0) -
  \vvna(t)$ are both $O_p(n^{-2/5})$ by Lemma~4.5 of \cite{BRW2007LCasymp}
  and Corollary~5.4 of \cite{Doss-Wellner:2016ModeConstrained}, and the terms
  $\FFn(\tau) -\Fn(\tau)$ and $\FFna(\tau^0) - \Fn(\tau^0)$ are both
  $O_p(n^{-1})$ by Corollary~2.4 and Corollary~2.12 of
  \cite{Doss-Wellner:2016ModeConstrained}.

  Thus we consider the first terms of $G_{n,t_n}^i$, in sum with
  $R_{n,1,t_n}$.   Consider the case $i=1$; we see that
  \begin{equation}
    \label{eq:rem:vvnt_n-ffnt_n-ffnat}
    \vvn(t_n) (\FFn(t_n) - \FFna(t_n)) + R_{n,1,t_n}
    = (\vvn(t_n) - \vvo(m)) (\FFn(t_n) - \FFna(t_n)),
  \end{equation}
  and $(\vvn(t_n) - \vvo(m)) = O_p(n^{-2/5})$ by Lemma~4.5 of
  \cite{BRW2007LCasymp} since $\vvo'(m)=0$.
  Crucially, we are not making a claim that $\vvn$ is close to $\vvo(m)$
  uniformly  over an interval, just a claim at the point $t_n$ satisfying
  $t_n \to m$, so the $O_p$ statement does not depend on $b$.
  Since  $\FFn(t_n) - \FFna(t_n) =
  o_p(n^{-3/5})$ by \eqref{eq:rem:FF-knot-op-statements}, we conclude that
  \eqref{eq:rem:vvnt_n-ffnt_n-ffnat} is $o_p(n^{-1})$.  Identical reasoning
  applies to the case $i=2$, using Corollary~5.4 of
  \cite{Doss-Wellner:2016ModeConstrained}. Thus we are done.
\end{proof}

Thus by Lemmas~\ref{lem:rem:Fni-op} and \ref{lem:rem:Gni-Rn1-op},
\begin{equation}
  \label{eq:rem:E1-+-E2-op}
  R_{n,1,t_n} +   (E^1_{n,t_n}  +  E^2_{n,t_n})/2 = o_p(n^{-1}).
\end{equation}

Now we decompose the $A^i_{n,t_n}$ terms.  Let
\begin{align*}
  & B_{n,t_n}^1 = \int \vvnt{\tau} d(\Fn - \FFna), \\
  & B_{n,t_n}^2 = \int \vvnat{\tau^0} d(\FFn - \Fn).
\end{align*}
where $\tau$ and $\tau^0$ are as previously defined (on
page~\pageref{page:rem:defn:tau-tau0})
and
\begin{align*}
  & C_{n,t_n}^1 = \int \vvn'(t_n-) (x-t_n)_- \, d(\Fn - \FFna) \\
  & C_{n,t_n}^2 = \int (\vvna)'(t_n-) (x-t_n)_- \, d(\FFn -  \Fn).
\end{align*}
Then, for $i=1,2$, by the definitions of $\tau$ and $\tau^0$,
\begin{align*}
  A_n^i = B_n^i + C_n^i,
\end{align*}
and note that
\begin{equation}
  \label{eq:rem:1}
  B_{n,t_n}^1 = \int (\vvnt{\tau} - \vvnat{\tau^0}) d(\Fn - \FFna)
  \quad \mbox{ and } \quad
  B_{n,t_n}^2 = \int (\vvnat{\tau^0} - \vvnt{\tau} ) d(\FFn  - \Fn),
\end{equation}
by the characterization theorems, Theorem~2.2 of
\cite{Doss-Wellner:2016ModeConstrained} with $\Delta = \pm \vvnat{\tau^0}$
and Theorem~2.8 of \cite{Doss-Wellner:2016ModeConstrained} with $\Delta = \pm
\vvnt{\tau}$.  Perhaps strangely, it seems it is easier to analyze $B_n^1-
B_n^2$ than $B_n^1+B_n^2$, and $C_n^1 + C_n^2$ rather than $C_n^1 - C_n^2$.
Perhaps more strangely, this will suffice.  Again by Theorem~2.2 of
\cite{Doss-Wellner:2016ModeConstrained} with $\Delta = \vvnt{\tau}$ and
Theorem~2.8 of \cite{Doss-Wellner:2016ModeConstrained} with $\Delta =
\vvnat{\tau^0}$, $B_{n,t_n}^1 \le 0$ and $ B_{n,t_n}^2 \ge 0 ,$
so
\begin{equation}
  \label{eq:b_n-t_n1-b_n}
  B_{n,t_n}^1  - B_{n,t_n}^2 \le
  \begin{cases}
    -B_{n,t_n}^2 \le 0 \\
    B_{n,t_n}^1 \le 0.
  \end{cases}
\end{equation}
Thus, if we can show $B_{n,t_n}^1 - B_{n,t_n}^2 = o_p(n^{-1})$ then $B_{n,t_n}^i=o_p(n^{-1})$,
$i=1,2,$ so $B_{n,t_n}^1+B_{n,t_n}^2 = o_p(n^{-1})$. We do this in the
following lemma.

\begin{lemma}
  \label{lem:rem:Bni-op}
  With all terms as defined above,
  \begin{equation*}
    B_{n,t_n}^1 - B_{n,t_n}^2 = o_p(n^{-1}),
  \end{equation*}
  and thus
  \begin{equation*}
    B_{n,t_n}^1+B_{n,t_n}^2 = o_p(n^{-1}).
  \end{equation*}
\end{lemma}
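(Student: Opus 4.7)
The plan is to prove $B_{n,t_n}^1 - B_{n,t_n}^2 = o_p(n^{-1})$. The second statement $B_{n,t_n}^1 + B_{n,t_n}^2 = o_p(n^{-1})$ then follows from the sandwich \eqref{eq:b_n-t_n1-b_n}: since $B_{n,t_n}^1 \le 0 \le B_{n,t_n}^2$ and $B_{n,t_n}^1 - B_{n,t_n}^2 \le \min(B_{n,t_n}^1, -B_{n,t_n}^2)$, we have $|B_{n,t_n}^i| \le |B_{n,t_n}^1 - B_{n,t_n}^2|$ for $i=1,2$.

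Starting from \eqref{eq:rem:1}, the $d\Fn$ pieces cancel under subtraction, giving
\begin{equation*}
  B_{n,t_n}^1 - B_{n,t_n}^2 = \int \psi\, d(\FFn - \FFna), \qquad \psi := \vvnt{\tau} - \vvnat{\tau^0}.
\end{equation*}
The function $\psi$ is continuous on $\RR$ (in particular at $\tau$ and $\tau^0$), supported on $(-\infty, \tau^0]$ with $\psi(\tau^0)=0$, and equals $\vvna(\tau^0) - \vvna$ on $[\tau,\tau^0]$. Since $\FFn - \FFna$ is supported on $[X_{(1)}, X_{(n)}]$, integration by parts yields
\begin{equation*}
  B_{n,t_n}^1 - B_{n,t_n}^2 = -\int_{X_{(1)}}^{\tau^0} (\FFn - \FFna)(x)\,\psi'(x)\,dx,
\end{equation*}
where $\psi' = -(\vvna)'$ on $(\tau, \tau^0)$ and $\psi' = \vvn' - (\vvna)'$ on $(X_{(1)}, \tau)$.

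On $(\tau, \tau^0)$, combining $\Vert \FFn - \FFna\Vert_{J^*} = o_p(n^{-3/5})$ from \eqref{eq:rem:FF-knot-op-statements}, $\Vert (\vvna)'\Vert_{J^*} = O_p(n^{-1/5})$ from Corollary~5.4 of \cite{Doss-Wellner:2016ModeConstrained} together with $\vvo'(m)=0$, and $(\tau^0 - \tau) = o_p(n^{-1/5})$ gives an $o_p(n^{-1})$ bound. For the ``non-local'' contribution over $(X_{(1)}, \tau)$, I would integrate by parts once more:
\begin{equation*}
  -\int_{X_{(1)}}^\tau (\FFn - \FFna)(\vvn - \vvna)'\,dx
  = -\bigl[(\FFn - \FFna)(\vvn - \vvna)\bigr]_{X_{(1)}}^{\tau} + \int_{X_{(1)}}^\tau (\vvn - \vvna)(\ffn - \ffna)\,dx.
\end{equation*}
The boundary terms are negligible: at $X_{(1)}$ we have $\FFn(X_{(1)}) = \FFna(X_{(1)})=0$, while at $\tau \in J^*$ the product is bounded by $o_p(n^{-3/5}) \cdot o_p(n^{-2/5}) = o_p(n^{-1})$ using \eqref{eq:rem:vv-ff-op-statements}--\eqref{eq:rem:FF-knot-op-statements}.

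The main obstacle is the remaining integral $\int_{X_{(1)}}^\tau (\vvn - \vvna)(\ffn - \ffna)\,dx$, since $\vvn' - (\vvna)'$ is not directly controlled to the left of $\tau$ at a rate better than $O_p(1)$ pointwise. The key observation is that the pointwise Taylor identity $\ffn - \ffna = -\ffn(\vvna - \vvn) e^{\tilde\epsilon_n^2}$ (with $\tilde\epsilon_n^2$ as in \eqref{eq:rem:second-I-plus-II}) produces the exact equality
\begin{equation*}
  \int_{X_{(1)}}^\tau (\vvn - \vvna)(\ffn - \ffna)\,dx
  = \int_{X_{(1)}}^\tau (\vvn - \vvna)^2 \ffn\, e^{\tilde\epsilon_n^2}\,dx
  \le \int_{X_{(1)}}^{t_n} (\vvn - \vvna)^2 \ffn\, e^{\tilde\epsilon_n^2}\,dx = o_p(n^{-1}),
\end{equation*}
using $\tau \le t_n$, non-negativity of the integrand, and \eqref{eq:rem:Tni-op-epsilon-tildes}. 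The second integration by parts thus converts the problematic derivative-difference integral directly into the weighted $L^2$ integral already bounded globally, completing the proof.
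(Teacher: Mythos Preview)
Your proof is correct and follows essentially the same route as the paper's. Both arguments reduce the non-local piece over $(X_{(1)},\tau)$ to the weighted $L^2$ integral $\int_{X_{(1)}}^{\tau}(\vvn-\vvna)^2 e^{\tilde\epsilon_n}\,\widehat f\,d\lambda$ via the Taylor identity of Lemma~\ref{lem:f-to-varphi-taylor}, and then invoke the global $o_p(n^{-1})$ bound \eqref{eq:rem:Tni-op-epsilon-tildes}; the only difference is that the paper writes $\int(\vvnt{\tau}-\vvnat{\tau})\,d(\FFn-\FFna)=\int_{X_{(1)}}^{\tau}(\vvn-\vvna)(\ffn-\ffna)\,d\lambda-\text{(boundary term)}$ directly, whereas you take a slightly longer detour through two integrations by parts before arriving at the same integral.
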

\begin{proof}

  Now
  by \eqref{eq:rem:1}
  \begin{equation*}
    \begin{split}
      B_{n,t_n}^1-B_{n,t_n}^2 & = \int (\vvnt{\tau} - \vvnat{\tau} ) d(\FFn - \FFna)
      + \int (\vvnat{\tau} - \vvnat{\tau^0}) d( \FFn - \FFna)
    \end{split}
  \end{equation*}
  which equals
  \begin{equation}
    \label{eq:b_n1-b_n2-T-type-term}
    \int (\vvnt{\tau} - \vvnat{\tau}) d(\FFn - \FFna)
    + (\vvna(\tau^0)-\vvna(\tau)) (\FFn - \FFna)(\tau)
    + \int_\tau^{\tau^0} \vvnat{\tau^0} d(\FFn - \FFna).
  \end{equation}
  The first term in \eqref{eq:b_n1-b_n2-T-type-term} equals, applying
  \eqref{eq:f-to-varphi-taylor-1},
  \begin{equation}
    \label{eq:int_x_1tau-vvn-vvna2}
    \int_{X_{(1)}}^{\tau} (\vvn - \vvna)^2
    e^{\tilde{\epsilon}_n^1} \ffna d\lambda -
    ((\vvn-\vvna)(\FFn-\FFna))(\tau),
  \end{equation}
  where $\tilde{\epsilon}_n^1$ is identical to $\tilde{\epsilon}_n^1$ in
  Lemma~\ref{lem:rem:step2:Tni-Opn-1}, and thus by
  \eqref{eq:rem:Tni-op-epsilon-tildes} the first term in
  \eqref{eq:int_x_1tau-vvn-vvna2} is $o_p(n^{-1})$.  The second term is also
  $o_p(n^{-1})$ since $\Vert \vvn - \vvna \Vert_{J^*} = o_p(n^{-2/5})$ and
  $\Vert \FFn - \FFna \Vert_{J^*} = o_p(n^{-3/5})$.  This also shows that the
  middle terms in \eqref{eq:b_n1-b_n2-T-type-term} is $o_p(n^{-1})$.  To see
  the last term is $o_p(n^{-1})$, recall $\Vert \vvnat{\tau^0} \Vert_{J^*} =
  O_p(n^{-2/5})$ by Corollary~5.4 of \cite{Doss-Wellner:2016ModeConstrained},
  using that $\vvo'(m)=0$.  Since $|\tau^0 - \tau| = o_p(n^{-1/5})$ and $\Vert
  \ffn - \ffna \Vert_{J^*} = o_p(n^{-2/5})$ we see the last term of
  \eqref{eq:b_n1-b_n2-T-type-term} is $o_p(n^{-1})$, so
  \eqref{eq:b_n1-b_n2-T-type-term} is $o_p(n^{-1})$, so $B^1_{n,t_n} -
  B^2_{n,t_n} = o_p(n^{-1})$.  By \eqref{eq:b_n-t_n1-b_n}, $B^1_{n,t_n} +
  B^2_{n,t_n} = o_p(n^{-1})$, so we are done.
\end{proof}

We now turn our attention to
$  C_{n,t_n}^1 + C_{n,t_n}^2$.
\begin{lemma}
  With all terms as defined above,
  \begin{equation*}
    C^1_{n,t_n} + C^2_{n,t_n} = o_p(n^{-1}).
  \end{equation*}
\end{lemma}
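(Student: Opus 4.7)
The plan mirrors Lemma~\ref{lem:rem:Bni-op}: integrate by parts in each $C_{n,t_n}^i$ to convert the $(x-t_n)_-$-integrals into integrals of CDF-differences over $[X_{(1)}, t_n]$, then use the characterizations of $\vvn$ and $\vvna$ at their knots $\tau,\tau^0\in J^*$ to kill these integrals on $[X_{(1)},\max(\tau,\tau^0)]$, controlling the short residuals on $J^*$ by \eqref{eq:rem:FF-knot-op-statements}--\eqref{eq:rem:vv-prime-op-statements}.  Since $(x-t_n)_-$ vanishes at $t_n$, and since $\FFn$ and $\FFna$ are both supported on $[X_{(1)}, X_{(n)}]$ so that all boundary terms at $X_{(1)}^-$ vanish, writing $\Fn-\FFna = (\Fn-\FFn)+(\FFn-\FFna)$ and regrouping gives
\begin{equation*}
  C_{n,t_n}^1 + C_{n,t_n}^2
  = -[\vvn'(t_n-)-(\vvna)'(t_n-)]\!\int_{X_{(1)}}^{t_n}\!(\Fn-\FFn)(x)\,dx
  - \vvn'(t_n-)\!\int_{X_{(1)}}^{t_n}\!(\FFn-\FFna)(x)\,dx.
\end{equation*}

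For the first summand, both $\vvn$ and $\vvna$ are linear on $(\tau^0,t_n)$, so $|\vvn'(t_n-)-(\vvna)'(t_n-)| = |(\vvn-\vvna)'(x)|$ at any interior point of that interval, which is $\delta O_p(n^{-1/5})$ by \eqref{eq:rem:vv-prime-op-statements}.  Applying Theorem~2.2 of \cite{DR2009LC} at the knot $\tau$ with the perturbations $\Delta=\pm(\tau-x)_+$ (both valid at a knot of $\vvn$) yields $\int_{X_{(1)}}^\tau(\Fn-\FFn)(x)\,dx = 0$, so only $\int_\tau^{t_n}(\Fn-\FFn)$ remains, bounded by $\Vert\Fn-\FFn\Vert_{J^*}\cdot|t_n-\tau| = O_p(n^{-3/5})\cdot O_p(n^{-1/5}) = O_p(n^{-4/5})$ via Lemma~\ref{lem:rem:FFn-Fn-empirical-proc-arg}.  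The first summand is therefore $\delta O_p(n^{-1}) = o_p(n^{-1})$ in the sense of the preceding subsection.

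For the second summand, decompose the integral as the two anchored pieces $-\int_{X_{(1)}}^\tau(\Fn-\FFn) + \int_{X_{(1)}}^{\tau^0}(\Fn-\FFna)$ plus the residuals $-\int_\tau^{\tau^0}(\Fn-\FFn)+\int_{\tau^0}^{t_n}(\FFn-\FFna)$.  The two anchored pieces vanish: the first by the unconstrained characterization at $\tau$ just used, and the second by Theorem~2.8 of \cite{Doss-Wellner:2016ModeConstrained} applied with $\Delta=\pm(\tau^0-x)_+$ (both valid since $\tau^0<m$, so these perturbations keep the mode at $m$).  Each residual is $\delta O_p(n^{-4/5})$: the former by $\Vert\Fn-\FFn\Vert_{J^*}\cdot|\tau-\tau^0| = O_p(n^{-3/5})\cdot\delta O_p(n^{-1/5})$ and the latter by $\Vert\FFn-\FFna\Vert_{J^*}\cdot|t_n-\tau^0|$ using \eqref{eq:rem:FF-knot-op-statements}.

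The hard part is the remaining factor $\vvn'(t_n-)$: because $\vvo'(m)=0$ we have $|\vvn'(t_n-)| = O_p(bn^{-1/5})$ at $t_n=m-bn^{-1/5}$, and since the construction preceding \eqref{eq:rem:Tni-op-switch-epsilons} forces $b$ to grow (roughly like $1/\delta$) as $\delta\to 0$, the naive product is $b\delta\, O_p(n^{-1})$ rather than $o_p(n^{-1})$.  I would close this gap by writing $\vvn'(t_n-) = (\vvna)'(t_n-) + [\vvn'(t_n-)-(\vvna)'(t_n-)]$: the latter bracket produces only a $\delta^2 O_p(n^{-1})$ contribution (pairing with the same $\delta O_p(n^{-4/5})$ integral), while the leading $(\vvna)'(t_n-)\int_{X_{(1)}}^{t_n}(\FFn-\FFna)$ is handled by a further integration by parts that moves $(\vvna)'$ onto the CDF-difference, producing an integrand dominated by $|(\vvna)'-\vvo'|$, which is $o_p(n^{-1/5})$ on $J^*$ by Corollary~5.4 of \cite{Doss-Wellner:2016ModeConstrained} and cancels against $(\Fn-\FFna)$ off $J^*$ via the constrained characterization.
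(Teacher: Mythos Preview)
Your decomposition after integration by parts is algebraically equivalent to the paper's, but you have chosen the \emph{wrong} pairing: you group $[\vvn'-(\vvna)'](t_n-)$ with $\int(\Fn-\FFn)$, while the paper groups it with $\int(\Fn-\FFna)$.  This matters, because the entire argument hinges on extracting an $o_p$ factor from a product whose individual pieces are only $O_p$.  The paper states explicitly that ``$|\vvn'(t_n-)-(\vvna)'(t_n-)| = O_p(n^{-1/5})$ but unfortunately it is not necessarily $o_p(n^{-1/5})$.''  Your appeal to \eqref{eq:rem:vv-prime-op-statements} for $\delta O_p(n^{-1/5})$ at $t_n-$ does not work: that bound is only on the interval $[\max(\tau,\tau^0)+\delta O_p(n^{-1/5}),\,t_n-\delta O_p(n^{-1/5})]$, and the knots $\tau,\tau^0$ from Lemma~\ref{lem:rem:local-to-global-square-integral} are not in general the \emph{last} knots below $t_n$, so your claim that both estimators are linear on $(\tau^0,t_n)$ is unjustified.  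With your grouping, the companion integral $\int_\tau^{t_n}(\Fn-\FFn)$ is $O_p(n^{-4/5})$ with no $\delta$, so the first summand is stuck at $O_p(n^{-1})$.

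The paper's trick is to pair the derivative difference with $\Fn-\FFna$, so the characterization at $\tau^0$ localizes the integral to $(\tau^0,t_n)$ where (with $\tau\le\tau^0$ and $\tau^0$ the last constrained knot) both estimators \emph{are} linear.  Then one does not bound the derivative separately; instead one uses
\[
|(\vvn-\vvna)'(t_n-)|\,(t_n-\tau^0)=|(\vvn-\vvna)(t_n)-(\vvn-\vvna)(\tau^0)|=o_p(n^{-2/5})
\]
by \eqref{eq:rem:vv-ff-op-statements}, and multiplies by $\sup_{[\tau^0,t_n]}|\Fn-\FFna|=O_p(n^{-3/5})$.  This sidesteps the derivative issue entirely.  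For the second term your ``further integration by parts'' of the constant $(\vvna)'(t_n-)$ is not a meaningful operation; the paper instead invokes Corollary~5.4 of \cite{Doss-Wellner:2016ModeConstrained} with $C=0$ to get $(\vvna)'(t_n-)=O_p(n^{-1/5})$ with the implicit constant independent of $t_n$, and pairs this with $\int_\upsilon^{t_n}(\FFn-\FFna)\,d\lambda=o_p(n^{-4/5})$.
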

\begin{proof}
  Note that $ C_{n,t_n}^1 + C_{n,t_n}^2$ equals
  \begin{align*}
    & \int (\vvn'(t_n-) - (\vvna)'(t_n-)) (x-t_n)_- \, d(\Fn - \FFna) \\
    &\quad + \int (\vvna)'(t_n-) (x-t_n)_- \, d(\Fn - \FFna)
    + \int (\vvna)'(t_n-) ( x-t_n)_- \, d(\FFn - \Fn)
  \end{align*}
  which equals
  \begin{equation}
    \label{eq:rem:Cn1-plus-Cn2-first-identity}
    \int (\vvn'(t_n-) - (\vvna)'(t_n-)) (x-t_n)_- \, d(\Fn - \FFna)
    + \int (\vvna)'(t_n-) (x-t_n)_- \, d(\FFn - \FFna).
  \end{equation}
  Since $(\FFn-\FFna)(X_{(1)}) = 0$,
  the second term in
  \eqref{eq:rem:Cn1-plus-Cn2-first-identity} equals
  \begin{equation}
    \label{eq:rem:Cn1-plus-Cn2-second-term-1}
    -(\vvna)'(t_n-) \int_{-\infty}^{t_n} (\FFn - \FFna) d\lambda
    =   -(\vvna)'(t_n-) \int_{\upsilon}^{t_n} (\FFn - \FFna) d\lambda
  \end{equation}
  for a point $\upsilon \in [\tau, \tau^0]$
  which exists by the proof of Proposition~2.13 of
  \cite{Doss-Wellner:2016ModeConstrained}.
  By
  \eqref{eq:rem:FF-knot-op-statements},
  since $t_n - \upsilon = O_p(n^{-1/5})$,
  \begin{equation}
    \label{eq:rem:int_-ffn-ffna}
    \int_{\upsilon}^{t_n} (\FFn - \FFna) d\lambda = o_p(n^{-4/5}).
  \end{equation}
  Since $t_n \to m$, by
  Corollary~5.4 of
  \cite{Doss-Wellner:2016ModeConstrained},
  $(\vvna)'(t_n-) = O_p(1) n^{-1/5}.$
  As in previous cases, by taking $t_n = \xi_n$ and $C = 0$ in that
  corollary, the $O_p(1)$ does not depend on $t_n$.
  Thus we have shown
  \eqref{eq:rem:Cn1-plus-Cn2-second-term-1}
  is $o_p(n^{-1})$.

  Since $\Fn(-\infty)-\FFna(-\infty)=0$,
  the first term in \eqref{eq:rem:Cn1-plus-Cn2-first-identity} equals
  \begin{equation*}
    - (\vvn - \vvna)'(t_n-) \int_{X_{(1)}}^{t_n} (\Fn - \FFna) d\lambda,
  \end{equation*}
  which equals
  \begin{equation}
    \label{eq:rem:-vvn-vvnat}
    -(\vvn - \vvna)'(t_n-) \int_{\tau^0}^{t_n} (\Fn - \FFna) d\lambda
  \end{equation}
  because $\widehat{H}_{n,L}^0(\tau^0) = \mathbb{Y}_{n,L}(\tau^0)$ by
  Theorem~2.10 of \cite{Doss-Wellner:2016ModeConstrained}.
  The  absolute value of \eqref{eq:rem:-vvn-vvnat}
  is bounded above by
  \begin{equation}
    \label{eq:rem:Cn1-plus-Cn2-first-term-bound}
    |\vvn'(t_n-) - (\vvna)'(t_n-)| (t_n - \tau^0) \sup_{u \in [\tau^0, t_n]} |\Fn(u) - \FFna(u)|.
  \end{equation}
  We know $ |\vvn'(t_n-) - (\vvna)'(t_n-)| = O_p(n^{-1/5})$ but unfortunately it is
  not necessarily $o_p(n^{-1/5})$.  However,
  \begin{equation*}
    |\vvn'(t_n-) - (\vvna)'(t_n-)| (t_n - \tau^0)
    = | (\vvn-\vvna)(t_n) - (\vvn-\vvna)(\tau^0)|
    = o_p(n^{-2/5}),
  \end{equation*}
  so \eqref{eq:rem:Cn1-plus-Cn2-first-term-bound} is $o_p(n^{-1})$, and thus
  so also is \eqref{eq:rem:Cn1-plus-Cn2-first-identity}; that is,
  $C_{n,t_n}^1 + C_{n,t_n}^2 = o_p(n^{-1})$.
\end{proof}

Thus we have shown that $C^1_{n,t_n} + C^2_{n,t_n} = o_p(n^{-1})$ and
$B^1_{n,t_n} + B^2_{n,t_n} = o_p(n^{-1})$, so we can conclude
\begin{equation*}
  A^1_{n,t_n} +
  A^2_{n,t_n}  = o_p(n^{-1}).
\end{equation*}
Together with
\eqref{eq:rem:E1-+-E2-op}, \eqref{eq:rem:Tni-op-switch-epsilons},
and \eqref{eq:rem:2}, we can conclude that
\begin{equation}
  \label{eq:rem:Rn1c-op}
  R_{n,t_n} + R_{n,1,t_n}^c = o_p(n^{-1}).
\end{equation}

\subsection{Proof completion / details:  the main result}
\label{subsubsec:final-arguments}

The preceding one-sided arguments apply symmetrically to the error terms on
the right side of $m$.  Thus, we now return to handling simultaneously the
two-sided error terms.
We have thus shown for any $\delta > 0$ we can find a $b \equiv b_\delta$,
such that,
letting $t_{n,1} = m - b n^{-1/5}$, $t_{n,2} = m + b n^{-1/5}$, we have
\begin{equation}
  \label{eq:rem:16}
  |R_{n,1,t_{n,1}, t_{n,2}} + R_{n,1,t_{n,1}, t_{n,2}}^c | \le  \delta K n^{-1}
\end{equation}
where $K  = O_p(1)$ does not depend on $b$ (i.e., on $\delta$).
Now by
Proposition~\ref{prop:localerrorterms}
\begin{equation*}
  |R_{n,2,t_{n,1},t_{n,2}}| + |R_{n,2,t_{n,1},t_{n,2}}^0|
  + |R_{n,3,t_{n,1},t_{n,2}}| + |R_{n,3,t_{n,1},t_{n,2}}^0| = o_p(n^{-1}).
\end{equation*}
Let
\begin{align*}
  R_{n,t_{n,1},t_{n,2}} & \equiv
  2 n(R_{n,1,t_{n,1},t_{n,2}} + R_{n,1,t_{n,1},t_{n,2}}^c +
  R_{n,2,t_{n,1},t_{n,2}} \\
  & \qquad - R_{n,2,t_{n,1},t_{n,2}}^0 + R_{n,3,t_{n,1},t_{n,2}} -R_{n,3,t_{n,1},t_{n,2}}^0).
\end{align*}
Then by \eqref{BasicDecompositionFirstForm}, write $2 \log \lambda_n =
\mathbb{D}_{n,\twoArgs[t_{n,1},t_{n,2}]} + R_{n,t_{n,1},t_{n,2}}$ (slightly
modifying the form of the subscripts).  Now fixing any subsequence of $\lb n
\rb_{n=1}^\infty$, we can find a subsubsequence such that
$R_{n,t_{n,1},t_{n,2}} \to_d \delta R$ for a tight random variable $R$ by
\eqref{eq:rem:16}.
For $b > 0$ let $ \mathbb{D}_b \equiv \int_{-b/ \gamma_2}^{b/\gamma_2}
(\widehat{\varphi}^2(u) -(\widehat{\varphi}^0)^2(u))du$, as in
\eqref{eq:DDb-formula-gammas}, %
which lets us conclude that
\begin{equation}
  \label{eq:rem:17}
  2 \log \lambda_n = \mathbb{D}_{n, t_{n,1},t_{n,2}} + R_{n,t_{n,1},t_{n,2}}
  \to_d \mathbb{D}_{b} + \delta R
\end{equation}
along the subsubsequence. Taking say $\delta = 1$ shows that there exists a
(tight) limit random variable, which we denote by $\DD$.  Then, since $R$ does
not depend on $\delta$, we can let $\delta \searrow 0$ so $b_\delta \equiv b
\nearrow \infty$, and see that $\lim_{b \to \infty} \DD_b = \DD$, which can
now be seen to be pivotal.  Thus along this subsubsequence, $ 2 \log
\lambda_n \to_d \DD.$ This was true for an arbitrary subsubsequence, and so
the convergence in distribution holds along the original sequence. Thus,
\begin{equation*}
  2 \log \lambda_n \to_d \mathbb{D}
  \quad \mbox{ as } \quad
  n \to \infty.
\end{equation*}

\subsection{Proofs for global consistency}
\label{ssec:ConsistencyProofs}

\begin{proof}[Proof of Theorem~\ref{thm:GlobalConsistencyWithRates} Part B]
  We now indicate the changes to the arguments of \cite{DR2009LC} which are
  needed to prove an analog of Theorem~4.1 of \cite{DR2009LC} for
  $\widehat{\varphi}_n^0$.
  Note that our Theorem~\ref{thm:GlobalConsistencyWithRates} part B is only a partial analogue of Theorem~\ref{thm:GlobalConsistencyWithRates} part A since we only consider the case $\beta =2$ and require $m$ to be unique.  We assume $f_0 \in {\cal P}_{m} := \{ e^{\varphi} : \int e^{\varphi(x)}dx= 1$, $\varphi \in {\cal C}_{m} \}$ where for $m$ fixed ${\cal C}_{m}$ is the class of concave, closed, proper functions with $m$ as a maximum.  We need to study the allowed `caricatures' of the Lemmas A.4 and A.5 of \cite{DR2009LC}, which differ for $\widehat{\varphi}_n^0$ from those for $\widehat{\varphi}_n$.  Let $\rho_n \equiv n^{-1} \log n$. %
Note, we define here a function $\Delta$ to be ``piecewise linear (with $q$ knots)'' to mean that
  \begin{eqnarray}
    \label{eq:4}
    && \ \ \RR \mbox{ may be partitioned into $q+1$ non-degenerate intervals }\\
    && \ \ \mbox{on each of which } \Delta  \mbox{ is linear.} \nonumber
  \end{eqnarray}
  In particular, $\Delta$ may be discontinuous.  Let $\mathcal{D}_k$ be the
  family of piecewise linear functions on $\RR$ with at most $k$ knots.  Let
  \begin{align}
    \label{eq:defn:MI}
    & \widehat{M} := \overline{\{ x \in \RR  : (\widehat{\varphi}_n^0)'(x) = 0 \}}
    & \mbox{ and }  &
    & \widehat{N} := [\tau_L, \tau_R],
  \end{align}
  where $\tau_L$ is the greatest knot of $\widehat{\varphi}_n^0$ strictly smaller than $m$
  and $\tau_R$ is the smallest knot of $\widehat{\varphi}_n^0$ strictly larger than $m$.
  Note that $\widehat{M}$, the (closed) modal interval of $\widehat{\varphi}_n^0$, is contained in
  $\widehat{N}$, and may or may not be strictly contained in $\widehat{N}$.
  Let ${\cal S}_n(\widehat{\varphi}_n^0)$
  denote the set of knots of $\widehat{\varphi}_n^0$.

  \begin{lemma}
    \label{lem:MC-caricatures}
    Let $\widehat{M}$ be as in \eqref{eq:defn:MI}.  Let $\Delta: \RR \to \RR$ be
    piecewise linear in the sense of \eqref{eq:4}, such that
    \begin{equation}
      \label{eq:1}
      \Delta 1_{\widehat{M}}  +
      (-\infty) \times 1_{\widehat{M}^c} \mbox{ is concave with mode at } m,
    \end{equation}
    and assume for each knot $q$ of $\Delta$ that one of the following holds:
    \begin{align}
      &   q \in {\cal S}_n(\widehat{\varphi}_n^0) \setminus \{ m \}
      &\mbox{ and } &
      & \Delta(q) = \liminf_{x \to q} \Delta(x) \label{eq:knot-caricature} \\
      & \Delta(q) = \lim_{r \to q} \Delta(r)
      &\mbox{ and }&
      & \Delta'(q-) \ge \Delta'(q+) \label{eq:concave-caricature} \\
      & q=m, \Delta(q) = \lim_{r \in \widehat{M}, r \to q} \Delta(r)
      & \mbox{ and } &
      & \Delta(q) = \liminf_{x \to q} \Delta(x). \label{eq:mode-caricature}
    \end{align}
    Then
    \begin{equation}
      \label{eq:3}
      \int \Delta d \FF_n \le \int \Delta d \widehat{F}_n^0.
    \end{equation}
  \end{lemma}

  Note that if $m$ is not a knot of $\widehat{\varphi}_n^0$, so is interior to $\widehat{M}$, then
  \eqref{eq:mode-caricature} implies that $\Delta$ must be continuous at $m$
  and \eqref{eq:1} implies that $m$ is a local mode of $\Delta$.
  If $m$ is, e.g.,  a right knot of $\widehat{\varphi}_n^0$ so $(\widehat{\varphi}_n^0)'(m+) < 0$, then
  \eqref{eq:mode-caricature} allows $\Delta$ to be discontinuous at $m$ but
  forces $\Delta(m-) \le \Delta(m+)$.

  \begin{proof}
    We show we can construct a sequence $\Delta_k$
    converging pointwise to $\Delta$, with $|\Delta_k| \le |\Delta|$, and such
    that $\widehat{\varphi}_n^0 + \epsilon \Delta_k$ is concave with mode at $m$ for small
    enough $\epsilon$.  We first show this holds on the interval $\widehat{N}$.

    If $m $ is not a knot of $\widehat{\varphi}_n^0$ then by \eqref{eq:1} $\widehat{\varphi}_n^0 + \epsilon
    \Delta$ is concave with mode at $m$ on $\widehat{N}$.  Now if $m$ is a knot of
    $\widehat{\varphi}_n^0$, either \eqref{eq:concave-caricature} holds or
    \eqref{eq:mode-caricature} holds.  In the former case, again for $\epsilon>
    0$ small enough, $\widehat{\varphi}_n^0 + \epsilon \Delta$ is concave with mode at $m$ on
    $\widehat{N}$.

    Thus assume \eqref{eq:mode-caricature} holds. For concreteness, assume
    $m$ is a left knot of $\widehat{\varphi}_n^0$, so $(\widehat{\varphi}_n^0)'(m-) < 0 = (\widehat{\varphi}_n^0)'(m+)$.
    For $x \in [m - 1/k, m]$ define $\Delta_k(x)$ to be the linear function
    connecting $\Delta(m)$ to $\Delta(m-1/k)$ for $k = 1, \ldots,$ and let
    $\Delta_k(x) =\Delta(x) $ for $x \in \widehat{N} \setminus [m-1/k, m]$.  Then for
    $k$ large,
    \begin{equation}
      \label{eq:2}
      \widehat{\varphi}_n^0 + \epsilon \Delta_k
      \mbox{    is concave with mode } m
    \end{equation}
    on $\widehat{N}$,
    and $\Delta_k(x)$ is monotonically increasing to $\Delta(x)$ (again, for $k
    \ge$ some $K$), by \eqref{eq:mode-caricature}.

    For knots $q$ of $\Delta$ with $q \ne m$, similar arguments can be made;
    one can define $\Delta_k(x)$ such that $|\Delta_k(x)| \le |\Delta(x)|$
    where the knots $q_k$ of $\Delta_k$ are either knots of $\widehat{\varphi}_n^0$ or satisfy
    $\Delta_k'(q_k-) > \Delta_k'(q+)$ so that for $\epsilon >0 $ small
    \eqref{eq:2} holds globally.  Thus, by the dominated convergence theorem,
    and the characterization theorem for $\widehat{F}_n^0$,
    \begin{equation*}
      \int \Delta d \FF_n
      = \lim_{k \to \infty} \int \Delta_k d \FF_n
      \le \lim_k \int \Delta_k d\widehat{F}_n^0
      = \int \Delta d \widehat{F}_{n}^0.
    \end{equation*}
  \end{proof}

  For the next lemma, we define for a function $\Delta : \RR \to \RR$,
  \begin{align*}
    &W(\Delta) = \sup_{x \in \RR} \frac{ | \Delta(x)|}{1 \vee |\varphi_0(x)|}
    & \mbox{ and } &
    & \sigma^2(\Delta) =  \int_{\RR} \Delta^2(x) dF_0(x).
  \end{align*}
  Also, for a point $x \in \RR$, let $\tau^0_+(x) = \min
  S_n(\widehat{\varphi}_n^0) \cap [x, \infty)$ and $\tau^0_-(x) = \max
  S_n(\widehat{\varphi}_n^0) \cap (-\infty,x ]$.

  \begin{lemma}
    \label{lem:UC-caricature-properties}
    Let  $T = [A,B]$ be a compact subinterval strictly contained in $\{ f_0 > 0
    \}$.  Let $\varphi_0 - \widehat{\varphi}_n^0 \ge \epsilon$ or
    $\widehat{\varphi}_n^0 - \varphi_0 \ge \epsilon$ on some interval
    $[c,c+\delta] \subset T$ with length $ \delta > 0$ and suppose $X_{(1)} < c$
    and $X_{(n)} > c+\delta$.  Suppose $[\tau^0_-(c), \tau^0_+(c+\delta)] \cap
    \hat{N} = \emptyset$. Then there exists a piecewise linear function
    $\Delta$ with at most three knots, each of which satisfies one of
    conditions
    \eqref{eq:knot-caricature} or
    \eqref{eq:concave-caricature}
    and a positive constant $K' = K'(f_0, T)$ such that
    \begin{eqnarray}
      && | \varphi_0 - \widehat{\varphi}^0 | \ge \epsilon | \Delta | ,  \label{A6}\\
      && \Delta (\varphi_0 - \widehat{\varphi}^0 ) \ge 0, \label{A7}\\
      && \Delta \le 1, \label{A8}\\
      && \int_c^{c+\delta} \Delta^2 (x) dx \ge \delta /3,  \label{A9}\\
      && W(\Delta) \le K' \delta^{-1/2} \sigma(\Delta) \label{A10}.
    \end{eqnarray}
  \end{lemma}
  \begin{proof}
    The proof is identical to the proof of Lemma A.5 in
    \cite{DR2009LC}; %
    the condition $[\tau^0_-(c), \tau^0_+(c+\delta)] \cap \hat{N} = \emptyset$
    allows us to use identical perturbations for $\widehat{\varphi}_n^0$ that
    one can use for $\widehat{\varphi}_n$.
  \end{proof}
  Next, we need an adaptation of the above lemma for the more difficult case where we have to
  accomodate the modal constraint.  We assume here that the length of
  $\widehat{N}$ is shorter than $\delta$, which will be true with high
  probability when we apply the lemma to the case where $\widehat{N}$ is of
  order $n^{-1/5}$ and $\delta$ of order $(\log n / n)^{1/5}$.
  \begin{lemma}
    \label{lem:MC-caricature-properties}
    Let $T = [A,B]$ be a compact interval strictly contained in $\{ f_0 > 0
    \}$.  Let $\varphi_0 - \widehat{\varphi}_n^0 \ge \epsilon$ or $\widehat{\varphi}_n^0 - \varphi_0 \ge \epsilon$ on
    some interval $[c, c+ \delta] \subset T$ with length $\delta > 0$ and
    suppose that $X_{(1)} < c$ and $ X_{(n)} > c + \delta$.
    Suppose also  that
    $[\tau^0_-(c), \tau^0_+(c+\delta)] \cap \hat N \ne \emptyset$ and
    $|
    \widehat{N} |$, the length of $\widehat{N}$, is no larger than $\delta / 4$, and suppose $T
    \setminus [c, \infty)$ and $T \setminus (-\infty,c+\delta]$ both contain a
    knot of $\widehat{\varphi}_n^0$.  Then there
    exists a piecewise linear (in the sense of \eqref{eq:4}) function $\Delta$
    with at most $4$ %
    knots, satisfying the conditions of Lemma~\ref{lem:MC-caricatures}, and
    there exists a positive $K' \equiv K'(f_0, T)$ such that
    \begin{align}
      & | \varphi_0 - \widehat{\varphi}_n^0| \ge \epsilon |\Delta|, \label{eq:delta-upper-bound} \\
      & \Delta ( \varphi_0 - \widehat{\varphi}_n^0) \ge 0, \label{eq:delta-aligned} \\
      & \Delta \le 1 \label{eq:delta-1-upper-bound} \\
      & \int_c^{c+\delta} \Delta^2(x)dx \ge \delta / 6,
      \label{eq:delta-size-bound} \\
      & W(\Delta) \le K' \delta^{-1/2} \sigma(\Delta). \label{eq:W-sigma-bound}
    \end{align}

  \end{lemma}

  \begin{proof}
    We argue by several different cases.  We focus only on the cases where
    $\widehat{N}$ is near to $[c,c+\delta]$ in the sense that we now assume that either $\widehat{N} \cap
    [c,c+\delta] \ne \emptyset$ or there are no knots of $\widehat{\varphi}_n^0$ between $\widehat{N}$
    and $[c,c+\delta]$.  In any other case, the proof Lemma~A.5 of
    \cite{DR2009LC} applies without modification.

    We begin with the cases where $\widehat{\varphi}_n^0
    - \varphi_0 \ge \epsilon$ on $[c,c+\delta]$.  There are separate subcases
    depending on how $\widehat{N}$ relates to $[c,c+\delta]$ and the (non-)existence of
    other knots in $[c,c+\delta].$ In all cases, we will first verify conditions
    \eqref{eq:delta-upper-bound}--\eqref{eq:delta-size-bound} and put off
    verifying \eqref{eq:W-sigma-bound} until later.

    {\bf Case 1.1} Assume $\widehat{\varphi}_n^0 - \varphi_0 \ge \epsilon$ on $[c,c+\delta]$ and
    $\widehat{N} \subset [c,c+\delta]$.  Let $\Delta \in \mathcal{D}_4$ be continuous
    (and piecewise linear)
    with knots at $c, \tau_L, \tau_R, $ and $c+ \delta$, and let $\Delta $ be
    equal to $-1$ on $\widehat{N}$ and $0$ on $[c,c+\delta]^c$.  Thus $\Delta $
    satisfies conditions \eqref{eq:1} and \eqref{eq:concave-caricature} of
    Lemma~\ref{lem:MC-caricatures}.  Then $|\Delta| \le 1$ on $[c,c+\delta]$
    and is $0$ on $[c,c+\delta]^c$, so \eqref{eq:delta-upper-bound} is
    satisfied, and so is \eqref{eq:delta-aligned}. Since $\Delta$ is always
    nonpositive, \eqref{eq:delta-1-upper-bound} is trivially satisfied.
    We see
    that
    \begin{eqnarray}
      \label{eq:delta-1.1-lowerbound}
      \int_c^{c+\delta} \Delta^2 (x) dx
      &  \ge  & \int_0^{x_0} \left ( \frac{x}{x_0} \right )^2 dx + \int_{x_0}^\delta \left ( \frac{\delta-x}{\delta-x_0} \right )^2 dx \\
      & = &  \frac{x_0}{3} + \frac{\delta - x_0}{3} = \frac{\delta}{3},  \nonumber
    \end{eqnarray}
    so \eqref{eq:delta-size-bound} is
    satisfied.

    The next two cases assume $\widehat{\varphi}_n^0 - \varphi_0 \ge \epsilon$ on $[c,c+\delta]$ and
    $\widehat{N} \not\subset [c, c+ \delta]$. Recall $\widehat{N} = [\tau_L, \tau_R]$.

    {\bf Case 1.2} Assume $\widehat{\varphi}_n^0 - \varphi_0 \ge \epsilon$ on $[c,c+\delta]$ and
    $\widehat{N} \not\subset [c, c+ \delta]$.  Additionally, assume there exists $ \tau
    \in S_n(\widehat{\varphi}_n^0) \cap ( [c,c+\delta] \setminus \widehat{N}) $.  We now again let
    $\Delta \in \mathcal{D}_3 \subset \mathcal{D}_4$ be continuous, now with
    $\Delta(\tau) = -1$. If $\tau_R < c+ \delta$, then set the knots of
    $\Delta$ at $c \vee \tau_R, \tau, $ and $c+ \delta$, and set $\Delta$ to be
    $0$ on $[c \vee \tau_R, c+\delta]^c$.  If $\tau_L > c$, then set the knots
    at $c, \tau, $ and $(c+\delta) \wedge \tau_L$ and set $\Delta$ to be $0$ on
    $[c, (c+\delta) \wedge \tau_L]^c$.  Consider the case where $\tau_R < c+
    \delta$, and the other case is identical.  Since $\widehat{N} \not\subset
    [c,c+\delta]$ and $| \widehat{N} | \le \delta / 4$, $\tau_R - c < \delta /4$.
    Again, $\Delta$ satisfies conditions \eqref{eq:1} and
    \eqref{eq:concave-caricature} of Lemma~\ref{lem:MC-caricatures}.
    Conditions \eqref{eq:delta-upper-bound}--\eqref{eq:delta-1-upper-bound} can
    be immediately verified, as before.  Condition~\eqref{eq:delta-size-bound}
    can be verified as in the previous case, replacing $\delta$ by $3\delta/4$,
    since $\tau_R - c < \delta /4$, and this yields
    \begin{equation}
      \label{eq:5}
      \int_c^{c+\delta}
      \Delta^2(x)dx \ge \delta / 4.
    \end{equation}

    {\bf Case 1.3} Assume $\widehat{\varphi}_n^0 - \varphi_0 \ge \epsilon$ on $[c,c+\delta]$ and
    $\widehat{N} \not\subset [c,c+\delta]$.  Additionally, assume that $S_n(\widehat{\varphi}_n^0) \cap
    ([c,c+\delta] \setminus \widehat{N}) = \emptyset.$ We define $\tilde \Delta$ to be
    an affine function either with $\tilde \Delta(c) = -\epsilon$ and $\tilde
    \Delta$ nonincreasing or $\tilde \Delta(c+\delta) = - \epsilon$ and $\tilde
    \Delta $ nondecreasing. Thus, $\tilde \Delta \le -\epsilon$ on
    $[c,c+\delta]$. We take $\tilde \Delta$ to be tangent to $\varphi_0 - \widehat{\varphi}_n^0$
    (but this is not essential).  Next let $(c_1, d_1) := \{ \tilde \Delta < 0
    \} \cap (c_0,d_0)$ where $[c_0,d_0] \supset ([c,c+\delta] \setminus \widehat{N})$
    is defined to be the maximal interval on which $\varphi_0 - \widehat{\varphi}_n^0$ is concave,
    so $\widehat{\varphi}_n^0$ is linear.  Define $\Delta \in \mathcal{D}_2 \subset
    \mathcal{D}_4$ via
    \begin{align*}
      \Delta (x) :=
      \begin{cases}
        0 & x \notin [c_1,d_1], \\
        \tilde \Delta(x) / \epsilon  & x \in [c_1,d_1].
      \end{cases}
    \end{align*}
    Now, \eqref{eq:1} of Lemma~\ref{lem:MC-caricatures} is seen to be satisfied
    since $\Delta$ is $0$ on $\widehat{N}$, and since by definition $\tau_L \ne m \ne
    \tau_R$, \eqref{eq:knot-caricature} is satisfied at $c_1$ and $d_1$.  Since
    $\tilde \Delta$ is tangent to $\varphi_0 - \widehat{\varphi}_n^0$, condition
    \eqref{eq:delta-upper-bound} is verified, and \eqref{eq:delta-aligned} and
    \eqref{eq:delta-1-upper-bound} are also seen to be verified.  Condition
    \eqref{eq:delta-size-bound} holds easily since in fact $\Delta \le -1$ on
    $[c,c+\delta]$.

    {\bf Case 2} Now assume $\varphi_0 - \widehat{\varphi}_n^0 \ge \epsilon$ on $[c,c+\delta]$.\\
    {\bf Case 2.1} Assume $\widehat{N} \subset (c, c+ \delta)$. Then if $c + \delta/2
    \le m \le c + \delta$ then set $c_0 = \tau_{-1}(c)$, the largest knot of
    $\widehat{\varphi}_n^0$ not larger than $c$, set $x_0 = m$, and set $d_0 = \tau_R$.  If $
    c \le m < c + \delta / 2$, set $c_0 = \tau_L$, set $x_0 = m$, and let
    $d_0 = \tau_1(c+\delta)$, the smallest knot of $\widehat{\varphi}_n^0$ not smaller than $c+
    \delta$.  \\
    {\bf Case 2.2} Assume $\widehat{N} \not\subset (c,c+\delta)$. Then $(c,c+\delta)
    \setminus \widehat{N}$ is an interval, and we set $x_0$ %
    to be the midpoint of this interval; if $m \ge c + \delta / 2$ then set
    $c_0 = \tau_{-1}(c)$ and and set $d_0 = \tau_1(c+\delta) \wedge \tau_L$.
    Similarly, if $m < c + \delta / 2$, set $c_0 = \tau_{-1}(c) \vee \tau_R$
    and set $d_0 = \tau_1(c+\delta)$.  Since $|\widehat{N}| \le \delta / 4$, $c_0 - c
    \le \delta /4 $ and $c+ \delta - d_0 < \delta / 4$ (where only one of the
    previous inequalities is relevant, depending on whether $m < c + \delta /
    4$ or $m > c + 3 \delta/ 4$).

    For both Case 2.1 and 2.2 we then define $\Delta \in \mathcal{D}_3 \subset
    \mathcal{D}_4$ by
    \begin{align}
      \Delta(x) :=
      \begin{cases}
        0, & x \in [c_0,d_0]^c,\\
        1 + \beta_1(x- x_0 ) & x \in [c_0, x_0],\\
        1 + \beta_2(x- x_0 ), & x \in [x_0, d_0],
      \end{cases}
    \end{align}
    where $\beta_1 \ge 0$ is chosen such that if
    \begin{align}
      & (\varphi_0 - \widehat{\varphi}_n^0)(c_0) \ge 0 & \mbox{ then } &   &  \Delta(c_0) = 0, \mbox
      { and if } \label{eq:case-1L} \\
      & (\varphi_0 -\widehat{\varphi}_n^0)(c_0) < 0 & \mbox{ then }& & \mbox{sign}(\Delta) = \mbox{sign}(\varphi_0-\widehat{\varphi}_n^0)
      \mbox{ on } [c_0,x_0], \label{eq:case-2L}
    \end{align}
    where $\mbox{sign}( y )$ is $1$ if $ y \ge 0$ and $-1$ if $y < 0$.  Similarly,
    $\beta_2 \le 0$ is chosen such that if
    \begin{align}
      &  (\varphi_0 - \widehat{\varphi}_n^0)(d_0) \ge 0   & \mbox{ then } & & \Delta(d_0) = 0, \mbox
      { and if } \label{eq:case-1R} \\
      & (\varphi_0 -\widehat{\varphi}_n^0)(d_0) < 0 & \mbox{ then }& & \mbox{sign}(\Delta) = \mbox{sign}(\varphi_0-\widehat{\varphi}_n^0)
      \mbox{ on } [x_0, d_0]. \label{eq:case-2R}
    \end{align}
    That is, $\Delta$ is defined to be $1$ at $x_0$ and, if $\varphi_0 - \widehat{\varphi}_n^0$
    crosses below $0$ on $[c_0, c] \cup [c+\delta, d_0]$ at potential points
    $\tilde c$ or $\tilde d$, then
    $\Delta$ crosses below $0$ at the same point(s).  We note also for future
    reference in Case 2.1 that if $c + \delta /2 \le m$ then $(\varphi_0-\widehat{\varphi}_n^0)(d_0)
    = 0$ since $d_0=\tau_R \le c + \delta$, so we are in case
    \eqref{eq:case-1R} and $\Delta(d_0)=0$.  Thus
    $W(\Delta) = W( \Delta 1_{[c_0,x_0]})$, because we have thus forced
    $W(\Delta 1_{[x_0,d_0]}) = 1$
    (and $1 \le W( \Delta 1_{[c_0,x_0]})$).  Similarly, if $m < c+ \delta/2$ then we are in
    case \eqref{eq:case-1L}, and
    $W(\Delta) = W(\Delta 1_{[x_0,d_0]})$. Now we check that the conditions of
    Lemma~\ref{lem:MC-caricatures}
    hold.\\
    {\bf Case 2.1 (continued)} If $m \in \widehat{N} \subset [c, c+ \delta]$ then
    $\Delta$ is continuous at $m$ (so \eqref{eq:mode-caricature} holds),
    \eqref{eq:1} holds, and at $c_0$ and $d_0$ \eqref{eq:knot-caricature} holds
    (possibly with one discontinuity) since these are both knots.   \\
    {\bf Case 2.2 (continued)} Note, if $\widehat{N} \not\subset [c,c+\delta]$, then
    $\Delta$ is $0$ on $\widehat{N} \supseteq \widehat{M}$: if $\widehat{N} \cap [c,c+\delta] =
    \emptyset$ then this is immediate (since the endpoint of $\widehat{N}$ is the
    nearest knot to $[c,c+\delta]$), and if one of $\tau_L$ or $\tau_R$ lies in
    $(c,c+\delta)$, then $\varphi_0 - \widehat{\varphi}_n^0$ is greater or equal to $\epsilon$ at
    that point, so $\Delta$ will be $0$ at that point.  Now
    \eqref{eq:knot-caricature} holds at $c_0, d_0$, and $\Delta$ is $0$ on $\widehat{N}
    \supseteq \widehat{M}$ so \eqref{eq:mode-caricature} holds.

    Now we check the remaining conditions.  Conditions \eqref{eq:delta-aligned}
    and \eqref{eq:delta-1-upper-bound} hold by construction for both Case 2.1
    and 2.2.  We check Condition~\eqref{eq:delta-size-bound} holds for the two
    cases.  \\
    {\bf Case 2.1 (continued)} Define $\Delta_*(x)$ to be the triangle
    function with $\Delta_*(x_0)=1$ and $\Delta_*(c) = \Delta_*(c+\delta) = 0$.
    We assume without loss of generality that $ m \ge c + \delta / 2$.  Then,
    $\Delta 1_{[c, x_0]} \ge \Delta_* 1_{[c, x_0]}$, so by
    \eqref{eq:delta-1.1-lowerbound},
    $\int \Delta^2(x)dx \ge \int_c^{m} \Delta_*^2(x)dx \ge (m-c)/3 \ge \delta / 6$. \\
    {\bf Case 2.2 (continued)} Define $\Delta_*$ to be the triangle function
    with $\Delta_*(x_0)=1$,
    $\Delta_*(c_0 \vee c) = 0 = \Delta_*(d_0 \wedge (c+\delta))$,
    and $\Delta_*(x)=0$ for $x \notin [c,c+\delta]$.  Then again
    $\Delta 1_{[c_0 \vee c, d_0 \wedge( c+\delta)]} \ge \Delta_*1_{[c_0 \vee c, d_0 \wedge( c+\delta)]}$.
    Since $c_0 - c \le \delta / 4$ and $ c+ \delta - d_0 < \delta / 4$,
    $d_0 \wedge( c+\delta) - ( c_0 \vee c) \ge 3 \delta / 4$.  Thus, as in \eqref{eq:5},
    $\int_c^{c+\delta} \Delta(x)^2 dx \ge \int_c^{c+\delta} \Delta_*(x)^2 dx \ge \delta / 4$.

    Next we check \eqref{eq:delta-upper-bound} for both Case 2.1 and 2.2.  If
    $\varphi_0 - \widehat{\varphi}_n^0 \ge 0 $ on $[c_0,d_0]$ there is nothing to check (since then
    $|\Delta| = \Delta \le 1$).  Assume that there is thus a point $\tilde d$
    with $c + \delta < \tilde d < d_0$ such that $\varphi_0 - \widehat{\varphi}_n^0 \le 0$ on
    $[\tilde d, d_0]$.  (An analogous argument holds for a point $\tilde c <
    c$).  By construction, $(\varphi_0 - \widehat{\varphi}_n^0)(c + \delta ) \ge \epsilon \ge
    \epsilon \Delta(c + \delta) \ge 0$, and $(\varphi_0-\widehat{\varphi}_n^0)(\tilde d) = \epsilon
    \Delta (\tilde d) = 0$; on $[c+\delta, d_0]$, $\varphi_0-\widehat{\varphi}_n^0$ is concave by
    the definition of $d_0$.  Thus, $|(\varphi_0 - \widehat{\varphi}_n^0)'(x+)| \ge \epsilon
    |\beta_2|$ for any $x \in [\tilde d, d_0]$.  Thus $(\varphi_0 - \widehat{\varphi}_n^0)(x) \le
    \epsilon \Delta(x) \le 0$ for $x \in [\tilde d, d_0]$.  Thus we have shown
    \eqref{eq:delta-upper-bound}.

    \medskip

    Lastly, we check \eqref{eq:W-sigma-bound} in all cases.  Note that since
    $T$ is a compact interval strictly contained in $\{ f > 0 \}$, there
    exists a constant $C_0$ such that $f(x) \ge C_0$ for $x \in T$.  Now, in
    Case 1.1, $W(\Delta) \le \Vert \Delta \Vert = 1$ where $\Vert \Delta \Vert
    = \sup_{x \in \RR} |\Delta(x)|$.  And we have $\sigma(\Delta)^2 \ge C_0
    \int_{\RR} \Delta(x)^2 dx \ge C_0 \delta / 3$ by
    \eqref{eq:delta-1.1-lowerbound}.  So let $K' \ge (3 / C_0)^{1/2}$, and then
    \eqref{eq:W-sigma-bound} holds.

    Similarly, in Case 1.2, $W( \Delta) \le 1$ and $\sigma(\Delta)^2 \ge C_0
    \delta / 4$ by \eqref{eq:5}, so let $K' \ge (4/C_0)^{1/2}$ and then
    \eqref{eq:W-sigma-bound} holds.

    To handle the remaining cases, we consider $h(x)$ defined by
    $h(x) = 1_{Q}(x) (\alpha + \gamma x)$ for $\alpha, \gamma \in \RR$ where
    $Q = [x_0,y_0]$ is a nondegenerate interval, $Q \subseteq T$.  We always
    have
    \begin{align*}
      W( h) \le \Vert h \Vert
      & & \mbox{ and } & &
      \sigma(h)^2 \ge C_0 \int_{x_0}^{y_0} h(x)^2 dx.
    \end{align*}
    Now $\int_{\RR} h(x)^2 dx$ is invariant under translations of $h$, sign
    changes of $h$, and replacing $h$ by $h(-x)$.  Thus, we assume that
    $h(y_0) > 0$, by replacing $h$ by $-h$ if necessary.  If $\min_{y \in Q}
    h(y) \ge 0$ then let $[x_0,y_0] = [0,y_0]$, taking $x_0 = 0$ by
    translation.  Otherwise, take $x_0 < 0 < y_0$ and $h(0)=0$, by translation.
    Furthermore, we assume $h(y_0) = \Vert h \Vert$ by replacing $h(x)$ by
    $-h(-x)$ if $h(x_0) < -h(y_0) < 0$ (so $h(y_0) > 0$, still),
    or by $h(-x)$ if $h(x_0) > h(y_0) > 0$.   Note that we have forced $h$ to
    be nondecreasing so $\gamma \ge 0$.

    Now if we are in the case $\inf_{y \in Q} h(y) = h(0) = \alpha > 0$ with
    $x_0 = 0$, then
    \begin{eqnarray*}
      \int_0^{y_0} (\alpha + \gamma x)^2 dx
      & = &\frac{1}{ 3 \gamma} \left ( (\alpha + \gamma y_0)^3 - \alpha^3 \right ) \\
      & = & \frac{1}{3} y_0 \left ( (\alpha + \gamma y_0)^2 + \alpha ( \alpha + \gamma y_0) + \alpha^2 \right )
      = \frac{1}{3} (y_0-x_0) \Vert h \Vert^2,
    \end{eqnarray*}
    since $\Vert h \Vert = \alpha + \gamma y_0$ in this case.
    If we are in the case $h(x_0) < 0 < h(y_0)$ with $x_0 < 0 < y_0$, then
    \begin{align*}
      \int_{x_0}^{y_0} (\gamma x)^2 dx
      = \frac{\gamma^2}{3} y_0^3 - \frac{\gamma^2}{3} x_0^3
      = \frac{1}{3}(y_0-x_0) \gamma^2 ( y_0^2 - y_0x_0 + x_0^2)
      \ge \frac{1}{3}  (y_0-x_0) \Vert h \Vert^2
    \end{align*}
    since $\Vert h \Vert^2 = \gamma^2 y_0^2$ in this case. Thus, by
    \eqref{eq:6},
    \begin{equation}
      \label{eq:8}
      \left ( \frac{3}{C_0(y_0-x_0)} \right)^{1/2}     \sigma(h)  \ge  \Vert h \Vert.
    \end{equation}

    Now we apply these computations to the remaining cases.  In Case 1.3,
    $\Delta$ is of the form of $h$ defined above and the corresponding
    $x_0,y_0$ satisfy $y_0-x_0 \ge 3 \delta / 4$ since $[c_1,d_1 ] \supseteq
    [c,c+\delta] \setminus \widehat{N}$ and $| \widehat{N}| \le \delta /4 $.  Note that we can
    take $Q \subset T$ by the assumption that there are knots of $\widehat{\varphi}_n^0$ above
    and below $[c,c+\delta]$, and these bound the support of $\Delta$.  Thus
    this case is complete since $\sigma(\Delta)^2 \ge (C_0/3) (y_0-x_0) W(
    \Delta)^2$.

    For Case 2, $\Delta$ equals $h_1 + h_2$ where $h_1,h_2$ are of the type
    considered above and have disjoint support, where both supports are
    contained in $T$ again by the assumption that $\widehat{\varphi}_n^0$ has knots above and
    below $[c,c+\delta]$.  \\
    {\bf Case 2.1 (continued)} Assume without loss of generality that
    $m \ge c + \delta / 2$.  Then, as noted after display \eqref{eq:case-2R},
    $W(\Delta) = W(\Delta 1_{[c_0,x_0]}) \equiv W(h_1)$.  For $h_1$, the
    corresponding $x_0,y_0$ satisfy $y_0-x_0 \ge \delta / 2$. Thus,
    \begin{align*}
      W(\Delta) = W(h_1) \le \frac{6^{1/2}}{C_0^{1/2} \delta^{1/2}} \sigma(h_1)
      \le \frac{6^{1/2}}{C_0^{1/2} \delta^{1/2}}  \sigma(\Delta).
    \end{align*}
    {\bf Case 2.2 (continued)} In this case, for both $h_1, h_2$, the
    corresponding $x_0,y_0$ satisfy $y_0-x_0 \ge 3 \delta / 8$ (again using
    $|\widehat{N}| \le \delta /4 $).  Thus,
    \begin{align*}
      W(\Delta) = \max(W(h_1),W(h_2))
      \le \frac{ 8^{1/2}}{C_0^{1/2}
        \delta^{1/2}}  \max(\sigma(h_1),\sigma(h_2))
      \le \frac{ 8^{1/2}}{C_0^{1/2}
        \delta^{1/2}}  \sigma(\Delta).
    \end{align*}
    This completes the proof.
  \end{proof}

  Now we complete the proof of Theorem 4.7.B.  We treat the case $m \in K$.  We
  can always enlarge $K$ so this holds.  Now, since
  $\vp_0''(m)<0$,  there is an interval $K^0$ containing $m$ such that
  $\widehat{\varphi}_n^0$ has knots above and below $K^0$ with high
  probability for large $n$ since $\widehat{\varphi}_n^0$ is uniformly
  consistent
  by Proposition~7.2 of \cite{Doss-Wellner:2016ModeConstrained}.  %
  Thus,
  $K^0$ satisfies the condition needed for
  Lemma~\ref{lem:MC-caricature-properties}.  Now suppose that
  \begin{equation*}
    \sup_{t \in K} (\widehat{\varphi}_n^0  - \varphi_0)(t) \ge C \epsilon_n
    \quad \mbox{ or } \quad
    \sup_{t \in [A + \delta_n, B - \delta_n]} (\varphi_0 -
    \widehat{\varphi}_n^0(t) \ge C \epsilon_n
  \end{equation*}
  for some $C > 0$ where $\epsilon_n = \rho_n^{2/5}$ and $\delta_n =
  \rho_n^{1/5} = \epsilon_n^{1/2}$.  By Lemma~A.3 of \cite{DR2009LC} (stated
  below as Lemma~\ref{lem:A.3-DR2009} for convenience) with $\epsilon = C
  \epsilon_n$, if $C \ge K(2,L)^{-2}$ and $n$ is large it follows that there is
  a random interval $[c_n, c_n + \delta_n]$ either contained in $K^0$ or
  contained in $K \setminus K^0$ on which either $\widehat{\varphi}_n^0 -
  \varphi_0 \ge C \epsilon_n / 4$ or $\varphi_0 - \widehat{\varphi}_n^0 \ge C
  \epsilon_n / 4$.  In the case $[c_n,c_n + \delta_n] \subset K^0$, then since
  $\tau_R - \tau_L = O_p(n^{-1/5})$ by
  Proposition~7.3 of
  \cite{Doss-Wellner:2016ModeConstrained}.  %
  (since
  we assumed $\varphi_0''(m) < 0$) so for $n$ large
  \begin{equation*}
    |\widehat{N}| = \tau_R-\tau_L \le \delta_n /4  =  (\log n / n)^{1/5}
    /4,
  \end{equation*}
  so we can find a random function $\Delta_n$ with no more than four knots
  which satisfies the conditions of Lemma~\ref{lem:MC-caricature-properties}.
  If $[c_n,c_n +\delta_n] \subset K \setminus K^0$ then we can find a random
  function $\Delta_n$ with no more than three knots satisfying the conditions
  of Lemma~\ref{lem:UC-caricature-properties}.
  Now, calculating as in the proof of Theorem~4.1 of
  \cite{DR2009LC}, %
  we can see that for a constant $G_0$,
  \begin{equation*}
    C^2 \le \frac{ 16 G_0^2 (1 + o(1)) \epsilon_n^{-2} \rho_n}{
      \sigma^2(\Delta_n)}
    = \frac{ 16 G_0^2 (1 + o(1))}{ \delta_n^{-1} \sigma^2(\Delta_n)}
    \le \frac{48 G_0^2 (1 + o(1))}{\inf_{t \in K} f_0(t)}.
  \end{equation*}
  But if we choose $C$ strictly larger than the constant on the right side we
  find that the set is empty, and hence has probability $0$ on an event with
  probability increasing to $1$.

\end{proof}

\begin{lemma}[Lemma~A.3,   \cite{DR2009LC}] %
  \label{lem:A.3-DR2009}
  For any $\beta \in [1,2]$ and $L > 0$ there exists a constant $K =
  K(\beta,L) \in (0,1]$ with the following property: suppose that $g$ and
  $\hat g$ are concave and real-valued functions on $T = [A,B]$ where
  $g \in {\cal H}^{\beta,L}(T).$  Let $\epsilon > 0 $ and
  $0 < \delta < K \min(B-A, \epsilon^{1/\beta})$.  Then
  \begin{equation*}
    \sup_{t \in T} (\hat g-g)(t) \ge \epsilon
    \quad \mbox{ or } \quad
    \sup_{ t \in [A + \delta, B - \delta]} (g - \hat g)(t) \ge \epsilon
  \end{equation*}
  implies that %
  for some $c \in [A, B - \delta]$ %
  \begin{equation*}
    \inf_{t \in [c,c+\delta]} (\hat g - g)(t) \ge \epsilon / 4
    \quad \mbox{ or } \quad
    \inf_{t \in [c,c+\delta]} (g - \hat g)(t) \ge \epsilon / 4.
  \end{equation*}
\end{lemma}

\section{Local asymptotic distribution theory near the mode}
\label{sec:AsymptoticDistTheorNearMode}

\subsection{Limit processes and scaling relations}
\label{ssec:PrepLimitProcessesScaling}

From
Theorems~5.1 and 5.2 of
\cite{Doss-Wellner:2016ModeConstrained}, %
we know that the processes $H$ and $H^{(2)} = \widehat{\varphi}$
and $H^0$ and $(H^0)^{(2)} = \widehat{\varphi}^0$ exist and are unique in the limiting
Gaussian white noise problem described by (\ref{WhiteNoiseCanonicalConcave}).
We now
introduce further notation and basic scaling results that are needed in the proof of Theorem~\ref{LRasympNullDistribution}.
As in
\cite{MR1891741} %
Appendix A, Proposition A.1,   and
Theorem 4.6 of
\cite{BRW2007LCasymp}
(noting the corrections indicated in Subsection~\ref{ssec:BRW2009Corrections} below), let
$\sigma \equiv 1/\sqrt{f_0 (m)}$,  $a = | \varphi_0^{(2)} (m)|/4!$,
and let
\begin{eqnarray*}
  && Y_{a,\sigma}(t) \equiv \sigma \int_0^t W(s) ds - a t^4 \stackrel{d}{=}  \sigma (\sigma/a)^{3/5} Y( (a/\sigma)^{2/5} t) , \\
  && Y_{a,\sigma}^{(1)} (t) = \sigma W(t) - 4a t^3 \stackrel{d}{=}  \sigma (\sigma/a)^{1/5} Y^{(1)} ( (a/\sigma)^{2/5} t),
\end{eqnarray*}
where $Y \equiv Y_{1,1}$.  These processes arise as the limits of
appropriate (integrated) localized empirical processes.
Similar relations are satisfied by the unconstrained and constrained invelope processes $H_{a,\sigma}$, $H_{a,\sigma}^0$,
and their
derivatives:  with $H\equiv H_{1,1}$ and $H^0 \equiv H_{1,1}^0$, where  $H^0$ can be either $H_L$ or $H_R$,
\begin{eqnarray*}
  && H_{a,\sigma}(t)   \stackrel{d}{=}  \sigma (\sigma/a)^{3/5} H( (a/\sigma)^{2/5} t) , \\
  && H_{a,\sigma}^0 (t)\stackrel{d}{=}  \sigma (\sigma/a)^{3/5} H^0 ( (a/\sigma)^{2/5} t), \\
  && H_{a,\sigma}^{(1)} (t)  \stackrel{d}{=}  \sigma (\sigma/a)^{1/5} H^{(1)} ( (a/\sigma)^{2/5} t),\\
  && (H_{a,\sigma}^0 )^{(1)} (t)  \stackrel{d}{=}  \sigma (\sigma/a)^{1/5} (H^0)^{(1)} ( (a/\sigma)^{2/5} t),\\
\end{eqnarray*}
and
\begin{eqnarray}
  \widehat{\varphi}_{a,\sigma}
  & = & H_{a,\sigma}^{(2)} \stackrel{d}{=} \sigma^{4/5} a^{1/5} H^{(2)} ((a/\sigma)^{2/5} \cdot ) \nonumber \\
  &  = & \frac{1}{\gamma_1 \gamma_2^2} H^{(2)} ( \cdot / \gamma_2)
  \equiv \frac{1}{\gamma_1 \gamma_2^2} \widehat{\varphi} (\cdot / \gamma_2),  %
  \label{ScalingRelationUnConstrained}
\end{eqnarray}
and, similarly,
\begin{eqnarray}
  \widehat{\varphi}_{a,\sigma}^0
  & = & (H_{a , \sigma}^0)^{(2)} \stackrel{d}{=} \sigma^{4/5} a^{1/5} (H^0)^{(2)} ((a/\sigma)^{2/5} \cdot ) \nonumber \\
  & = & \frac{1}{\gamma_1 \gamma_2^2} (H^0)^{(2)} ( \cdot / \gamma_2)
  \equiv \frac{1}{\gamma_1 \gamma_2^2} \widehat{\varphi}^0 (\cdot / \gamma_2),
  \label{ScalingRelationConstrained}
\end{eqnarray}
Here
\begin{eqnarray}
  && \gamma_1 = \left ( \frac{f_0(m)^4 | \varphi_0^{(2)} (m) |^3}{(4!)^3}  \right )^{1/5} = \frac{1}{\sigma} \left ( \frac{a}{\sigma} \right )^{3/5} , \\
  && \gamma_2 = \left ( \frac{(4!)^2}{f_0 (m) | \varphi_0^{(2)} (m) |^2 } \right )^{1/5}  = \left ( \frac{\sigma}{a} \right )^{2/5},
  \label{GammaDefnsLogConcaveAtMode}
\end{eqnarray}
and we note that
\begin{eqnarray}
  && \gamma_1 \gamma_2^{3/2} = \sigma^{-1} = \sqrt{f_0 (m)},  \ \ \ \gamma_1 \gamma_2^4 = a^{-1} = \frac{4!}{| \varphi_0^{(2)} (m)|} ,
  \label{GammaRelationsPart1} \\
  && \gamma_1 \gamma_2^2 = \frac{1}{C(m, \varphi_0)} \equiv \left ( \frac{4! f_0 (m)^2}{| \varphi_0^{(2)} (m)|} \right )^{1/5} .
  \label{GammaRelationsPart2}
\end{eqnarray}

\subsection{Corrections for \cite{BRW2007LCasymp}}  %
\label{ssec:BRW2009Corrections}
In (4.25) of
\cite{BRW2007LCasymp}, %
replace
$$
Y_{k,a,\sigma} (t) := a \int_0^t W(s) ds - \sigma t^{k+2}
$$
by
$$
Y_{k,a,\sigma} (t) := \sigma \int_0^t W(s) ds - a t^{k+2}
$$
to accord with \cite{MR1891741}, page 1649, line -4, when $k=2$.
In (4.22) of
\cite{BRW2007LCasymp}, %
page 1321, replace
the definition of $\gamma_1$ by
\begin{eqnarray*}
  \gamma_1 = \left ( \frac{f_0 (x_0)^{k+2} | \varphi_0 (x_0)|^3}{(4!)^3} \right )^{1/(2k+1)} .
\end{eqnarray*}
In (4.23) of
\cite{BRW2007LCasymp}, %
page 1321, replace the definition of $\gamma_2$ by
\begin{eqnarray*}
  \gamma_2 = \left ( \frac{((k+2)!)^2}{f_0 (x_0) | \varphi_0^{(k)} (x_0) |^2 } \right )^{1/(2k+1)} .
\end{eqnarray*}
When $k=2$ and $x_0 = m$, these definitions of $\gamma_1, \gamma_2$ reduce to $\gamma_1 $ and $\gamma_2$
as given in (\ref{GammaDefnsLogConcaveAtMode}).
One line above (4.24) of
\cite{BRW2007LCasymp}, %
page 1321, change $Y_{a,k,\sigma} $ to $Y_{k,a,\sigma}$.

\section{Lemmas}

Below are some useful lemmas.
\begin{lemma}
  \label{lem:f-to-varphi-taylor}
  Let $f_{in} = e^{\varphi_{in}}$ for $i=1,2$, and let $x$ be such that
  $|f_{1n}(x) - f_{2n}(x)| \to 0 $ as $n \to \infty$.  Then
  \begin{align}
    f_{1n}(x)-f_{2n}(x)
    & = \lp \varphi_{1n}(x) - \varphi_{2n}(x) \rp
    e^{\tilde \epsilon_{n}(x)} e^{\varphi_{2n}(x)} \label{eq:f-to-varphi-taylor-1} \\
    & = \lp \varphi_{1n}(x) - \varphi_{2n}(x)  + \frac{\lp \varphi_{1n}(x) -
      \varphi_{2n}(x)  \rp^2 }{2} e^{\epsilon_n(x)} \rp  e^{\varphi_{2n}(x)}
    \label{eq:f-to-varphi-taylor-2}
  \end{align}
  where $\tilde \epsilon_n(x)$ and $\epsilon_n(x)$ both lie between $0$ and
  $\varphi_{1n}(x) - \varphi_{2n}(x)$, and thus converge to $0$ as $n \to
  \infty$.
\end{lemma}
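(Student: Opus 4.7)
The plan is to reduce both identities to standard Taylor expansions of the exponential function about $0$, applied to the scalar $y_n(x) := \varphi_{1n}(x) - \varphi_{2n}(x)$.

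First I would factor
\begin{equation*}
  f_{1n}(x) - f_{2n}(x) = e^{\varphi_{2n}(x)}\bigl(e^{y_n(x)} - 1\bigr),
\end{equation*}
so that both assertions reduce to expansions of $e^y - 1$ around $y = 0$. For \eqref{eq:f-to-varphi-taylor-1}, I would apply the mean value theorem to $g(y) = e^y$ on the interval with endpoints $0$ and $y_n(x)$: there exists $\tilde\epsilon_n(x)$ between $0$ and $y_n(x)$ with $e^{y_n(x)} - 1 = y_n(x)\, e^{\tilde\epsilon_n(x)}$. Multiplying by $e^{\varphi_{2n}(x)}$ yields the first identity. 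For \eqref{eq:f-to-varphi-taylor-2}, I would use Taylor's theorem with Lagrange remainder of order two for $g(y) = e^y$ at $0$: since $g(0) = g'(0) = 1$ and $g''(y) = e^y$, there exists $\epsilon_n(x)$ between $0$ and $y_n(x)$ such that
\begin{equation*}
  e^{y_n(x)} = 1 + y_n(x) + \tfrac{1}{2} y_n(x)^2 e^{\epsilon_n(x)}.
\end{equation*}
Subtracting $1$ and multiplying by $e^{\varphi_{2n}(x)}$ gives the second identity.

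The only remaining point is the claim that $\tilde\epsilon_n(x)$ and $\epsilon_n(x)$ tend to $0$. Since both lie between $0$ and $y_n(x)$, it suffices to show $y_n(x) \to 0$. From the factorization above,
\begin{equation*}
  |e^{y_n(x)} - 1| = e^{-\varphi_{2n}(x)} |f_{1n}(x) - f_{2n}(x)|,
\end{equation*}
so under the hypothesis $|f_{1n}(x) - f_{2n}(x)| \to 0$, together with the (implicit) boundedness of $\varphi_{2n}(x)$ from below at the point $x$ in all intended applications, we obtain $e^{y_n(x)} \to 1$ and hence $y_n(x) \to 0$. There is no real obstacle here; the lemma is essentially a bookkeeping device that packages two standard one-variable Taylor expansions into a form convenient for the remainder-term analysis in Section~\ref{ssec:GlobalRTs}, and the entire proof is a two-line application of the mean value theorem and of Taylor's theorem with Lagrange remainder.
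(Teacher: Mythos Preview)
Your proof is correct and follows essentially the same approach as the paper: factor out $e^{\varphi_{2n}(x)}$ and apply the mean value theorem (for the first identity) and Taylor's theorem with Lagrange remainder (for the second) to $y \mapsto e^y$ at $0$. You are in fact slightly more careful than the paper in addressing why $\tilde\epsilon_n(x),\epsilon_n(x)\to 0$, correctly noting that this uses an implicit lower bound on $\varphi_{2n}(x)$.
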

\begin{proof}
  Taylor expansion shows
  \begin{align*}
    f_{1n}(x)-f_{2n}(x) = e^{\varphi_{1n}(x)} - e^{\varphi_{2n}(x)}
    & = \lp e^{\varphi_{1n}(x) -\varphi_{2n}(x)} - 1 \rp e^{\varphi_{2n}(x)}\\
    & = \lp \varphi_{1n}(x) - \varphi_{2n}(x) \rp e^{\epsilon_n(x)} e^{\varphi_{2n}(x)},
  \end{align*}
  yielding \eqref{eq:f-to-varphi-taylor-1}.  The second
  expression, \eqref{eq:f-to-varphi-taylor-2}, follows from a similar
  (two-term) expansion.
\end{proof}

\begin{lemma}
  \label{lem:rem:local-to-global-square-integral}
  Assume $\vvo$ is twice continuously differentiable in a neighborhood of $m$
  and $\vvo''(m) < 0$.
  Let $I$ be a random interval whose endpoints are in an $O_p(n^{-1/5})$
  neighborhood of $m$.  Let $D$
  be such that for any $\xi_n \to m$, $|(\vvna)'(\xi_n)- \vvo'(\xi_n)| \le D n^{-1/5}$ with
  probability $1- \epsilon$ for large $n$ by
  Corollary~5.4 of   \cite{Doss-Wellner:2016ModeConstrained}. %
  for $\epsilon > 0$.
  Assume $n^{1/5} \lambda(I) \ge 8 D / \vvo^{(2)}(m)$.
  Let $L>0$,  $\epsilon > 0$, and $\check \delta > 0$.  Suppose
  there exists $\check K>0$ such that
  \begin{equation}
    \label{eq:knot-lemma:integral-ineq}
    \int_{I} (\vvna - \vvn)^2 d\lambda \le \frac{\check \delta}{L} \check K n^{-1}
  \end{equation}
  with probability $1- \epsilon$ for $n$ large.  Then for any interval $J
  \subset I$ where $\lambda(J) = Ln^{-1/5}$, we have with probability $1 -
  \epsilon$ for $n$ large
  \begin{enumerate}[label=(\Alph*)] %
  \item \label{item:lem:integral-to-sup:vv}
    $\Vert \vvn - \vvna \Vert_{J} \le  \delta O_p(n^{-2/5})$,
  \item \label{item:lem:integral-to-sup:ff}
    $\Vert \ffn - \ffna \Vert_J \le \delta O_p(n^{-2/5})$,
  \item \label{item:lem:integral-to-sup:FF}
    $\Vert \FFn - \FFna \Vert_J \le \delta O_p(n^{-3/5})$, and
  \item \label{item:lem:integral-to-sup:knots} there exist knots $\eta \in
    S(\vvn) \cap J$ and $\eta^0 \in S(\vvna) \cap J$ %
    such that $| \eta - \eta^0 | \le \delta O_p(n^{-1/5})$,
  \item   \label{item:lem:integral-to-sup:vv-prime}
    and, letting $K  = [\max(\eta, \eta^0) + \delta O_p(n^{-1/5}), \sup J -
    \delta O_p(n^{-1/5})]$, we have $\Vert \vvn' - (\vvna)'
    \Vert_K \le \delta O_p(n^{-1/5})$,
  \end{enumerate}
  where $ \delta \to 0$ as $\check \delta \to 0$.
\end{lemma}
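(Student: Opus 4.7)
The overall strategy is to upgrade the $L^2$ hypothesis \eqref{eq:knot-lemma:integral-ineq} to the pointwise bound (A) via Lipschitz regularity of $\vvn - \vvna$, then to deduce (B) immediately from (A), handle the main obstacle (D), and finally derive (C) and (E) from (A) together with (D). The constant $\delta$ in the lemma's statement will be a fixed power of $\check\delta$ (the power $1/6$ suffices), which tends to $0$ as $\check\delta \to 0$.

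For (A), write $g := \vvn - \vvna$. By Corollary~5.4 of \cite{Doss-Wellner:2016ModeConstrained} and Lemma~4.5 of \cite{BRW2007LCasymp}, both $(\vvn)'$ and $(\vvna)'$ are bounded by $D' n^{-1/5}$ uniformly on $I$ on an event of probability $\ge 1-\epsilon/4$, so $g$ is Lipschitz on $I$ with constant $2 D' n^{-1/5}$. If $M := \Vert g \Vert_J$ is attained at $x_0 \in J$, then $|g| \ge M/2$ on a neighborhood of $x_0$ of radius $c_0 M n^{1/5}$ with $c_0 := 1/(4D')$, and the assumption $n^{1/5}\lambda(I) \ge 8D/|\vvo^{(2)}(m)|$ guarantees (for $n$ large and $\check\delta$ small, so $M = O(n^{-2/5})$) that a constant fraction of this neighborhood lies in $I$. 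Combining with \eqref{eq:knot-lemma:integral-ineq} gives $c_1 M^3 n^{1/5} \le \check\delta \check K/(Ln)$, yielding (A) with $\delta \asymp \check\delta^{1/3}$. Part (B) is then immediate from $\ffn - \ffna = e^{\vvna}(e^g - 1)$, the inequality $|e^g - 1| \le 2|g|$ on the high-probability event $\Vert g \Vert_J \le 1$, and $\Vert e^{\vvna} \Vert_J = O_p(1)$ by uniform convergence of $\vvna$ to $\vvo$.

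The main obstacle is (D). Proposition~5.1 of \cite{Doss-Wellner:2016ModeConstrained}, for $L$ chosen large enough, ensures that both $S(\vvn) \cap J$ and $S(\vvna) \cap J$ are non-empty with probability $\ge 1 - \epsilon/4$. For the spacing, I argue by contradiction: suppose every pair $(\eta, \eta^0) \in (S(\vvn) \cap J) \times (S(\vvna) \cap J)$ satisfies $|\eta - \eta^0| > \delta' n^{-1/5}$. Then $\vvna$ is linear on $[\eta - \delta' n^{-1/5}, \eta + \delta' n^{-1/5}]$ around each $\vvn$-knot $\eta$, and the slope jump $\Delta_\eta > 0$ of $\vvn$ at $\eta$ produces a convex kink of $h := \vvna - \vvn$ satisfying
\begin{align*}
h(\eta + \delta' n^{-1/5}) + h(\eta - \delta' n^{-1/5}) - 2 h(\eta) = \Delta_\eta \, \delta' n^{-1/5};
\end{align*}
combined with (A), this forces $\Delta_\eta \le 8 \delta n^{-1/5}/\delta'$, and symmetrically $\Delta^0_{\eta^0} \le 8 \delta n^{-1/5}/\delta'$ at each $\vvna$-knot. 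On the other hand $\sum_{\eta \in S(\vvn) \cap J} \Delta_\eta = (\vvn)'(\inf J) - (\vvn)'(\sup J) \gtrsim |\vvo^{(2)}(m)| L n^{-1/5}$ by Corollary~5.4 of \cite{Doss-Wellner:2016ModeConstrained} and the continuity of $\vvo^{(2)}$, so the number $k$ of $\vvn$-knots in $J$ must satisfy $k \gtrsim L \delta'/\delta$. The geometric exclusion rule, that each $\vvn$-knot forbids $\vvna$-knots within $\delta' n^{-1/5}$, then gives $2k \delta' n^{-1/5} \le L n^{-1/5}$ (to leave any room in $J$ for $\vvna$-knots), i.e.\ $k \le L/(2\delta')$. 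Combining yields $\delta'^2 \lesssim \delta$, so taking $\delta' = C\sqrt{\delta}$ with $C$ a sufficiently large universal constant produces a contradiction. Redefining $\delta := C\sqrt{\delta}$ (still tending to $0$ as $\check\delta \to 0$) delivers the spacing claim.

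With (D) in hand, (C) follows from the proof of Proposition~2.13 of \cite{Doss-Wellner:2016ModeConstrained}, which produces a crossing $\upsilon$ of $\FFn$ and $\FFna$ between the close knots of (D), hence $\upsilon \in J$; then for any $t \in J$, $|\FFn(t) - \FFna(t)| = |\int_\upsilon^t (\ffn - \ffna)\,d\lambda| \le \lambda(J) \Vert \ffn - \ffna \Vert_J \le \delta O_p(n^{-3/5})$. For (E), apply (D) iteratively to sub-intervals of $K$ of length comparable to $n^{-1/5}$ to ensure that on $K$ (which excludes $\delta O_p(n^{-1/5})$-margins around the close knots from (D) and around $\sup J$) every linear piece of $g$ has length $\gtrsim n^{-1/5}$ with endpoint values $\le \delta O_p(n^{-2/5})$; the slope bound $\Vert g' \Vert_K \le 2\delta n^{-2/5}/n^{-1/5} = 2\delta n^{-1/5}$ then follows from the difference quotient on each linear piece. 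The hardest step is the contradiction argument in (D), which requires balancing the per-knot slope bound (from (A)) against the total-curvature lower bound forced by strict concavity of $\vvo$ at $m$.
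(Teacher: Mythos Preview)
Your treatment of (A) and (B) is essentially the paper's argument (the paper packages your Lipschitz computation as a separate Lemma~A.3), and your route to (C) via a crossing point of $\FFn$ and $\FFna$ is a clean alternative to the paper's empirical-process argument through $\Fn$.

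The genuine gap is in (D). Your step ``to leave any room in $J$ for $\vvna$-knots we need $2k\delta' n^{-1/5} \le L n^{-1/5}$'' is an invalid inference. The existence of a $\vvna$-knot in $J$ only tells you that the union $\bigcup_{\eta}(\eta-\delta' n^{-1/5},\eta+\delta' n^{-1/5})$ fails to cover $J$; it does \emph{not} imply that the sum of lengths $2k\delta' n^{-1/5}$ is bounded by $\lambda(J)$, because the exclusion intervals can overlap arbitrarily. Concretely, nothing you have written rules out many $\vvn$-knots clustered together (each with tiny jump $\Delta_\eta$, consistent with your per-knot bound), leaving most of $J$ free for $\vvna$-knots. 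Your lower bound $k \gtrsim L\delta'/\delta$ and the non-existent upper bound $k \le L/(2\delta')$ therefore cannot be combined. The paper avoids counting knots altogether: it pigeonholes to find a subinterval $J^*\subset I$ of length $2\delta n^{-1/5}$ on which $(\vvna)'$ drops by at least a fixed multiple of $\delta n^{-1/5}$; under the contradiction hypothesis, $\vvn$ is linear on a $\delta n^{-1/5}$-neighborhood of the extreme $\vvna$-knots in $J^*$, which forces $|\vvna-\vvn|$ to grow linearly there and yields a lower bound $\gtrsim \delta^3 n^{-1}$ on $\int_I(\vvna-\vvn)^2\,d\lambda$, contradicting \eqref{eq:knot-lemma:integral-ineq} once $\check\delta$ is small.

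Your sketch for (E) also does not go through as written: the claim ``every linear piece of $g$ has length $\gtrsim n^{-1/5}$'' is not established and is generally false, since knots of either estimator may cluster. The paper's argument for (E) instead works on each maximal interval where $\vvn$ is linear and uses only that $(\vvna)'$ is nonincreasing: if $|(\vvna-\vvn)'(x)|>\sqrt{\delta}\,n^{-1/5}$ at an interior point $x$, monotonicity of $(\vvna-\vvn)'$ (with $\vvn'$ constant) forces $|\vvna-\vvn|$ to change by more than $\delta n^{-2/5}$ over a one-sided interval of length $\sqrt{\delta}\,n^{-1/5}$, contradicting (A). This yields the bound on $K$ after trimming $\sqrt{\delta}\,n^{-1/5}$ margins.
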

\begin{proof}
  First we prove the first three statements.
  By Taylor expansion, $\vvo'(x) = \vvo^{(2)}(\xi) (x - m)$ where $\xi$ is
  between $x$ and $m$. Let $J = [j_1, j_2] \subseteq I$.  Then $\vvo'(j_2) -
  \vvo'(j_1) \ge |\vvo^{(2)}(m)| L n^{-1/5} 2$ for $n$ large enough since
  $\vvo^{(2)}$ is continuous near $m$.  Note with probability $1 - \epsilon$,
  $|(\vvna)'(j_1) - \vvo'(j_1)|$ and $|(\vvna)'(j_2) - \vvo'(j_2) | $ are
  less than $n^{-1/5} D$ by applying
  Corollary~5.4 of   \cite{Doss-Wellner:2016ModeConstrained} twice taking $\xi_n = j_1$
  and $\xi_n = j_2$, and $C=0$ (not $C = \lambda(J)$).
  Here $(\vvna)'$ may be the right or left derivative.  Now apply
  Lemma~\ref{lem:rem:L2-to-sup-1}  (taking $I$ in that
  lemma to be our $J$) with
  $\varphi_U' - \varphi_L' = n^{-1/5}\lp 2D + |\vvo^{(2)}(m)| L /2 \rp$
  and $\epsilon  = \check \delta \check K n^{-1}  / L$.
  Then for small enough $\check \delta$,
  \begin{equation*}
    \lp \frac{ \check \delta \check K}{L (2D + |\vvo^{(2)}(m)|L / 2)^2}
    \rp^{1/3} n^{-1/5}
    \le L n^{-1/5} = \lambda(J),
  \end{equation*}
  as needed.  Thus, Lemma~\ref{lem:rem:L2-to-sup-1} allows us to conclude
  \begin{equation*}
    \Vert \vvna - \vvn \Vert
    \le \lp 8 \frac{ n^{-6/5} \check \delta \check K }{L}
    \lp 2D +  \vvo^{(2)}(m) L / 2 \rp \rp^{1/3}.
  \end{equation*}
  Thus taking $\check \delta $ so that $\check \delta \check K D \to 0$, we
  see that
  Lemma~\ref{lem:rem:local-to-global-square-integral}~\ref{item:lem:integral-to-sup:vv}
  holds.
  Then  \ref{item:lem:integral-to-sup:ff}
  follows by the
  delta method (or Taylor expansion of $\exp$).
  Note that $\check K$ and $D$  depend only on $\epsilon$, not on $I$.

  We show \ref{item:lem:integral-to-sup:FF} and
  \ref{item:lem:integral-to-sup:knots} next.  Note that
  \ref{item:lem:integral-to-sup:FF} follows from
  \ref{item:lem:integral-to-sup:knots} and
  \ref{item:lem:integral-to-sup:ff}.  This is because
  \begin{equation*}
    \FFn(x) - \FFna(x) = \FFn(\eta) - \FFna(\eta) + \int_{\eta}^x
    (\ffn(x)-\ffna(x)) dx.
  \end{equation*}
  By \ref{item:lem:integral-to-sup:ff}, the second term above is $\delta
  O_p(n^{-3/5})$ since $x \in J$ satisfies $|x - \eta| \le L n^{-1/5}$.  We
  can next see that the first term in the previous display is $\delta^{1/2}
  O_p(n^{-3/5})$.  Notice that $\sup_{t \in [\eta,\eta^0]} |\int_\eta^t
  d(\FFn(u) - \Fn(u))| = \delta^{1/2} O_p(n^{-3/5})$ by
  Lemma~\ref{lem:rem:FFn-Fn-empirical-proc-arg}, where the random variable
  implicit in the $O_p$ statement depends on $J$ only through $L$.
  Since $|\FFn(\eta) - \Fn(\eta)| \le 1/n$ by Corollary~2.4 of
  \cite{Doss-Wellner:2016ModeConstrained}, we see that $\sup_{t \in
    [\eta,\eta^0]} | \FFn(t) - \Fn(t)| = \delta^{1/2} O_p(n^{-3/5})$.
  Similarly, since $|\FFna(\eta^0) - \Fn(\eta^0)| \le 1/ n$ by Corollary~2.12
  of \cite{Doss-Wellner:2016ModeConstrained}, $\sup_{t \in [\eta,\eta^0]} |
  \FFna(t) - \Fn(t)| = \delta^{1/2} O_p(n^{-3/5})$ by analogous computations.
  Together, these let us conclude that $\sup_{t \in [\eta,\eta^0]} | \FFna(t)
  - \FFn(t)| = \delta^{1/2} O_p(n^{-3/5})$.
  \begin{mylongform}
    \begin{longform}
      an alternative proof approach would be just to use the point $s$ where
      $\FFn(s) = \FFna(s)$

    \end{longform}
  \end{mylongform}

  Now we prove \ref{item:lem:integral-to-sup:knots}.  Let $ \delta > 0$ and
  define $i_{1}$ and $i_{2}$ by $I = [i_1 - \delta n^{-1/5}, i_2 + \delta
  n^{-1/5}]$, taking $\delta$ small enough that $i_1 < i_2$. Let
  $ i_2-i_1 = \tilde M n^{-1/5}$.  Then, by the Taylor expansion of $\vvo'$
  (see the beginning of this proof), $\vvo'( i_2 ) - \vvo'(i_1) \ge
  |\vvo^{(2)}(m)| \tilde M n^{-1/5} / 2$ for $\delta$ small and $n$ large
  enough, since $\vvo^{(2)}$ is continuous.  Additionally, by applying
  Corollary~7.1 of   \cite{Doss-Wellner:2016ModeConstrained} %
  twice, taking $\xi_n =
  i_1$ and $\xi_n = i_2$ and $C=0$ (not $C = \tilde M$), we find $D$
  (independent of $\lambda(I)$) such that $|(\vvna)'(i_1) - \vvo'(i_{1})|$
  and $|(\vvna)'(i_{2}) - \vvo'(i_{2})|$ are, with probability $1 -
  \epsilon$, less than $n^{-1/5} D$.  Here $(\vvna)'$ may be the right or
  left derivative.  For $\delta $ small enough, by assumption $D <
  |\vvo^{(2)}(m)| \tilde M / 8$, and we then have $(\vvna)'(i_{1}+) -
  (\vvna)'(i_{2}-) \ge |\vvo^{(2)}(m)| \tilde M n^{-1/5} / 4$.  We do not
  know a priori how much $(\vvna)'$ decreases at any specific knot in
  $S(\vvna)$, but by partitioning $[i_1,i_2]$ into intervals of a fixed length, we
  can find one such interval on which $(\vvna)'$ decreases by the
  corresponding average amount.  That is, there exists
  a subinterval of $[i_1,i_2]$, denoted
  $J^* = [l^*, r^*],$ %
  of length $2 \delta n^{-1/5}$, such that
  \begin{equation*}
    (\vvna)'(l^*+) - (\vvna)'(r^*-)
    \ge \frac{|\vvo^{(2)}(m)| \tilde M n^{-1/5} / 4}{ (i_2-i_1) / 2\delta n^{-1/5} }
    = \frac{|\vvo^{(2)}(m)| \delta n^{-1/5}}{2}
    \equiv K n^{-1/5}
  \end{equation*}
  since $i_2-i_1 =  \tilde M n^{-1/5}.$ Let $x^*_l = \sup \lb x \in
  J^* : (\vvna - \vvn)'(x) \ge 0 \rb$, and let $x^*_l = l^*$ if the set is empty,
  and let $x^*_r = \inf \lb x \in J^* : (\vvna - \vvn)'(x) \le 0 \rb$, and
  $x^*_r = x^*_l = r^*$ if the set is empty.  Now $(\vvna - \vvn)'$ decreases
  by at least $K n^{-1/5} / 2$ on either $[l^*, x_l^*]$ or on $[x_r^* , r^*]$
  (since $(\vvna- \vvn)'$ is constant on $[x^*_l, x_r^*]$).
  Let $\eta^0_l = \inf S(\vvna) \cap J^*$ and $\eta^0_r = \sup S(\vvna) \cap
  J^*$.
  Now by assumption $\vvn$ is linear on $[\eta^0_l - \delta n^{-1/5},
  \eta^0_r + \delta n^{-1/5}]$ (since $\vvn$ is linear on $J^*$ and within
  $\delta n^{-1/5}$ of any knot of $\vvna$), and so
  \begin{align}
    (\vvna - \vvn)'(u) & \ge \frac{K n^{-1/5}}{2}
    &  \mbox{ for } u \in [\eta^0_l
    - \delta n^{-1/5}, \eta^0_l], &
    \quad \mbox { or }
    \label{eq:knot-lemma:slopeineq-1} \\
    (\vvna - \vvn)'(u) & \le - \frac{ Kn^{-1/5}}{2}
    & \mbox{ for } u \in
    [\eta^0_r, \eta^0_r + \delta n^{-1/5}], & \label{eq:knot-lemma:slopeineq-2}
  \end{align}
  depending on whether $ (\vvna - \vvn)'$ decreases by $K n^{-1/5}/2$ on $[l^*,
  x^*_l]$ (in which case $\eta^0_l \le x^*_l$) or on $[x^*_r, r^*]$ (in which
  case $x^*_r \le \eta^0_r$).  (In the former case,
  \eqref{eq:knot-lemma:slopeineq-1} holds because $(\vvna - \vvn)'$ is
  nonincreasing and its decrease to $0$ on $[l^*, x_l^*]$ actually happens on
  $[\eta^0_l, x_l^*]$ since $\eta^0_l$ is the last knot of $\vvna$ in $J^*$.
  Similar reasoning in the latter case yields
  \eqref{eq:knot-lemma:slopeineq-2}.)
  If \eqref{eq:knot-lemma:slopeineq-1} holds then
  \begin{equation*}
    \sup_{u \in [\eta^0_l -
      \delta n^{-1/5}, \eta^0_l]} | \vvna(u) - \vvn(u)| \ge ( Kn^{-1/5}/2)
    (\delta n^{-1/5}/2) = |\vvo^{(2)}(m) | n^{-2/5} \delta^2 / 8,
  \end{equation*}
  and if \eqref{eq:knot-lemma:slopeineq-2} holds then $\sup_{u \in [\eta^0_r,
    \eta^0_r + \delta n^{-1/5}]} | \vvna(u) - \vvn(u)| \ge |\vvo^{(2)}(m)|
  n^{-2/5} \delta^2 / 8$.  This allows us to lower bound $\int_{I} ( \vvna(u)
  - \vvn(u))^2$, to attain a contradiction with
  \eqref{eq:knot-lemma:integral-ineq}.

  Assume that \eqref{eq:knot-lemma:slopeineq-1} holds.  The case where
  \eqref{eq:knot-lemma:slopeineq-2} holds is shown analogously.  Let $ z =
  \argmin_{u \in [i_1,i_2]} | \vvna(u) - \vvn(u)|$.  Let $L$ be the affine
  function such that $L(z) = \vvna(z) - \vvn(z)$ and $L$ has slope $K
  n^{-1/5} / 2$.  Then for $x \in [\eta^0_l - \delta n^{-1/5}, \eta^0_l]$,
  $|L(x)| \le | \vvna(x) - \vvn(x)|$.  For $z \le x \le \eta^0_l$, this is
  because
  \begin{align*}
    | \vvna(x) - \vvn(x)|
    =  \vvna(x) - \vvn(x)
    & = \vvna(z) - \vvn(z)
    + \int_z^x (\vvna - \vvn)' d\lambda \\
    & \ge L(z) +  \int_z^x L' d\lambda = |L(x)|,
  \end{align*}
  where the first equality holds since $\vvna - \vvn$ is increasing on
  $[\eta^0_l - \delta n^{-1/5}, \eta^0_l]$ (by
  \eqref{eq:knot-lemma:slopeineq-1}), so by the definition of $z$, $\vvna(x)
  - \vvn(x) \ge 0$ for $x \ge z$, and the last is similar.  For $\eta^0_l -
  \delta n^{-1/5} \le x \le z$,
  \begin{align*}
    - |\vvna(x)-\vvn(x)|
    = (\vvna(x) -\vvn)(x)
    & = (\vvna-\vvn)(z) - \int_x^z (\vvna - \vvn)'d\lambda \\
    & \le L(z) - \int_x^z L' d\lambda
    = - |L(x)|,
  \end{align*}
  where the first and last equalities follow because for $\eta_l^0 \le x \le
  z$, since $\vvna - \vvn$ is increasing it must be negative by the
  definition of $z$. Thus,
  $|L(u)| \le | \vvna(u) - \vvn(u)|$ on $[\eta_l^0 - \delta n^{-1/5}, \eta_l^0]$ so
  \begin{equation*}
    \frac{\delta^3 K^2 n^{-1}}{3 \cdot 2^5}
    \le \int_{[\eta_l^0 - \delta n^{-1/5}, \eta_l^0]} L^2 d\lambda
    \le \int_{[\eta_l^0 - \delta n^{-1/5}, \eta_l^0]} (\vvna - \vvn)^2 d\lambda
    \le \int_{I}  (\vvna - \vvn)^2 d\lambda,
  \end{equation*}
  where the quantity on the far left is $\int_0^{\delta n^{-1/5}/2} (x K
  n^{-1/5}/2)^2 dx$.  This is a contradiction if $\delta$ is fixed and we let
  $\check \delta \to 0$, since $[\eta_l^0 - \delta n^{-1/5}, \eta_r^0 +
  \delta n^{-1/5}] \subset I$ by the definition of $[i_1,i_2]$, and then
  $ \int_{I} (\vvna - \vvn)^2 d\lambda \le \check \delta \check K n^{-1}$.  A
  similar inequality can be derived if \eqref{eq:knot-lemma:slopeineq-2}
  holds.  Thus $\delta \to 0$ as $\check \delta \to 0$.

  Finally, we show \ref{item:lem:integral-to-sup:vv-prime} holds with
  similar logic.  Let $\xi_1 < \xi_2$ be points such that $\vvn$ is linear on
  $[\xi_1, \xi_2]$, and let $\delta > 0$.  Then if all knots $\xi^0$ of
  $\vvna$ satisfy $| \xi_i - \xi^0 | > \sqrt{\delta} n^{-1/5}$, $i =1 ,2$,
  then we can see that $\Vert \vvn' - (\vvna)' \Vert_{[\xi_1,\xi_2]} \le
  \sqrt{\delta} n^{-1/5}$.  This is because at any $x \in [\xi_1 + \sqrt{
    \delta} n^{-1/5}, \xi_2 - \sqrt{    \delta} n^{-1/5}]$, if
  $(\vvna - \vvn)'(x) > \sqrt{ \delta } n^{-1/5}$, then $(\vvna - \vvn)' >
  \sqrt{\delta} n^{-1/5}$ on $[x- \sqrt{\delta}n^{-1/5}, x]$ (since $\vvn$ is
  linear on a $\sqrt{\delta} n^{-1/5}$ neighborhood of $x$), so $(\vvna - \vvn)(x
  - \sqrt{\delta} n^{-1/5}, x] > \delta n^{-2/5}$, a contradiction.  Here we
  use the notation $g(a,b] = g(b) - g(a)$.  Similarly if $(\vvna - \vvn)'(x)
  < \sqrt{\delta}n^{-1/5}$ then $(\vvna - \vvn)(x , x+\sqrt{\delta} n^{-1/5}]
  < - \delta n^{-2/5}$, a contradiction.
  We have thus shown that
  if we take $\eta$ and $\eta^0$ to be the largest knot pair within
  $\sqrt{\delta} n^{-1/5}$ (meaning $\max(\eta, \eta^0)$ is largest among
  such pairs) then $\Vert (\vvn - \vvna)' \Vert_{[\max(\eta, \eta^0), \sup J
    - \sqrt{\delta}n^{-1/5}]} \le \sqrt{ \delta} n^{-1/5}$, by
  Part~\eqref{item:lem:integral-to-sup:vv} and by partitioning $[\max(\eta,
  \eta^0), \sup J]$ into intervals on which $\vvn$ is linear.
\end{proof}

The proofs of Parts~\ref{item:lem:integral-to-sup:knots} and
\ref{item:lem:integral-to-sup:vv-prime} in the previous lemma could also be
completed with the roles of $\vvna$ and $\vvn$ reversed.
The next lemma provides the calculation used in
Lemma~\ref{lem:rem:local-to-global-square-integral} to translate an upper
bound on
$\int_{I} \lp \vvna - \vvn \rp^2  d \lambda$
into an upper bound on $\sup_{x \in I} \lp \vvna(x) - \vvn(x)
\rp$ for an appropriate interval $I$.
\begin{lemma}
  \label{lem:rem:L2-to-sup-1}
  Let $\epsilon > 0$.  Assume $\varphi_i$, $i=1,2$, are functions on an
  interval $I$, where
  \begin{equation}
    \label{eq:6}
    \varphi_L' \le \varphi_i'(x) \le \varphi_U' \, \mbox{ for } x \in I, i=1,2,
  \end{equation}
  where $\varphi_i'$ refers to either the left or right derivative and
  $\varphi_L',\varphi_U'$ are real numbers.  Assume
  $I$ is of length no smaller than $\lp \epsilon / \lp
  \varphi_U'-\varphi_L'\rp^2  \rp^{1/3}$.
  Assume  that $\int_I
  \lp \varphi_1(x) - \varphi_2(x) \rp^2  dx \le \epsilon$.
  Then
  \begin{equation*}
    \sup_{x      \in I} | \varphi_1(x)-\varphi_2(x) | \le \lp 8
    \epsilon \lp \varphi_U' - \varphi_L' \rp \rp^{1/3}.
  \end{equation*}
\end{lemma}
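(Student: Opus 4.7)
The plan is to set $g = \varphi_1 - \varphi_2$ and $L = \varphi_U' - \varphi_L'$, and to observe that $g$ is $L$-Lipschitz on $I$: indeed, for $x<y$ in $I$ we have $g(y)-g(x) = \int_x^y (\varphi_1' - \varphi_2')(t)\,dt$ with integrand bounded in absolute value by $\varphi_U'-\varphi_L' = L$. Writing $M = \sup_{x \in I}|g(x)|$, I would pick $x_0 \in \bar I$ attaining (or approaching) the supremum; after exchanging $\varphi_1$ and $\varphi_2$ if necessary, we may assume $g(x_0) = M \ge 0$. The case $M=0$ is vacuous, so assume $M>0$.

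The key observation is that by Lipschitz continuity, $g(x) \ge M/2$ whenever $|x-x_0|\le M/(2L)$. Intersecting this neighborhood with $I$, a short geometric check shows there is always a sub-interval $J \subseteq I$ of length at least $\ell = \min(M/(2L),\lambda(I))$ on which $|g| \ge M/2$: when $x_0$ lies in the interior of $I$ we may move into $I$ on either side, while in the worst case $x_0$ sits at an endpoint of $I$ and we still obtain length $\min(M/(2L), \lambda(I))$ by moving in the single direction that stays in $I$. Therefore
\[
\epsilon \;\ge\; \int_I g^2\,d\lambda \;\ge\; \frac{M^2}{4}\cdot \ell \;=\; \frac{M^2}{4}\min\!\left(\frac{M}{2L},\,\lambda(I)\right).
\]

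A brief case split finishes the argument. If $\ell = M/(2L)$ (i.e., $M/(2L)\le \lambda(I)$), the right-hand side equals $M^3/(8L)$, giving $M \le (8\epsilon L)^{1/3}$ at once. If instead $\ell = \lambda(I) < M/(2L)$, the length hypothesis $\lambda(I)\ge (\epsilon/L^2)^{1/3}$ yields $\epsilon \ge (M^2/4)(\epsilon/L^2)^{1/3}$, which rearranges to $M \le 2(\epsilon L)^{1/3} = (8\epsilon L)^{1/3}$ as well. In either case $\sup_{x\in I}|g(x)| \le (8\epsilon L)^{1/3}$, as required. There is no substantive obstacle; the only care needed is the endpoint geometry, which is precisely why the constant $8$ appears rather than the smaller constant ($3$, say) one would obtain for an interior maximizer.
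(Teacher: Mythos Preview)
Your proof is correct and follows essentially the same route as the paper's: both exploit the $L$-Lipschitz continuity of $g=\varphi_1-\varphi_2$ (with $L=\varphi_U'-\varphi_L'$) to show $|g|\ge M/2$ on a sub-interval of length $\min(M/(2L),\lambda(I))$, then bound the $L^2$-integral from below and solve for $M$. Your case analysis is simply more explicit than the paper's, which leaves the second case ($\lambda(I)<M/(2L)$) implicit in the length hypothesis.
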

\begin{proof}
  Assume that $x$ is such that $\varphi_1(x) - \varphi_2(x) = \delta$, and
  without loss of generality, $\delta > 0$.  Then by \eqref{eq:6}, if $y$ is
  such that $|x - y | \le (\delta/2) / (\varphi_U'-\varphi_L')$ then
  $\varphi_1(y) - \varphi_2(y) \ge \delta / 2$.  Thus, if $I$ is an interval
  whose length is no smaller than $(\delta /2) / (\varphi_U'-\varphi_L')$
  then if $\varphi_1(x) - \varphi_2(x) \ge \delta$ for any
  $x \in J \subseteq I$ where the length of $J$ is equal to
  $(\delta /2) / (\varphi_U'-\varphi_L')$, then
  \begin{equation*}
    \int_I \lp \varphi_1(x) -    \varphi_2(x) \rp^2  dx
    \ge \int_J \lp \varphi_1(x) -    \varphi_2(x) \rp^2 dx
    \ge  \frac{\delta^2}{2^2} \lambda(J)
    = \frac{1}{8}  \frac{\delta^3}{\varphi_U'-\varphi_L'} .
  \end{equation*}
  Thus, substituting $\epsilon = \frac{1}{8}
  \frac{\delta^3}{\varphi_U'-\varphi_L'}$,
  we see that
  for $x \in I$, $|\varphi_1(x)-\varphi_2(x) | \le \lp 8
  \epsilon \lp \varphi_U'-\varphi_L'\rp  \rp^{1/3}$, as desired.
  \begin{mylongform}
    \begin{longform}
      If we set
      $\epsilon = \frac{\tilde{f}}{8}
      \frac{\delta^3}{\varphi_U'-\varphi_L'}$, then $\delta = \lp 8 \epsilon \lp
      \varphi_U'-\varphi_L'\rp / \tilde f \rp^{1/3}$, and then
      \begin{equation*}
        \frac{\delta/2}{ \varphi_U'-\varphi_L'} = \frac{1}{2}
        \frac{8^{1/3}}{\tilde
          f^{1/3}} \epsilon^{1/3} \lp \varphi_U'-\varphi_L' \rp^{-2/3}
        = \frac{\epsilon^{1/3} \lp \varphi_U'-\varphi_L'\rp^{-2/3}}{ \tilde f^{1/3}}
      \end{equation*}
      is the length of $I$.  (Note that $\tilde f$ is now replaced by $1$ in
      the lemma. --2016 sep 09)
    \end{longform}
  \end{mylongform}
\end{proof}

\begin{lemma}
  \label{lem:rem:FFn-Fn-empirical-proc-arg}
  Let %
  either Assumption~\ref{CurvatureAtTheMode} hold at $x_0 = m$ or
  Assumption~\ref{CurvatureAwayFromMode} hold at $x_0 \ne m$, and let $\FFna$
  and $\FFn$ be the log-concave mode-constrained and unconstrained MLEs of
  $\FFo$.  Let $I = [v_1, v_2]$ be a random interval whose dependence on $n$
  is suppressed and such that $n^{1/5}(v_j - x_0) = O_p(1), j = 1,2$. Then
  \begin{equation}
    \label{eq:rem:dFFn-dFn}
    \left \{
      \begin{array}{l} \sup_{t \in I} \lv \int_{[v_1,t]} d(\FFn(u) - \Fn(u)) \rv \\
        \ \\
        \sup_{t \in I} \lv \int_{[v_1,t]}  d(\FFna(u) - \Fn(u)) \rv
      \end{array} \right \}
    = \sqrt{ \lambda(I)} O_p(n^{-2/5}).
  \end{equation}
  The random variables implicit in the $O_p$ statements in
  \eqref{eq:rem:dFFn-dFn} depend on $I$ through its length (in which they are
  increasing) and not the location of its endpoints.
\end{lemma}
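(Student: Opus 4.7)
My plan is to decompose each integral in \eqref{eq:rem:dFFn-dFn} into a ``bias''-type piece governed by the local uniform convergence rate of $\ffn$ (resp.\ $\ffna$), and a purely stochastic piece governed by the oscillation modulus of $\Fn - F_0$. Since $d\FFn = \ffn \, d\lambda$, adding and subtracting $\int_{v_1}^{t} f_0 \, d\lambda = F_0(t) - F_0(v_1)$ gives
\begin{equation*}
  \int_{[v_1,t]} d(\FFn - \Fn)(u) = \int_{v_1}^{t} (\ffn(u) - f_0(u)) \, du \ - \ \bigl[(\Fn - F_0)(t) - (\Fn - F_0)(v_1)\bigr],
\end{equation*}
and similarly with $\ffna$ and $\FFna$ replacing $\ffn$ and $\FFn$.

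For the first summand, under either Assumption~\ref{CurvatureAtTheMode} or Assumption~\ref{CurvatureAwayFromMode}, the uniform local rate $\sup_{|u - x_0| \le M n^{-1/5}} |\ffn(u) - f_0(u)| = O_p((n^{-1}\log n)^{2/5})$ from \cite{DR2009LC} and \cite{BRW2007LCasymp} is available, together with its constrained analogue for $\ffna$ from Theorem~4.7.B of \cite{Doss-Wellner:2016ModeConstrained}. Since $n^{1/5}(v_j - x_0) = O_p(1)$, for any $\epsilon > 0$ I would choose $M = M_\epsilon$ large enough that $I \subseteq [x_0 - M n^{-1/5}, x_0 + M n^{-1/5}]$ with probability at least $1 - \epsilon$. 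On that event,
\begin{equation*}
  \left| \int_{v_1}^{t} (\ffn - f_0) \, d\lambda \right| \le \lambda(I) \cdot O_p(n^{-2/5}(\log n)^{2/5}) \le \sqrt{\lambda(I)} \sqrt{M n^{-1/5}} \, O_p(n^{-2/5}(\log n)^{2/5}),
\end{equation*}
which is $\sqrt{\lambda(I)} \, O_p(n^{-2/5})$ (in fact of strictly smaller order), with the implicit constant increasing in $M$ and therefore in the length bound on $I$.

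For the stochastic piece, I would apply a standard local empirical-process oscillation bound over $[x_0 - M n^{-1/5}, x_0 + M n^{-1/5}]$: the class $\mathcal F_M = \{\mathbf 1_{(v_1, t]} : v_1, t \in [x_0 - M n^{-1/5}, x_0 + M n^{-1/5}]\}$ is VC with envelope having $L_2(P)$-norm $O(n^{-1/10})$, so a variance-adaptive uniform Bernstein (or Talagrand-type) inequality gives
\begin{equation*}
  \sup_{v_1 \le t \le v_2} |(\Fn - F_0)(t) - (\Fn - F_0)(v_1)| = O_p\!\left(\sqrt{\lambda(I)/n}\right) = \sqrt{\lambda(I)} \, O_p(n^{-1/2}),
\end{equation*}
uniformly in the random endpoints of $I \subseteq [x_0 - M n^{-1/5}, x_0 + M n^{-1/5}]$, again with constant monotone in $M$. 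Summing the two pieces yields the unconstrained bound in \eqref{eq:rem:dFFn-dFn}; the mode-constrained case is identical upon substituting $\ffna$ and Theorem~4.7.B of \cite{Doss-Wellner:2016ModeConstrained} for $\ffn$ and the unconstrained rate.

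The main technical obstacle is the bookkeeping needed for the monotonicity claim: one must phrase both the MLE rate and the empirical-process modulus as suprema over \emph{deterministic} $M n^{-1/5}$ neighborhoods rather than over the random interval $I$, verify that the resulting $O_p(1)$ constants depend on $I$ only through $\lambda(I)$ and are monotone in it, and only then intersect with the high-probability event $\{I \subseteq [x_0 - M n^{-1/5}, x_0 + M n^{-1/5}]\}$ before letting $M \to \infty$ in the usual way. No single step is deep, but this uniform-in-neighborhood formulation is exactly what is needed to deliver the stated monotonicity of the implicit $O_p$ random variables in $\lambda(I)$.
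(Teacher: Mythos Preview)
Your proposal is correct and follows essentially the same route as the paper: decompose $\int_{[v_1,t]} d(\FFn-\Fn)$ into a bias piece $\int_{v_1}^t(\ffn-f_0)\,d\lambda$ and the empirical-process increment $(\Fn-F_0)(v_1,t]$, then bound each separately. The only cosmetic difference is that the paper further centers the bias piece by the first-order Taylor polynomial of $f_0$ at $v_1$ and invokes Lemma~4.5 of \cite{BRW2007LCasymp} (and Proposition~5.3/Corollary~5.4 of \cite{Doss-Wellner:2016ModeConstrained} for the constrained case) to get $\lambda(I)O_p(n^{-2/5})$, whereas you use the direct uniform rate $O_p((n^{-1}\log n)^{2/5})$; and for the stochastic piece the paper appeals to Lemma~7.20 of \cite{Doss-Wellner:2016ModeConstrained} where you invoke a standard VC oscillation bound. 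Both variants deliver the same (indeed a slightly sharper) bound and handle the monotonicity-in-$\lambda(I)$ bookkeeping the same way.
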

\begin{proof}
  We analyze $    \sup_{t \in I} \lv \int_{v_1}^t
  d(\FFn(u) - \Fn(u)) \rv$ first.   Note
  \begin{equation}
    \label{eq:rem:7}
    \begin{split}
      \MoveEqLeft
      \sup_{t \in I} \left| \int_{[v_1,t]} d  \lp     \Fn
        - \FFn \rp  \right| \\
      & \le  \sup_{t \in I}
      \bigg| \int_{v_1}^{t} \lp \ffn(u) - \ffo(v_1)  - (u- v_1)
      \ffo'(v_1) \rp du  \\
      & \qquad \qquad \qquad - \int_{v_1}^{t} \lp \ffo(u) - \ffo(v_1) - (u- v_1) \ffo'(v_1) \rp
      du \\
      & \qquad \qquad \qquad - \int_{[v_1,t]} d\lp \Fn-\FFo\rp \bigg|.
    \end{split}
  \end{equation}
  By (the proof of) Lemma~8.16 of  \cite{Doss-Wellner:2016ModeConstrained},
  $$
  \sup_{t \in I}  \lv \int_{[v_1, t]} d\lp \Fn - \FFo \rp \rv =
  \sqrt{\lambda(I)} O_p(n^{-2/5}).
  $$
  The supremum over the middle term in \eqref{eq:rem:7} is $\lambda(I)
  O_p(n^{-2/5})$ by a Taylor expansion of $\ffo$,
  and applying in addition Lemma~4.5 of
  \cite{BRW2007LCasymp}, we see that the supremum over the first term in
  \eqref{eq:rem:7} is also $\lambda(I) O_p(n^{-2/5})$.

  The same analysis, using Proposition~5.3 or Corollary~5.4
  of \cite{Doss-Wellner:2016ModeConstrained}, applies to $ \sup_{t \in I} \lv
  \int_{[v_1,t]} d(\FFna(u) - \Fn(u)) \rv$.  Note in all cases that the
  random variables implicit in the $O_p$ statements depend on $I$ only
  through its length (and they are increasing in the length) and not the
  location of its endpoints, since $\Vert {\ffo} \Vert = \ffo(m) < \infty$ and
  since $\ffo^{(2)}$ is continuous and so uniformly bounded in a neighborhood
  of $x_0$.
\end{proof}

When we apply the previous lemma, the length of $I$ will depend on
$  \epsilon$ which gives the probability bound implied by our $o_p$ statements
whereas its endpoints will depend on $\delta$, which gives the size bound
implied by our $o_p$ statements.

\begin{lemma}
  \label{lem:rem:epsilon-inequality}
  Let $\epsilon(x)$ and $\tilde \epsilon(x)$ be defined by $e^x = 1 + x +
  2^{-1} x^2 e^{\epsilon(x)}$ and $ e^x = 1 + x e^{\tilde \epsilon(x)}.$
  Then
  \begin{equation}
    e^{\epsilon(x)} \le 2 e^{\tilde \epsilon(x)}.
  \end{equation}
\end{lemma}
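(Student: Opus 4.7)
The plan is to first solve for $e^{\epsilon(x)}$ and $e^{\tilde\epsilon(x)}$ explicitly in terms of $x$, then reduce the stated inequality to a simple one-variable calculus fact.

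From the defining equations, for $x \neq 0$ we have
\[
e^{\epsilon(x)} \;=\; \frac{2(e^x - 1 - x)}{x^2}, \qquad
e^{\tilde\epsilon(x)} \;=\; \frac{e^x - 1}{x},
\]
with both quantities extended to $1$ at $x=0$ by continuity (Taylor expansion). Thus the claim $e^{\epsilon(x)} \le 2 e^{\tilde\epsilon(x)}$ is equivalent, after multiplying both sides by $x^2 > 0$ (and handling $x=0$ by continuity), to
\[
e^x - 1 - x \;\le\; x(e^x - 1),
\]
that is, $g(x) := e^x(1-x) - 1 \le 0$ for all $x \in \mathbb{R}$.

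The inequality $g(x)\le 0$ follows from a one-line calculus argument: $g(0)=0$ and $g'(x) = -x e^x$, so $g$ is strictly increasing on $(-\infty,0)$ and strictly decreasing on $(0,\infty)$, whence $g$ attains its maximum value $0$ at $x=0$. This establishes the inequality for all real $x$, and the equality case at $x=0$ is consistent with the continuous extension noted above. The only step requiring any care is the sign bookkeeping when $x<0$ (where $e^{\tilde\epsilon(x)} = (e^x-1)/x$ must still be positive, as both numerator and denominator are negative), but this is immediate.

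There is no real obstacle here; the lemma is essentially an arithmetic identity combined with the elementary inequality $e^x(1-x) \le 1$.
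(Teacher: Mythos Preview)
Your proof is correct and is in fact cleaner than the paper's. The paper expands both $e^{\epsilon(x)}$ and $e^{\tilde\epsilon(x)}$ as power series in $x$ and then splits into cases: for $x\ge 0$ a term-by-term coefficient comparison $\frac{2}{(k+2)!}\le\frac{1}{(k+1)!}$ gives the (slightly stronger) bound $e^{\epsilon(x)}\le e^{\tilde\epsilon(x)}$, while for $x<0$ the difference $2e^{\tilde\epsilon(x)}-e^{\epsilon(x)}$ is written as $\frac{2}{x}\sum_{k\ge 1}\frac{k}{(k+1)!}x^k$ and argued to be nonnegative via sign considerations on the series. Your route bypasses the series manipulation and the case split by reducing directly to the single scalar inequality $e^x(1-x)\le 1$, which follows immediately from $g'(x)=-xe^x$. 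The paper's approach buys the sharper constant $1$ in place of $2$ on the half-line $x\ge 0$, but this refinement is not used anywhere; your argument is shorter, handles both signs of $x$ uniformly, and is arguably more transparent than the paper's alternating-series step for $x<0$.
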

\begin{proof}
  We can see
  \begin{eqnarray*}
    e^{\epsilon(x)} = \frac{e^{x}-1 -x}{(x^2/2)} = \frac{2}{x^2} \sum_{k=2}^\infty \frac{x^k}{k!} = \sum_{k=0}^\infty \frac{2 x^k}{(k+2)!} .
  \end{eqnarray*}
  Similarly,
  \begin{eqnarray*}
    e^{\tilde{\epsilon}(x)} = \frac{e^{x}-1 }{x} = \frac{1}{x} \sum_{k=1}^\infty \frac{x^k}{k!} = \sum_{k=0}^\infty \frac{x^k}{(k+1)!} .
  \end{eqnarray*}
  Comparing coeffients in the two series, we see that
  \begin{eqnarray*}
    \frac{2}{(k+2)!} \le \frac{1}{(k+1)!}  \ \ \mbox{for all} \ k\ge 0
  \end{eqnarray*}
  since $k+2 \ge 2$ for $k\ge 0$.  It follows that $e^{\epsilon (x)} \le e^{\tilde{\epsilon} (x)} $ for all $x \ge 0$.
  This implies that $\epsilon (x) \le \tilde{\epsilon }(x)$ for all $x \ge 0$.

  Now for $x \le 0$ we have
  \begin{align*}
    2e^{\tilde{\epsilon} (x)}  - e^{\epsilon (x)}
    & =  \frac{2}{x} \left \{ e^x -1 - \frac{1}{x} (e^x -1 -x ) \right \} \\
    & =  \frac{2}{x} \left \{ \sum_{k=1}^ \infty  \frac{x^k}{k!} - \sum_{k=1}^\infty
      \frac{ x^k}{(k+1)!} \right \}
    =  \frac{2}{x} \sum_{k=1}^\infty \frac{k}{(k+1)!} x^k
  \end{align*}
  where the infinite sum is negative for all $x<0$ since the first term is negative.  It follows that
  \begin{eqnarray*}
    2e^{\tilde{\epsilon} (x)}  - e^{\epsilon (x)}  \ge 0
  \end{eqnarray*}
  for all $x\le 0$.     Combined with the result for $x\ge 0$ the claimed result holds:
  $e^{\epsilon(x)} \le 2 e^{\tilde{\epsilon}(x)}$ for all $x \in \RR$.
\end{proof}

\bibliographystyle{imsart-nameyear}
\bibliography{ModeConstrained}

\def\cprime{$'$}
\begin{thebibliography}{70}

\bibitem[\protect\citeauthoryear{Abraham, Biau and
  Beno{\^{\i}}t}{2003}]{MR1985502}
\begin{barticle}[author]
\bauthor{\bsnm{Abraham},~\bfnm{Christophe}\binits{C.}},
  \bauthor{\bsnm{Biau},~\bfnm{G{\'e}rard}\binits{G.}} \AND
  \bauthor{\bsnm{Beno{\^{\i}}t},~\bfnm{Cadre}\binits{C.}}
(\byear{2003}).
\btitle{Simple estimation of the mode of a multivariate density}.
\bjournal{Canad. J. Statist.}
\bvolume{31}
\bpages{23--34}.
\bmrnumber{1985502 (2004c:62085)}
\end{barticle}
\endbibitem

\bibitem[\protect\citeauthoryear{Bahadur and Savage}{1956}]{MR0084241}
\begin{barticle}[author]
\bauthor{\bsnm{Bahadur},~\bfnm{R.~R.}\binits{R.~R.}} \AND
  \bauthor{\bsnm{Savage},~\bfnm{Leonard~J.}\binits{L.~J.}}
(\byear{1956}).
\btitle{The nonexistence of certain statistical procedures in nonparametric
  problems}.
\bjournal{Ann. Math. Statist.}
\bvolume{27}
\bpages{1115--1122}.
\bmrnumber{0084241}
\end{barticle}
\endbibitem

\bibitem[\protect\citeauthoryear{Balabdaoui, Rufibach and
  Wellner}{2009}]{BRW2007LCasymp}
\begin{barticle}[author]
\bauthor{\bsnm{Balabdaoui},~\bfnm{Fadoua}\binits{F.}},
  \bauthor{\bsnm{Rufibach},~\bfnm{Kaspar}\binits{K.}} \AND
  \bauthor{\bsnm{Wellner},~\bfnm{Jon~A.}\binits{J.~A.}}
(\byear{2009}).
\btitle{Limit distribution theory for maximum likelihood estimation of a
  log-concave density}.
\bjournal{Ann. Statist.}
\bvolume{37}
\bpages{1299--1331}.
\bmrnumber{2509075 (2010h:62290)}
\end{barticle}
\endbibitem

\bibitem[\protect\citeauthoryear{Banerjee}{2007}]{MR2341693}
\begin{barticle}[author]
\bauthor{\bsnm{Banerjee},~\bfnm{Moulinath}\binits{M.}}
(\byear{2007}).
\btitle{Likelihood based inference for monotone response models}.
\bjournal{Ann. Statist.}
\bvolume{35}
\bpages{931--956}.
\bdoi{10.1214/009053606000001578}.
\bmrnumber{2341693 (2008h:62079)}
\end{barticle}
\endbibitem

\bibitem[\protect\citeauthoryear{Banerjee and Wellner}{2001}]{MR1891743}
\begin{barticle}[author]
\bauthor{\bsnm{Banerjee},~\bfnm{Moulinath}\binits{M.}} \AND
  \bauthor{\bsnm{Wellner},~\bfnm{Jon~A.}\binits{J.~A.}}
(\byear{2001}).
\btitle{Likelihood ratio tests for monotone functions}.
\bjournal{Ann. Statist.}
\bvolume{29}
\bpages{1699--1731}.
\bmrnumber{MR1891743 (2003c:62072)}
\end{barticle}
\endbibitem

\bibitem[\protect\citeauthoryear{Blum and Rosenblatt}{1966}]{MR0205414}
\begin{barticle}[author]
\bauthor{\bsnm{Blum},~\bfnm{J.~R.}\binits{J.~R.}} \AND
  \bauthor{\bsnm{Rosenblatt},~\bfnm{Judah}\binits{J.}}
(\byear{1966}).
\btitle{On some statistical problems requiring purely sequential sampling
  schemes}.
\bjournal{Ann. Inst. Statist. Math.}
\bvolume{18}
\bpages{351--355}.
\bmrnumber{0205414}
\end{barticle}
\endbibitem

\bibitem[\protect\citeauthoryear{Chen and Samworth}{2016}]{MR3534348}
\begin{barticle}[author]
\bauthor{\bsnm{Chen},~\bfnm{Yining}\binits{Y.}} \AND
  \bauthor{\bsnm{Samworth},~\bfnm{Richard~J.}\binits{R.~J.}}
(\byear{2016}).
\btitle{Generalized additive and index models with shape constraints}.
\bjournal{J. R. Stat. Soc. Ser. B. Stat. Methodol.}
\bvolume{78}
\bpages{729--754}.
\end{barticle}
\endbibitem

\bibitem[\protect\citeauthoryear{Cule and Samworth}{2010}]{MR2645484}
\begin{barticle}[author]
\bauthor{\bsnm{Cule},~\bfnm{Madeleine}\binits{M.}} \AND
  \bauthor{\bsnm{Samworth},~\bfnm{Richard}\binits{R.}}
(\byear{2010}).
\btitle{Theoretical properties of the log-concave maximum likelihood estimator
  of a multidimensional density}.
\bjournal{Electron. J. Stat.}
\bvolume{4}
\bpages{254--270}.
\bmrnumber{2645484 (2012f:62071)}
\end{barticle}
\endbibitem

\bibitem[\protect\citeauthoryear{Cule, Samworth and Stewart}{2010}]{MR2758237}
\begin{barticle}[author]
\bauthor{\bsnm{Cule},~\bfnm{Madeleine}\binits{M.}},
  \bauthor{\bsnm{Samworth},~\bfnm{Richard}\binits{R.}} \AND
  \bauthor{\bsnm{Stewart},~\bfnm{Michael}\binits{M.}}
(\byear{2010}).
\btitle{Maximum likelihood estimation of a multi-dimensional log-concave
  density}.
\bjournal{J. R. Stat. Soc. Ser. B Stat. Methodol.}
\bvolume{72}
\bpages{545--607}.
\bmrnumber{2758237}
\end{barticle}
\endbibitem

\bibitem[\protect\citeauthoryear{Dharmadhikari and Joag-Dev}{1988}]{DJ1988UCA}
\begin{bbook}[author]
\bauthor{\bsnm{Dharmadhikari},~\bfnm{Sudhakar}\binits{S.}} \AND
  \bauthor{\bsnm{Joag-Dev},~\bfnm{Kumar}\binits{K.}}
(\byear{1988}).
\btitle{Unimodality, convexity, and applications}.
\bseries{Probability and Mathematical Statistics}.
\bpublisher{Academic Press Inc.}, \baddress{Boston, MA}.
\bmrnumber{MR954608 (89k:60020)}
\end{bbook}
\endbibitem

\bibitem[\protect\citeauthoryear{Donoho}{1988}]{Donoho1988oneSidedInference}
\begin{barticle}[author]
\bauthor{\bsnm{Donoho},~\bfnm{D.~L.}\binits{D.~L.}}
(\byear{1988}).
\btitle{One-sided inference about functionals of a density}.
\bjournal{Ann. Statist.}
\bpages{1390--1420}.
\end{barticle}
\endbibitem

\bibitem[\protect\citeauthoryear{Donoho and Liu}{1991}]{MR1105839}
\begin{barticle}[author]
\bauthor{\bsnm{Donoho},~\bfnm{David~L.}\binits{D.~L.}} \AND
  \bauthor{\bsnm{Liu},~\bfnm{Richard~C.}\binits{R.~C.}}
(\byear{1991}).
\btitle{Geometrizing rates of convergence. {II}, {III}}.
\bjournal{Ann. Statist.}
\bvolume{19}
\bpages{633--667, 668--701}.
\bmrnumber{1105839}
\end{barticle}
\endbibitem

\bibitem[\protect\citeauthoryear{Doss}{2013a}]{doss:logcondens.mode}
\begin{bmisc}[author]
\bauthor{\bsnm{Doss},~\bfnm{Charles~R.}\binits{C.~R.}}
(\byear{2013}a).
\btitle{{logcondens.mode}: Compute MLE of log-concave density on R with fixed
  mode, and perform inference for the mode}.
\bnote{R package version 1.0.1. Available from
  http://CRAN.R-project.org/package=logcondens.mode.}
\end{bmisc}
\endbibitem

\bibitem[\protect\citeauthoryear{Doss}{2013b}]{Doss:2013}
\begin{bphdthesis}[author]
\bauthor{\bsnm{Doss},~\bfnm{Charles~R.}\binits{C.~R.}}
(\byear{2013}b).
\btitle{Shape-constrained inference for concave-transformed densities and their
  modes.}
\btype{PhD thesis}, \bschool{University of Washington}.
\end{bphdthesis}
\endbibitem

\bibitem[\protect\citeauthoryear{Doss}{2018}]{Doss:2018uh}
\begin{barticle}[author]
\bauthor{\bsnm{Doss},~\bfnm{Charles~R}\binits{C.~R.}}
(\byear{2018}).
\btitle{Concave regression: value-constrained estimation and likelihood
  ratio-based inference}.
\bjournal{arXiv:1805.09873}.
\end{barticle}
\endbibitem

\bibitem[\protect\citeauthoryear{Doss and Wellner}{2016a}]{DossWellner:2016a}
\begin{barticle}[author]
\bauthor{\bsnm{Doss},~\bfnm{Charles~R.}\binits{C.~R.}} \AND
  \bauthor{\bsnm{Wellner},~\bfnm{Jon~A.}\binits{J.~A.}}
(\byear{2016}a).
\btitle{Global rates of convergence of the {MLEs} of log-concave and
  $s$-concave densities}.
\bjournal{Ann. Statist.}
\bvolume{44}
\bpages{954--981}.
\bnote{With supplementary material available online}.
\end{barticle}
\endbibitem

\bibitem[\protect\citeauthoryear{Doss and
  Wellner}{2016b}]{Doss-Wellner:2016ModeInference-arxiv-v2}
\begin{barticle}[author]
\bauthor{\bsnm{Doss},~\bfnm{Charles~R.}\binits{C.~R.}} \AND
  \bauthor{\bsnm{Wellner},~\bfnm{Jon~A.}\binits{J.~A.}}
(\byear{2016}b).
\btitle{Inference for the mode of a log-concave density}.
\bnote{arXiv:1611.10348v2}.
\end{barticle}
\endbibitem

\bibitem[\protect\citeauthoryear{Doss and
  Wellner}{2018}]{Doss-Wellner:2016ModeConstrained}
\begin{barticle}[author]
\bauthor{\bsnm{Doss},~\bfnm{Charles~R.}\binits{C.~R.}} \AND
  \bauthor{\bsnm{Wellner},~\bfnm{Jon~A.}\binits{J.~A.}}
(\byear{2018}).
\btitle{Log-concave density estimation with symmetry or modal constraints}.
\bnote{arXiv:1611.10335v3}.
\end{barticle}
\endbibitem

\bibitem[\protect\citeauthoryear{D{\"u}mbgen and Rufibach}{2009}]{DR2009LC}
\begin{barticle}[author]
\bauthor{\bsnm{D{\"u}mbgen},~\bfnm{Lutz}\binits{L.}} \AND
  \bauthor{\bsnm{Rufibach},~\bfnm{Kaspar}\binits{K.}}
(\byear{2009}).
\btitle{Maximum likelihood estimation of a log-concave density and its
  distribution function: basic properties and uniform consistency}.
\bjournal{Bernoulli}
\bvolume{15}
\bpages{40--68}.
\bmrnumber{2546798}
\end{barticle}
\endbibitem

\bibitem[\protect\citeauthoryear{D\"umbgen, Samworth and
  Schuhmacher}{2011}]{DSS2011LCreg}
\begin{barticle}[author]
\bauthor{\bsnm{D\"umbgen},~\bfnm{L.}\binits{L.}},
  \bauthor{\bsnm{Samworth},~\bfnm{R.}\binits{R.}} \AND
  \bauthor{\bsnm{Schuhmacher},~\bfnm{D.}\binits{D.}}
(\byear{2011}).
\btitle{Approximation by log-concave distributions, with applications to
  regression}.
\bjournal{Ann. Statist.}
\bvolume{39}
\bpages{702--730}.
\end{barticle}
\endbibitem

\bibitem[\protect\citeauthoryear{Eddy}{1980}]{Eddy1980mode}
\begin{barticle}[author]
\bauthor{\bsnm{Eddy},~\bfnm{William~F.}\binits{W.~F.}}
(\byear{1980}).
\btitle{Optimum kernel estimators of the mode}.
\bjournal{Ann. Statist.}
\bvolume{8}
\bpages{pp. 870-882}.
\end{barticle}
\endbibitem

\bibitem[\protect\citeauthoryear{Ehm}{1996}]{Ehm1996cuspMode}
\begin{bincollection}[author]
\bauthor{\bsnm{Ehm},~\bfnm{W.}\binits{W.}}
(\byear{1996}).
\btitle{Adaptive kernel estimation of a cusp-shaped mode}.
In \bbooktitle{Applied mathematics and parallel computing}
\bpages{109--120}.
\bpublisher{Physica}, \baddress{Heidelberg}.
\bmrnumber{1469264}
\end{bincollection}
\endbibitem

\bibitem[\protect\citeauthoryear{Groeneboom, Jongbloed and
  Wellner}{2001a}]{MR1891741}
\begin{barticle}[author]
\bauthor{\bsnm{Groeneboom},~\bfnm{Piet}\binits{P.}},
  \bauthor{\bsnm{Jongbloed},~\bfnm{Geurt}\binits{G.}} \AND
  \bauthor{\bsnm{Wellner},~\bfnm{Jon~A.}\binits{J.~A.}}
(\byear{2001}a).
\btitle{A canonical process for estimation of convex functions: the
  ``invelope'' of integrated {B}rownian motion {$+t\sp 4$}}.
\bjournal{Ann. Statist.}
\bvolume{29}
\bpages{1620--1652}.
\bmrnumber{MR1891741 (2003c:62075)}
\end{barticle}
\endbibitem

\bibitem[\protect\citeauthoryear{Groeneboom, Jongbloed and
  Wellner}{2001b}]{MR1891742}
\begin{barticle}[author]
\bauthor{\bsnm{Groeneboom},~\bfnm{Piet}\binits{P.}},
  \bauthor{\bsnm{Jongbloed},~\bfnm{Geurt}\binits{G.}} \AND
  \bauthor{\bsnm{Wellner},~\bfnm{Jon~A.}\binits{J.~A.}}
(\byear{2001}b).
\btitle{Estimation of a convex function: characterizations and asymptotic
  theory}.
\bjournal{Ann. Statist.}
\bvolume{29}
\bpages{1653--1698}.
\bmrnumber{MR1891742 (2003a:62047)}
\end{barticle}
\endbibitem

\bibitem[\protect\citeauthoryear{Groeneboom and Jongbloed}{2015}]{MR3375875}
\begin{barticle}[author]
\bauthor{\bsnm{Groeneboom},~\bfnm{Piet}\binits{P.}} \AND
  \bauthor{\bsnm{Jongbloed},~\bfnm{Geurt}\binits{G.}}
(\byear{2015}).
\btitle{Nonparametric confidence intervals for monotone functions}.
\bjournal{Ann. Statist.}
\bvolume{43}
\bpages{2019--2054}.
\bmrnumber{3375875}
\end{barticle}
\endbibitem

\bibitem[\protect\citeauthoryear{Hall}{1984}]{MR771431}
\begin{barticle}[author]
\bauthor{\bsnm{Hall},~\bfnm{Peter}\binits{P.}}
(\byear{1984}).
\btitle{On unimodality and rates of convergence for stable laws}.
\bjournal{J. London Math. Soc. (2)}
\bvolume{30}
\bpages{371--384}.
\end{barticle}
\endbibitem

\bibitem[\protect\citeauthoryear{Hall and Johnstone}{1992}]{MR1160479}
\begin{barticle}[author]
\bauthor{\bsnm{Hall},~\bfnm{Peter}\binits{P.}} \AND
  \bauthor{\bsnm{Johnstone},~\bfnm{Iain}\binits{I.}}
(\byear{1992}).
\btitle{Empirical functional and efficient smoothing parameter selection}.
\bjournal{J. Roy. Statist. Soc. Ser. B}
\bvolume{54}
\bpages{475--530}.
\bnote{With discussion and a reply by the authors}.
\bmrnumber{1160479}
\end{barticle}
\endbibitem

\bibitem[\protect\citeauthoryear{Han and Wellner}{2016}]{han:2015tr}
\begin{barticle}[author]
\bauthor{\bsnm{Han},~\bfnm{Qiyang}\binits{Q.}} \AND
  \bauthor{\bsnm{Wellner},~\bfnm{Jon~A.}\binits{J.~A.}}
(\byear{2016}).
\btitle{Approximation and estimation of {$s$}-concave densities via {R}\'enyi
  divergences}.
\bjournal{Ann. Statist.}
\bvolume{44}
\bpages{1332--1359}.
\bmrnumber{3485962}
\end{barticle}
\endbibitem

\bibitem[\protect\citeauthoryear{Hanson and Pledger}{1976}]{HP1976concave}
\begin{barticle}[author]
\bauthor{\bsnm{Hanson},~\bfnm{DL}\binits{D.}} \AND
  \bauthor{\bsnm{Pledger},~\bfnm{G.}\binits{G.}}
(\byear{1976}).
\btitle{Consistency in concave regression}.
\bjournal{Ann. Statist.}
\bvolume{4}
\bpages{1038--1050}.
\end{barticle}
\endbibitem

\bibitem[\protect\citeauthoryear{H{\"a}rdle, Marron and Wand}{1990}]{MR1049312}
\begin{barticle}[author]
\bauthor{\bsnm{H{\"a}rdle},~\bfnm{Wolfgang}\binits{W.}},
  \bauthor{\bsnm{Marron},~\bfnm{J.~S.}\binits{J.~S.}} \AND
  \bauthor{\bsnm{Wand},~\bfnm{M.~P.}\binits{M.~P.}}
(\byear{1990}).
\btitle{Bandwidth choice for density derivatives}.
\bjournal{J. Roy. Statist. Soc. Ser. B}
\bvolume{52}
\bpages{223--232}.
\bmrnumber{1049312}
\end{barticle}
\endbibitem

\bibitem[\protect\citeauthoryear{Has'minskii}{1979}]{Hasminskii1979mode}
\begin{barticle}[author]
\bauthor{\bsnm{Has'minskii},~\bfnm{RZ}\binits{R.}}
(\byear{1979}).
\btitle{Lower bound for the risks of nonparametric estimates of the mode}.
\bjournal{Contribution to Statistics: Jaroslav H{\'a}jek Memorial Volume}
\bpages{91--97}.
\end{barticle}
\endbibitem

\bibitem[\protect\citeauthoryear{Hazelton}{1996a}]{MR1394655}
\begin{barticle}[author]
\bauthor{\bsnm{Hazelton},~\bfnm{Martin}\binits{M.}}
(\byear{1996}a).
\btitle{Bandwidth selection for local density estimators}.
\bjournal{Scand. J. Statist.}
\bvolume{23}
\bpages{221--232}.
\bmrnumber{1394655}
\end{barticle}
\endbibitem

\bibitem[\protect\citeauthoryear{Hazelton}{1996b}]{MR1618191}
\begin{barticle}[author]
\bauthor{\bsnm{Hazelton},~\bfnm{Martin~L.}\binits{M.~L.}}
(\byear{1996}b).
\btitle{Optimal rates for local bandwidth selection}.
\bjournal{J. Nonparametr. Statist.}
\bvolume{7}
\bpages{57--66}.
\bmrnumber{1618191}
\end{barticle}
\endbibitem

\bibitem[\protect\citeauthoryear{Herrmann and
  Ziegler}{2004}]{HerrmannZiegler2004mode}
\begin{barticle}[author]
\bauthor{\bsnm{Herrmann},~\bfnm{Eva}\binits{E.}} \AND
  \bauthor{\bsnm{Ziegler},~\bfnm{Klaus}\binits{K.}}
(\byear{2004}).
\btitle{Rates of consistency for nonparametric estimation of the mode in
  absence of smoothness assumptions}.
\bjournal{Statist. Probab. Lett.}
\bvolume{68}
\bpages{359 - 368}.
\end{barticle}
\endbibitem

\bibitem[\protect\citeauthoryear{Hildreth}{1954}]{Hildreth1954concave}
\begin{barticle}[author]
\bauthor{\bsnm{Hildreth},~\bfnm{Clifford}\binits{C.}}
(\byear{1954}).
\btitle{Point estimates of ordinates of concave functions}.
\bjournal{J. Amer. Statist. Assoc.}
\bvolume{49}
\bpages{598--619}.
\end{barticle}
\endbibitem

\bibitem[\protect\citeauthoryear{{Hoffleit} and
  {Warren}}{1991}]{HW1991brightstar}
\begin{barticle}[author]
\bauthor{\bsnm{{Hoffleit}},~\bfnm{D.}\binits{D.}} \AND
  \bauthor{\bsnm{{Warren}},~\bfnm{W.~H.}\binits{W.~H.} \bsuffix{Jr.}}
(\byear{1991}).
\bjournal{Bright Star Catalogue}.
\bnote{{Y}ale {U}niversity {O}bservatory, {N}ew {H}aven.
  http://adsabs.harvard.edu/abs/1995yCat.5050....0H}.
\end{barticle}
\endbibitem

\bibitem[\protect\citeauthoryear{Hogg and Craig}{1970}]{MR0251823}
\begin{bbook}[author]
\bauthor{\bsnm{Hogg},~\bfnm{Robert~V.}\binits{R.~V.}} \AND
  \bauthor{\bsnm{Craig},~\bfnm{Allen~T.}\binits{A.~T.}}
(\byear{1970}).
\btitle{Introduction to Mathematical Statistics}.
\bseries{Third edition}.
\bpublisher{The Macmillan Co., New York; Collier-Macmillan Ltd., London}.
\bmrnumber{0251823}
\end{bbook}
\endbibitem

\bibitem[\protect\citeauthoryear{Jankowski and
  Wellner}{2009}]{Jankowski:2009bx}
\begin{barticle}[author]
\bauthor{\bsnm{Jankowski},~\bfnm{Hanna~K.}\binits{H.~K.}} \AND
  \bauthor{\bsnm{Wellner},~\bfnm{Jon~A.}\binits{J.~A.}}
(\byear{2009}).
\btitle{Nonparametric estimation of a convex bathtub-shaped hazard function}.
\bjournal{Bernoulli}
\bvolume{15}
\bpages{1010--1035}.
\end{barticle}
\endbibitem

\bibitem[\protect\citeauthoryear{Kim}{1994}]{MR1327618}
\begin{barticle}[author]
\bauthor{\bsnm{Kim},~\bfnm{JeanKyung}\binits{J.}}
(\byear{1994}).
\btitle{Asymptotic theory for multi-dimensional mode estimator}.
\bjournal{J. Korean Statist. Soc.}
\bvolume{23}
\bpages{251--269}.
\bmrnumber{1327618}
\end{barticle}
\endbibitem

\bibitem[\protect\citeauthoryear{Kim and Samworth}{2016}]{Kim-Samworth:16}
\begin{barticle}[author]
\bauthor{\bsnm{Kim},~\bfnm{Arlene K.~H.}\binits{A.~K.~H.}} \AND
  \bauthor{\bsnm{Samworth},~\bfnm{Richard~J.}\binits{R.~J.}}
(\byear{2016}).
\btitle{Global rates of convergence in log-concave density estimation}.
\bjournal{Ann. Statist.}
\bvolume{44}
\bpages{2756--2779}.
\bmrnumber{3576560}
\end{barticle}
\endbibitem

\bibitem[\protect\citeauthoryear{Klemel{\"a}}{2005}]{MR2112688}
\begin{barticle}[author]
\bauthor{\bsnm{Klemel{\"a}},~\bfnm{Jussi}\binits{J.}}
(\byear{2005}).
\btitle{Adaptive estimation of the mode of a multivariate density}.
\bjournal{J. Nonparametr. Stat.}
\bvolume{17}
\bpages{83--105}.
\bmrnumber{MR2112688 (2005m:62090a)}
\end{barticle}
\endbibitem

\bibitem[\protect\citeauthoryear{Koenker and Mizera}{2010}]{KM2010quasiconcave}
\begin{barticle}[author]
\bauthor{\bsnm{Koenker},~\bfnm{R.}\binits{R.}} \AND
  \bauthor{\bsnm{Mizera},~\bfnm{I.}\binits{I.}}
(\byear{2010}).
\btitle{Quasi-concave density estimation}.
\bjournal{Ann. Statist.}
\bvolume{38}
\bpages{2998--3027}.
\end{barticle}
\endbibitem

\bibitem[\protect\citeauthoryear{Konakov}{1973}]{MR0336874}
\begin{barticle}[author]
\bauthor{\bsnm{Konakov},~\bfnm{V.~D.}\binits{V.~D.}}
(\byear{1973}).
\btitle{The asymptotic normality of the mode of multivariate distributions}.
\bjournal{Teor. Verojatnost. i Primenen.}
\bvolume{18}
\bpages{836--842}.
\bmrnumber{0336874 (49 \#\#1647)}
\end{barticle}
\endbibitem

\bibitem[\protect\citeauthoryear{Kuchibhotla, Patra and
  Sen}{2017}]{Kuchibhotla:2017ug}
\begin{barticle}[author]
\bauthor{\bsnm{Kuchibhotla},~\bfnm{Arun~K}\binits{A.~K.}},
  \bauthor{\bsnm{Patra},~\bfnm{Rohit~K}\binits{R.~K.}} \AND
  \bauthor{\bsnm{Sen},~\bfnm{Bodhisattva}\binits{B.}}
(\byear{2017}).
\btitle{Efficient estimation in convex single index models}.
\bjournal{arXiv:1708.00145v2}.
\end{barticle}
\endbibitem

\bibitem[\protect\citeauthoryear{L{\'e}ger and Romano}{1990}]{MR1089472}
\begin{barticle}[author]
\bauthor{\bsnm{L{\'e}ger},~\bfnm{Christian}\binits{C.}} \AND
  \bauthor{\bsnm{Romano},~\bfnm{Joseph~P.}\binits{J.~P.}}
(\byear{1990}).
\btitle{Bootstrap choice of tuning parameters}.
\bjournal{Ann. Inst. Statist. Math.}
\bvolume{42}
\bpages{709--735}.
\bmrnumber{1089472}
\end{barticle}
\endbibitem

\bibitem[\protect\citeauthoryear{Mallows}{1972}]{Mallows:1972vy}
\begin{barticle}[author]
\bauthor{\bsnm{Mallows},~\bfnm{C~L}\binits{C.~L.}}
(\byear{1972}).
\btitle{A note on asymptotic joint normality}.
\bjournal{Ann. Math. Statist.}
\bvolume{43}
\bpages{508--515}.
\end{barticle}
\endbibitem

\bibitem[\protect\citeauthoryear{Mammen}{1991}]{Mammen1991convex}
\begin{barticle}[author]
\bauthor{\bsnm{Mammen},~\bfnm{E.}\binits{E.}}
(\byear{1991}).
\btitle{Nonparametric regression under qualitative smoothness assumptions}.
\bjournal{Ann. Statist.}
\bvolume{19}
\bpages{741--759}.
\end{barticle}
\endbibitem

\bibitem[\protect\citeauthoryear{Mammen and Park}{1997}]{MR1450020}
\begin{barticle}[author]
\bauthor{\bsnm{Mammen},~\bfnm{Enno}\binits{E.}} \AND
  \bauthor{\bsnm{Park},~\bfnm{Byeong~U.}\binits{B.~U.}}
(\byear{1997}).
\btitle{Optimal smoothing in adaptive location estimation}.
\bjournal{J. Statist. Plann. Inference}
\bvolume{58}
\bpages{333--348}.
\bmrnumber{1450020}
\end{barticle}
\endbibitem

\bibitem[\protect\citeauthoryear{M{\"u}ller}{1989}]{MR1015137}
\begin{barticle}[author]
\bauthor{\bsnm{M{\"u}ller},~\bfnm{Hans-Georg}\binits{H.-G.}}
(\byear{1989}).
\btitle{Adaptive nonparametric peak estimation}.
\bjournal{Ann. Statist.}
\bvolume{17}
\bpages{1053--1069}.
\bmrnumber{1015137}
\end{barticle}
\endbibitem

\bibitem[\protect\citeauthoryear{Pal, Woodroofe and Meyer}{2007}]{MR2459192}
\begin{bincollection}[author]
\bauthor{\bsnm{Pal},~\bfnm{Jayanta~Kumar}\binits{J.~K.}},
  \bauthor{\bsnm{Woodroofe},~\bfnm{Michael}\binits{M.}} \AND
  \bauthor{\bsnm{Meyer},~\bfnm{Mary}\binits{M.}}
(\byear{2007}).
\btitle{Estimating a {P}olya frequency function{${}\sb 2$}}.
In \bbooktitle{Complex datasets and inverse problems}.
\bseries{IMS Lecture Notes Monogr. Ser.}
\bvolume{54}
\bpages{239--249}.
\bpublisher{Inst. Math. Statist.}, \baddress{Beachwood, OH}.
\bmrnumber{2459192 (2010d:62087)}
\end{bincollection}
\endbibitem

\bibitem[\protect\citeauthoryear{Parzen}{1962}]{MR0143282}
\begin{barticle}[author]
\bauthor{\bsnm{Parzen},~\bfnm{Emanuel}\binits{E.}}
(\byear{1962}).
\btitle{On estimation of a probability density function and mode}.
\bjournal{Ann. Math. Statist.}
\bvolume{33}
\bpages{1065--1076}.
\bmrnumber{MR0143282 (26 \#\#841)}
\end{barticle}
\endbibitem

\bibitem[\protect\citeauthoryear{Pfanzagl}{1998}]{MR1671670}
\begin{barticle}[author]
\bauthor{\bsnm{Pfanzagl},~\bfnm{J.}\binits{J.}}
(\byear{1998}).
\btitle{The nonexistence of confidence sets for discontinuous functionals}.
\bjournal{J. Statist. Plann. Inference}
\bvolume{75}
\bpages{9--20}.
\bmrnumber{1671670}
\end{barticle}
\endbibitem

\bibitem[\protect\citeauthoryear{Pfanzagl}{2000}]{MR1747496}
\begin{barticle}[author]
\bauthor{\bsnm{Pfanzagl},~\bfnm{J.}\binits{J.}}
(\byear{2000}).
\btitle{On local uniformity for estimators and confidence limits}.
\bjournal{J. Statist. Plann. Inference}
\bvolume{84}
\bpages{27--53}.
\bmrnumber{1747496}
\end{barticle}
\endbibitem

\bibitem[\protect\citeauthoryear{{R Core Team}}{2016}]{R-core}
\begin{bmanual}[author]
\bauthor{\bsnm{{R Core Team}},~}
(\byear{2016}).
\btitle{R: A Language and Environment for Statistical Computing. R Foundation
  for Statistical Computing}, \baddress{Vienna, Austria}.
\end{bmanual}
\endbibitem

\bibitem[\protect\citeauthoryear{Rockafellar}{1970}]{MR0274683}
\begin{bbook}[author]
\bauthor{\bsnm{Rockafellar},~\bfnm{R.~Tyrrell}\binits{R.~T.}}
(\byear{1970}).
\btitle{Convex analysis}.
\bseries{Princeton Mathematical Series, No. 28}.
\bpublisher{Princeton University Press}, \baddress{Princeton, N.J.}
\bmrnumber{0274683 (43 \#\#445)}
\end{bbook}
\endbibitem

\bibitem[\protect\citeauthoryear{Romano}{1988a}]{MR964293}
\begin{barticle}[author]
\bauthor{\bsnm{Romano},~\bfnm{Joseph~P.}\binits{J.~P.}}
(\byear{1988}a).
\btitle{Bootstrapping the mode}.
\bjournal{Ann. Inst. Statist. Math.}
\bvolume{40}
\bpages{565--586}.
\bmrnumber{964293}
\end{barticle}
\endbibitem

\bibitem[\protect\citeauthoryear{Romano}{1988b}]{MR947566}
\begin{barticle}[author]
\bauthor{\bsnm{Romano},~\bfnm{Joseph~P.}\binits{J.~P.}}
(\byear{1988}b).
\btitle{On weak convergence and optimality of kernel density estimates of the
  mode}.
\bjournal{Ann. Statist.}
\bvolume{16}
\bpages{629--647}.
\bmrnumber{MR947566 (90e:62056)}
\end{barticle}
\endbibitem

\bibitem[\protect\citeauthoryear{Rufibach}{2006}]{RufibachThesis}
\begin{bphdthesis}[author]
\bauthor{\bsnm{Rufibach},~\bfnm{Kaspar}\binits{K.}}
(\byear{2006}).
\btitle{Log-concave density estimation and bump hunting for IID observations}
\btype{PhD thesis}, \bschool{Univ. Bern and Gottingen}.
\end{bphdthesis}
\endbibitem

\bibitem[\protect\citeauthoryear{Sager}{1978}]{MR491553}
\begin{barticle}[author]
\bauthor{\bsnm{Sager},~\bfnm{Thomas~W.}\binits{T.~W.}}
(\byear{1978}).
\btitle{Estimation of a multivariate mode}.
\bjournal{Ann. Statist.}
\bvolume{6}
\bpages{802--812}.
\bmrnumber{MR491553 (80f:62046)}
\end{barticle}
\endbibitem

\bibitem[\protect\citeauthoryear{Samanta}{1973}]{MR0331618}
\begin{barticle}[author]
\bauthor{\bsnm{Samanta},~\bfnm{M.}\binits{M.}}
(\byear{1973}).
\btitle{Nonparametric estimation of the mode of a multivariate density}.
\bjournal{South African Statist. J.}
\bvolume{7}
\bpages{109--117}.
\bmrnumber{0331618 (48 \#\#9950)}
\end{barticle}
\endbibitem

\bibitem[\protect\citeauthoryear{Samworth and Wand}{2010}]{MR2662359}
\begin{barticle}[author]
\bauthor{\bsnm{Samworth},~\bfnm{R.~J.}\binits{R.~J.}} \AND
  \bauthor{\bsnm{Wand},~\bfnm{M.~P.}\binits{M.~P.}}
(\byear{2010}).
\btitle{Asymptotics and optimal bandwidth selection for highest density region
  estimation}.
\bjournal{Ann. Statist.}
\bvolume{38}
\bpages{1767--1792}.
\bdoi{10.1214/09-AOS766}.
\bmrnumber{2662359}
\end{barticle}
\endbibitem

\bibitem[\protect\citeauthoryear{Saumard and Wellner}{2014}]{MR3290441}
\begin{barticle}[author]
\bauthor{\bsnm{Saumard},~\bfnm{Adrien}\binits{A.}} \AND
  \bauthor{\bsnm{Wellner},~\bfnm{Jon~A.}\binits{J.~A.}}
(\byear{2014}).
\btitle{Log-concavity and strong log-concavity: a review}.
\bjournal{Stat. Surv.}
\bvolume{8}
\bpages{45--114}.
\bdoi{10.1214/14-SS107}.
\bmrnumber{3290441}
\end{barticle}
\endbibitem

\bibitem[\protect\citeauthoryear{Sheather and Jones}{1991}]{Sheather:1991tp}
\begin{barticle}[author]
\bauthor{\bsnm{Sheather},~\bfnm{S~J}\binits{S.~J.}} \AND
  \bauthor{\bsnm{Jones},~\bfnm{M~C}\binits{M.~C.}}
(\byear{1991}).
\btitle{A reliable data-based bandwidth selection method for kernel density
  estimation}.
\bjournal{Journal of the Royal Statistical Society. Series B (Methodological)}
\bvolume{53}
\bpages{683--690}.
\end{barticle}
\endbibitem

\bibitem[\protect\citeauthoryear{Silverman}{1982}]{MR663433}
\begin{barticle}[author]
\bauthor{\bsnm{Silverman},~\bfnm{B.~W.}\binits{B.~W.}}
(\byear{1982}).
\btitle{On the estimation of a probability density function by the maximum
  penalized likelihood method}.
\bjournal{Ann. Statist.}
\bvolume{10}
\bpages{795--810}.
\bmrnumber{663433}
\end{barticle}
\endbibitem

\bibitem[\protect\citeauthoryear{Singh}{1963}]{MR0155394}
\begin{barticle}[author]
\bauthor{\bsnm{Singh},~\bfnm{Rajinder}\binits{R.}}
(\byear{1963}).
\btitle{Existence of bounded length confidence intervals}.
\bjournal{Ann. Math. Statist.}
\bvolume{34}
\bpages{1474--1485}.
\bmrnumber{0155394}
\end{barticle}
\endbibitem

\bibitem[\protect\citeauthoryear{Singh}{1968}]{MR0232487}
\begin{barticle}[author]
\bauthor{\bsnm{Singh},~\bfnm{Rajinder}\binits{R.}}
(\byear{1968}).
\btitle{Non-existence of estimates of prescribed accuracy in fixed sample
  size.}
\bjournal{Canad. Math. Bull.}
\bvolume{11}
\bpages{135--139}.
\bmrnumber{0232487}
\end{barticle}
\endbibitem

\bibitem[\protect\citeauthoryear{Tsybakov}{1990}]{MR1051586}
\begin{barticle}[author]
\bauthor{\bsnm{Tsybakov},~\bfnm{A.~B.}\binits{A.~B.}}
(\byear{1990}).
\btitle{Recursive estimation of the mode of a multidimensional distribution}.
\bjournal{Problemy Peredachi Informatsii}
\bvolume{26}
\bpages{38--45}.
\bmrnumber{MR1051586 (91j:62051)}
\end{barticle}
\endbibitem

\bibitem[\protect\citeauthoryear{Walther}{2002}]{MR1941467}
\begin{barticle}[author]
\bauthor{\bsnm{Walther},~\bfnm{Guenther}\binits{G.}}
(\byear{2002}).
\btitle{Detecting the presence of mixing with multiscale maximum likelihood}.
\bjournal{J. Amer. Statist. Assoc.}
\bvolume{97}
\bpages{508--513}.
\bmrnumber{1941467 (2003k:62152)}
\end{barticle}
\endbibitem

\bibitem[\protect\citeauthoryear{Wilks}{1938}]{wilks1938large}
\begin{barticle}[author]
\bauthor{\bsnm{Wilks},~\bfnm{Samuel~S}\binits{S.~S.}}
(\byear{1938}).
\btitle{The large-sample distribution of the likelihood ratio for testing
  composite hypotheses}.
\bjournal{Ann. Math. Statist.}
\bvolume{9}
\bpages{60--62}.
\end{barticle}
\endbibitem

\bibitem[\protect\citeauthoryear{Ziegler}{2001}]{MR1839003}
\begin{barticle}[author]
\bauthor{\bsnm{Ziegler},~\bfnm{Klaus}\binits{K.}}
(\byear{2001}).
\btitle{On bootstrapping the mode in the nonparametric regression model with
  random design}.
\bjournal{Metrika}
\bvolume{53}
\bpages{141--170 (electronic)}.
\bmrnumber{1839003}
\end{barticle}
\endbibitem

\end{thebibliography}

\end{document}